\numberwithin{equation}{section}
\numberwithin{figure}{section}
\numberwithin{table}{section}
\newtheorem{claim}{\bf \t}[part]
\newtheorem{theorem}{Theorem}[section]
\newtheorem{corollary}[theorem]{Corollary}
\newtheorem{lemma}{Lemma}[section]
\newtheorem{proposition}[theorem]{Proposition}
\newtheorem{remark}{Remark}[section]
\def\vep{\varepsilon}
\def\ep{\epsilon}
\def\pt{\partial}
\def\div{\mathrm{div}}
\def\tC{{\widetilde{C}}}
\def\ttau{\widetilde{\tau}}
\def\fT{\mathcal{T}}
\def\tfT{\widetilde{\fT}}
\def\Nhat{\widehat{N}}
\def\trho{{\widetilde{\rho}}}
\def\tu{\widetilde{u}}
\def\tbfu{\widetilde{\bfu}}
\def\mup{\mu^\prime}
\def\Ma{\mathrm{Ma}}
\def\Re{\mathrm{Re}}
\def\Pr{\mathrm{Pr}}
\def\bfx{\mathbf{x}}
\def\bfu{\mathbf{u}}
\def\bfpsi{\bm{\psi}}
\def\Du{\mathfrak{D}(\bfu)}
\def\Dtbfu{\mathfrak{D}(\tbfu)}
\def\Dpsi{\mathfrak{D}(\bfpsi)}
\def\bfW{\mathbf{W}}
\def\tbfW{\widetilde{\bfW}}
\def\md{\mathrm{d}}
\def\bfF{\mathbf{F}}
\def\bff{\mathbf{f}}
\def\bfM{\mathbf{M}}
\def\bfR{\mathbf{R}}
\def\tfH{\mathcal{H}}
\def\R{\mathbb{R}}
\begin{document}

\title[]
{Global existence for full compressible Navier-Stokes equations around the Couette flow with a temperature gradient in an infinite channel}
%{Global stability of the Couette flow for the full compressible Navier-Stoke equations in an infinite channel}
%{Global existence and low Mach number limit of full compressible Navier-Stokes equations around the Couette flow in an infinite channel}
\author{Tuowei Chen}
\address{Tuowei Chen, Department of Mathematics, Hong Kong University of Science and Technology, HongKong, P. R. China;}
\email{tuowei\_chen@163.com}
\author{Qiangchang Ju}
\address{Qiangchang Ju, Institute of Applied Physics and Computational Mathematics, 100088 Beijing, P. R. China;}
\email{ju\_qiangchang@iapcm.ac.cn}

\date{\today}

\begin{abstract}
This paper is concerned with the two-dimensional full compressible Navier-Stokes equations between two infinite parallel isothermal walls, where the upper wall is moving with a horizontal velocity, while the lower wall is stationary, and there allows a temperature difference between the two walls. It is shown that if the initial state is close to the Couette flow with a temperature gradient, then the global strong solutions exist, provided that the Reynolds and Mach numbers are low and the temperature difference between the two walls is small. The low Mach number limit of the global strong solutions is also shown for the case that both walls maintain the same temperature.
\end{abstract}
\keywords{Couette flow, full compressible Navier-Stokes equations, global stability, low Mach number limit.}
\subjclass[2020]{
	35Q30; %Stokes and Navier-Stokes equations
	35M33; %Initial-boundary value problems for mixed-type systems of PDEs
	35B40; %Asymptotic behavior of solutions to PDEs
	76N15. %Compressible Navier-Stokes equations  
}

\maketitle

%\tableofcontents
%%%%%%%%%%%%%%%%%%%%%%%%%%%%%%%%%%%%%%%%%%%%%%%%%%%%%%%%%%%%%%%%%%%%%%%%%%%%%%%%%%%%%%%%%%%%%%%%%%
\section{Introduction}
This paper is concerned with the stability of the Couette flow with a temperature gradient in a two-dimensional infinite channel. This flow describes the motion of viscous, compressible and thermally conducting gas flows between two parallel isothermal walls, where the upper wall is moving with a constant horizontal velocity, while the lower wall is stationary, and there allows a temperature difference between the two walls. The gas flow is governed by the non-dimensional full compressible Navier-Stokes equations
\begin{equation}\label{eq:2DCNS}
	\begin{split}
		&\pt_t\rho+\div(\rho \bfu)=0,\\
		&\rho(\pt_t \bfu+ \bfu\nabla\bfu)+\ep^{-2}\nabla P(\rho,\fT)= \mu\Delta \bfu+(\mu+\mup)\nabla\div \bfu,\\
		&\frac{\rho}{\gamma-1}(\pt_t \fT+ \bfu\cdot\nabla \fT)+P(\rho,\fT)\div \bfu
		=\kappa\Delta\fT+\ep^2\big(2\mu|\Du|^2+\mup(\div \bfu)^2\big),
	\end{split}
\end{equation}
in a two-dimensional infinite channel
\begin{equation}\notag
	\Omega=\R\times (0,1).  
\end{equation}
Here, $\rho(\bfx,t)$, $\bfu(\bfx,t)=(u^1(\bfx,t),u^2(\bfx,t))^\top$ and $\fT(\bfx,t)$ are unknowns and represent the gas density, velocity and absolute temperature, respectively, at time $t\geq0$, and position $\bfx=(x_1,x_2)\in\Omega$. The gas pressure $P$ is given by the equation of state for ideal gas, 
\begin{equation}\notag
P=P(\rho,\fT)=\rho\fT.
\end{equation}
The deformation tensor $\Du$ is defined as
\begin{equation}\notag
	\Du:=\frac{1}{2}\big(\nabla \bfu+ (\nabla \bfu)^\top\big).
\end{equation}
Moreover, $\ep$, $\mu$, $\mup$ and $\kappa$ are non-dimensional parameters satisfying
\begin{equation}
	\ep=\sqrt{\gamma}\Ma, \quad \mu=\frac{1}{\Re}, \quad \mu+\mup\geq0,\quad \kappa=\frac{C_p}{\Re \Pr},  \label{ReMAnew}
\end{equation}
where $\gamma>1$ is the ratio of the specific heats, $\Ma>0$ and $\Re>0$ are the Mach and Reynolds numbers based on the reference state on the upper wall, $\Pr>0$ is the Prandtl number, and $C_p>0$ is the specific heat at constant pressure. 

The boundary conditions on $\pt\Omega=\{\bfx: x_2=0\}\cup\{\bfx: x_2=1\}$ are 
\begin{equation}\label{boundarycon}
	\begin{aligned}
		&u^1(x_1,1,t)=1, \quad  u^2(x_1,1,t)=0, \quad \fT(x_1,1,t)=1,\\
		 &u^1(x_1,0,t)=0, 
		 \quad u^2(x_1,0,t)=0,
		 \quad \fT(x_1,0,t)=\chi, \quad\quad\quad  \forall x_1\in \R,\, t>0,
	\end{aligned}
\end{equation}
where $\chi$ is a given positive constant. The initial condition is
\begin{equation}
(\rho,\bfu,\fT)^\top(\bfx,0)=(\rho_0,\bfu_0,\fT_0)^\top(\bfx), \quad\quad \forall \bfx\in \Omega. \label{initialcon}
\end{equation}
We refer the reader to \cite{CJ2025} for a detailed non-dimensionalization of the full compressible Navier-Stokes equations between two parallel isothermal walls with a relative velocity. As in \cite{CJ2025}, it is further assumed that the ratio $\frac{\mup}{\mu}$ is a constant. Particularly, if Stokes' hypothesis holds (see \cite{GA1999,Papalexandris2020}), we have $\frac{\mup}{\mu}=-\frac{2}{3}$, implying that the bulk viscosity coefficient, i.e., $\mup+\frac{2}{3}\mu$, is zero.

It is easy to observe that there exists an exact solution $\tbfW$ to the system \eqref{eq:2DCNS}--\eqref{initialcon} satisfying
\begin{equation}\label{def:Couette}
	\begin{aligned}
	&\tbfW=\tbfW(x_2)=(\trho,\tbfu,\tfT)^\top(x_2),\\		&\tfT=\chi +(1-\chi)x_2-\frac{\ep^2\Pr }{2C_p}(x^2_2-x_2),
	\quad \tu^1=x_2, \quad \tu^2=0,\quad\trho=(\tfT)^{-1}.
	\end{aligned}
\end{equation}
The above exact solution $\tbfW$ is known as the Couette flow with a temperature gradient \cite{MDA2008,DEH1994}. We point out that the Couette flow $\tbfW$ has been extensively employed as a benchmark for simulations of low Mach number viscous and thermally conducting fluid flows in the context of computational fluid dynamics, see \cite{Xu2001,KW2010,CYLLL2016}. We also mention that the Couette flow for the two-dimensional incompressible or isentropic compressible Navier-Stokes equations typically assumes a constant density and has a velocity profile $\bfu_E=(x_2,0)$, see \cite{CLDZ2020,Kagei2011JMFM,HLX2025}.  

In the literature, there have been many mathematical studies on the Couette flow for the incompressible Navier-Stokes equations. Romanov \cite{Romanov1973} first proved that the Couette flow is linearly stable for all Reynolds numbers. The transition threshold for the Couette flow in a finite channel was studied in \cite{CDZ2024,CLDZ2020}. More recent progress on the incompressible Couette flow can be found in \cite{MZ2020,WZZ2018,WZZ2019,WZ2021} and the references therein.

For the Couette flow governed by the isentropic compressible Navier-Stokes equations, Iooss and Padula \cite{IP1997} investigated the linear stability of stationary parallel flows in a cylindrical domain. Kagei \cite{Kagei2011JMFM} proved the asymptotic stability of the Couette flow in an infinite layer when the Reynolds and Mach numbers are small. Later, Kagei \cite{Kagei2011JDE,Kagei2012} extended this result to general parallel flows. Li and Zhang \cite{LZ2017} studied the stability of the Couette flow in an infinite layer with Navier-slip boundary conditions at the bottom boundary. Readers are referred to \cite{KN2015,KN2019,KT2020} for further results on the Poiseuille flow and the Couette flow between two concentric cylinders.  Recently, the linear stability for the Couette flow in isentropic compressible fluids at high Reynolds number was investigated in \cite{ADM2021,ZZZ2022}. Huang, Li and Xu \cite{HLX2025} studied the nonlinear stability of the two-dimensional compressible Couette flow at high Reynolds numbers in an infinitely long flat torus $\mathbb{T}\times\R$.

However, there is significantly less literature concerning the mathematical study of the Couette flow for the full compressible Naiver-Stokes equations. The Couette flow $\tbfW$ includes variations in density and temperature, being more complex than the one governed by the incompressible or isentropic compressible Navier-Stokes equations, which has a constant density. For the Couette flow $\tbfW$ with small initial perturbations, the present authors \cite{CJ2025} proved the global existence of strong solutions to the equation \eqref{eq:2DCNS} between two finite parallel walls with horizontal periodic boundary conditions, provided that the Reynolds and Mach numbers are low. In the same paper \cite{CJ2025}, the present authors also studied the low Mach number limit of the resulting global strong solutions. We refer the reader to \cite{Bessaih1995,FJX2024,Ou2009,JO2011,JO2022,MRS2022,OY2022} for studies on the low Mach number limit of the compressible Navier-Stokes equations. It is particularly interesting and more difficult to study the global stability of the Couette flow between two infinite parallel walls with inhomogeneous Dirichlet boundary conditions, which is the main concern of the present paper.

In this paper, we aim to prove the global existence of strong solutions to the system \eqref{eq:2DCNS}--\eqref{initialcon} around the Couette flow with a temperature gradient, i.e., $\tbfW$ in \eqref{def:Couette}. We also study the low Mach number limit of the resulting global strong solutions.

Before stating the main theorems, we introduce the notations used in this paper. \\
$\bullet\,\,\,L^q(\Omega)$: The standard Lebesgue space over $\Omega$ with the norm $\| \cdot \|_{L^{q}}\,\,(1\leq q\leq \infty)$.\\
$\bullet\,\,\,H^l(\Omega)$: The usual $L^2$-Sobolev space over $\Omega$ of integer order $l$ with the norm $\| \cdot \|_{H^{l}}\,\,(l\geq 0)$.\\
$\bullet\,\,\,C([0,T];H^l(\Omega))$: The space of continuous functions on an interval $[0,T]$ with values in $H^l(\Omega)$.\\
The function spaces $L^2([0,T];H^1(\Omega))$ and $L^\infty([0,T];H^2(\Omega))$ can be defined similarly.

We denote by $\pt_{1}$ and $\pt_{2}$ the operators $\pt_{x_1}$ and $\pt_{x_2}$, respectively. For a function $f(\bfx)$ on $\Omega$, we adopt the simplified notation
\begin{equation}\notag
	\int f \md \bfx:=\int_{\Omega} f \md \bfx.
\end{equation} Moreover, the perturbation functions are defined as
\begin{equation}\label{def-0}
	(\varphi(t),\bfpsi(t),\theta(t))^\top:=(\rho(t)-\trho,\bfu(t)-\tbfu,\fT(t)-\tfT)^\top,
	\quad \forall t\geq0.  
\end{equation}

The main theorem of this paper is stated as follows.
\begin{theorem} \label{main1}	
	Suppose that $\chi$, the non-dimensional temperature on the lower wall, satisfies
	\begin{equation} \label{chi}
		|1-\chi|=O(\ep). 
	\end{equation}
	Suppose that the initial perturbation $(\varphi_0,\bfpsi_0,\theta_0):=(\varphi(0),\bfpsi(0),\theta(0))$ satisfies
	\begin{equation} \label{compatibilitycon}
			\varphi_0\in H^3(\Omega),\quad \bfpsi_0\in  H^1_0(\Omega)\cap H^3(\Omega),
			\quad \theta_0\in  H^1_0(\Omega)\cap H^3(\Omega).
	\end{equation}
	We further assume that the initial perturbation satisfies the compatibility condition
	\begin{equation}\label{compatibilitycon1}
		\pt_{t}\bfpsi(0)=0,\quad  
		\pt_{t}\theta(0)=0, \quad\quad\text{on}\quad\pt\Omega.	 
	\end{equation}
	Then, there exist positive constants $\Re^\prime$, $\vep^\prime$ and $C_0$ such that if
	\begin{equation}
		\Re<\Re^\prime, \quad \ep<\vep^\prime,  \label{con2}
	\end{equation}
	and if
	\begin{equation}
		\begin{aligned}
			&\|\bfpsi(0)\|^2_{L^2}+\ep^{2}\|\nabla\bfpsi(0)\|^2_{L^2}
			+\ep^{4}\|(\nabla^2\bfpsi,\pt_{t}\bfpsi)(0)\|^2_{L^2}
			+\ep^{6}\|(\nabla^3\bfpsi,\nabla\pt_{t}\bfpsi)(0)\|^2_{L^2}\leq C_0\ep^2,\\
			&\|\varphi(0)\|^2_{L^2}+\ep^{2}\|\nabla\varphi(0)\|^2_{L^2}
			+\ep^{4}\|(\nabla^2\varphi,\pt_{t}\varphi)(0)\|^2_{L^2}
			+\ep^{6}\|\nabla^3\varphi(0)\|^2_{L^2}\leq C_0\ep^4,\\
			&\|\theta(0)\|^2_{L^2}+\ep^{2}\|\nabla\theta(0)\|^2_{L^2}
			+\ep^{4}\|(\nabla^2\theta,\pt_{t}\theta)(0)\|^2_{L^2}
			+\ep^{6}\|(\nabla^3\theta,\nabla\pt_{t}\theta)(0)\|^2_{L^2}\leq C_0\ep^4,
		\end{aligned}  \label{con3}
	\end{equation}
	then there exists a unique global strong solution $(\rho,\bfu,\fT)$ to \eqref{eq:2DCNS}--\eqref{initialcon} satisfying 
	\begin{equation}
		\begin{aligned}
			&\varphi\in C\big([0,\infty);H^3(\Omega)\big),
			\quad \bfpsi\in C\big([0,\infty);H^3(\Omega)\big)\cap L^2\big([0,\infty);H^4(\Omega)\big),\\
			&\theta\in C\big([0,\infty);H^3(\Omega)\big)\cap L^2\big([0,\infty);H^4(\Omega)\big),
		\end{aligned}   \label{regularity}
	\end{equation}
	and
	\begin{equation}
		\begin{aligned}
			&\sup_{t\in [0,\infty)}\big(\ep^{-1}\|\varphi(t)\|_{L^2}+\|\bfpsi(t)\|_{L^2}
			+\ep^{-1}\|\theta(t)\|_{L^2}\big)\leq \widehat{C}\ep,\\
			&\sup_{t\in [0,\infty)}\big(\ep^{-1}\|\nabla\varphi(t)\|_{L^2}
			+\|\nabla\bfpsi(t)\|_{L^2}+\ep^{-1}\|\nabla\theta(t)\|_{L^2}\big)	\leq \widehat{C}.
		\end{aligned}  \label{uniform}
	\end{equation}
	Here, $\widehat{C}$ is a positive constant independent of $\ep$. In addition, we have
	\begin{equation}
	\|\big(\bfu(t)-\tbfu,\fT(t)-\tfT\big)\|_{L^\infty}\rightarrow 0 
		\quad\quad \text{as} \quad t\rightarrow \infty. \label{largetime}
	\end{equation}
\end{theorem}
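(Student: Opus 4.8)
The plan is to substitute \eqref{def-0} into \eqref{eq:2DCNS} and use that the Couette profile \eqref{def:Couette} is itself an exact solution; this yields, for $(\varphi,\bfpsi,\theta)$, a system of the schematic form
\begin{equation*}
\begin{aligned}
&\pt_t\varphi+\tu^1\pt_1\varphi+\trho\,\div \bfpsi+\bfpsi\cdot\nabla\trho=-\div(\varphi\bfpsi),\\
&\trho\big(\pt_t\bfpsi+\tu^1\pt_1\bfpsi\big)+\trho\,\psi^2\,\pt_2\tbfu+\ep^{-2}\nabla\big(\tfT\varphi+\trho\theta\big)-\mu\Delta\bfpsi-(\mu+\mup)\nabla\div \bfpsi=\bfF,\\
&\tfrac{\trho}{\gamma-1}\big(\pt_t\theta+\tu^1\pt_1\theta\big)+\tfrac{\trho}{\gamma-1}\,\psi^2\,\pt_2\tfT+\div \bfpsi-\kappa\Delta\theta=\mathcal{G},
\end{aligned}
\end{equation*}
where $\bfF$ and $\mathcal{G}$ gather the quadratic and higher-order nonlinearities, including the $O(\ep^2)$ viscous-heating terms. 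Its structural features are decisive: transport by the shear $\tu^1=x_2$ with no dissipation in $x_1$; the lift-up coupling $\trho\,\psi^2\,\pt_2\tbfu=\trho\,\psi^2(1,0)^\top$; the singular low-Mach pressure $\ep^{-2}\nabla(\tfT\varphi+\trho\theta)$; $x_2$-dependent background coefficients whose gradients $\pt_2\trho,\pt_2\tfT$ are $O(\ep)$ by \eqref{chi} and \eqref{def:Couette}; and, since $\tbfW$ already realizes the boundary data \eqref{boundarycon}, the perturbations vanish on $\pt\Omega$, so integrations by parts in $x_2$ are boundary-term free. For fixed $\Re$ and $\ep$ the system is symmetrizable hyperbolic–parabolic, so the standard linearize-and-iterate construction, using \eqref{compatibilitycon} and the compatibility condition \eqref{compatibilitycon1}, gives a unique local strong solution on some $[0,T_*]$ in the class \eqref{regularity}.

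\textbf{The $\ep$-weighted a priori estimates.} Let $\mathcal{E}(t)$ be the energy built from the time-dependent analogues of the left-hand sides of \eqref{con3}, i.e. $\|\bfpsi\|_{L^2}^2+\ep^{-2}\|(\varphi,\theta)\|_{L^2}^2$ together with the $\ep$-weighted $\nabla,\nabla^2,\pt_t,\nabla^3,\nabla\pt_t$ contributions, and let $\mathcal{D}(t)$ be the associated dissipation (the full gradients of $\bfpsi$ and $\theta$ at each order, plus the weaker damping available for $\varphi$). The goal is the differential inequality
\begin{equation*}
\frac{\md}{\md t}\mathcal{E}(t)+\mathcal{D}(t)\le C\big(\Re+\ep+\sqrt{\mathcal{E}(t)}\,\big)\,\mathcal{D}(t),
\end{equation*}
proved level by level. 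At the $L^2$ level, test the momentum equation by $\bfpsi$ and the temperature equation by $\theta$; the singular pressure pairs as $\ep^{-2}\int\nabla(\tfT\varphi+\trho\theta)\cdot\bfpsi$, and this is cancelled — up to $O(\ep)$ remainders from $\pt_2\trho,\pt_2\tfT$ — against the continuity and temperature equations multiplied by suitable $\ep^{-2}$-weights, using the algebraic identity $P-\trho\tfT=\tfT\varphi+\trho\theta+\varphi\theta$. The lift-up term $\int\trho\,\psi^2\psi^1$ is the delicate point: there is no Poincaré inequality in $x_1$ to bound $\psi^1$ by its gradient, so I would follow the mechanism of \cite{Kagei2011JMFM,CJ2025} — use the wall Dirichlet condition to gain $\|\psi^2\|_{L^2}\le C\|\pt_2\psi^2\|_{L^2}$, exploit that $\div \bfpsi$ (hence $\pt_2\psi^2$) is $O(\ep)$-controlled in the low-Mach regime, and use $|1-\chi|=O(\ep)$ to keep the forcing produced by $\pt_2\tbfW$ absorbable into $\ep\,\mathcal{D}$. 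Higher-order estimates follow by differentiating in $x_1$, $x_2$ and $t$: the parabolic terms control $\nabla\bfpsi,\nabla\theta$ at each level, while $\varphi$ is recovered through the continuity equation (effective viscous flux, or $\pt_t$ of continuity), each order costing one power of $\ep$ — which is exactly why the weights in \eqref{con3}–\eqref{uniform} are arranged as they are. The shear transport $\tu^1\pt_1(\cdot)$ is skew-symmetric at leading order since $\tu^1$ is $x_1$-independent, and the commutator $[\pt_2,\tu^1\pt_1]=\pt_1$ only regenerates terms already present in $\mathcal{D}$.

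\textbf{Global existence and large-time behavior.} Choosing $\Re<\Re'$, $\ep<\vep'$ and $C_0$ small enough, the differential inequality integrates to $\mathcal{E}(t)+\int_0^t\mathcal{D}(s)\,\md s\le C\mathcal{E}(0)\le CC_0\ep^2$, strictly improving the bootstrap hypothesis \eqref{con3}; hence the local solution continues to all $t\geq0$, yielding \eqref{regularity} and the uniform bounds \eqref{uniform}. Since $\int_0^\infty\mathcal{D}<\infty$ and $\pt_t\big(\|\nabla\bfpsi\|_{L^2}^2+\|\nabla\theta\|_{L^2}^2\big)$ is bounded by $\mathcal{D}$ plus lower-order terms (so $t\mapsto\|\nabla\bfpsi(t)\|_{L^2}^2+\|\nabla\theta(t)\|_{L^2}^2$ is uniformly continuous), one gets $\|\nabla\bfpsi(t)\|_{L^2}+\|\nabla\theta(t)\|_{L^2}\to0$, and likewise $\|\bfpsi(t)\|_{L^2}+\|\theta(t)\|_{L^2}\to0$ from the integrability of the lower-order dissipation. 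Combined with the uniform $H^2$-bound, the two-dimensional interpolation $\|f\|_{L^\infty}\le C\|f\|_{L^2}^{1/2}\|\nabla^2 f\|_{L^2}^{1/2}+C\|f\|_{L^2}$ applied to $\bfpsi=\bfu-\tbfu$ and $\theta=\fT-\tfT$ gives \eqref{largetime}.

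\textbf{Main obstacle.} The heart of the argument — and the step I expect to be hardest — is closing these estimates \emph{uniformly in $\ep$} in the unbounded channel while the Couette shear is active: the lift-up term $\trho\,\psi^2\psi^1$ and the transport $x_2\pt_1$ interact with the whole $\ep$-weighted hierarchy, and because no Poincaré inequality is available in $x_1$, $\psi^1$ cannot be treated perturbatively. One must rely on the exact cancellation structure of the low-Mach pressure, the wall Dirichlet conditions, and the simultaneous $O(\ep)$-smallness of the Mach number and of $|1-\chi|$ to keep every dangerous contribution inside the factor $\big(\Re+\ep+\sqrt{\mathcal{E}}\,\big)$ multiplying $\mathcal{D}$; propagating this through the third-order and mixed space-time estimates, with the $\ep$-loss bookkeeping for $\varphi$, is where the bulk of the work lies.
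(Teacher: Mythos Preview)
Your overall architecture (local existence, $\ep$-weighted a priori estimates, continuation, decay via integrable dissipation plus interpolation) matches the paper. But the heart of the argument is misidentified, and the mechanism you propose for closing it will not work as stated.

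The paper singles out a different obstacle from the one you emphasize. The lift-up/drift term coming from $\tbfu$ is handled at the basic $L^2$ level not by the cancellation you sketch but by the \emph{relative entropy} functional $\eta$ of Dafermos (Lemma~\ref{basic}); this is what produces a clean inequality with the prefactor involving $\Re$ and $\ep$, and is the reason $\Re$ must be small. The genuinely new difficulty, which your proposal does not address, is obtaining time-integrated control of $\nabla\varphi$: because $\tfT$ is non-constant, the pressure gradient contains the variable-coefficient term $\nabla\tfT\,\varphi$, and the Matsumura--Nishida energy method does not directly give damping for $\nabla\varphi$. In the finite-channel case \cite{CJ2025} this was rescued by a zero-mass condition and Poincar\'e for $\varphi$; here there is no Poincar\'e for $\varphi$ in the infinite strip, so your suggestion to ``follow the mechanism of \cite{Kagei2011JMFM,CJ2025}'' and to treat $\varphi$ via the continuity equation/effective viscous flux is exactly the route the paper says fails.

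What the paper actually does for $\varphi$ is two new ingredients you are missing. First, for the normal direction it does not estimate $\pt_2\varphi$ directly but the combination $G_1:=\tfT\,\pt_2\varphi+\varphi\,\pt_2\tfT=\pt_2(\tfT\varphi)$; a weighted energy argument on the $x_2$-component of the momentum equation (Lemma~\ref{lemma3}) yields $\ep^{-4}\int_0^t\|G_1\|_{L^2}^2$, and the point of pairing $\pt_2\varphi$ with $\varphi\pt_2\tfT$ is precisely to absorb the variable-coefficient pressure term. Second, for the tangential direction the paper rewrites $\eqref{eq}_2$ as a steady inhomogeneous Stokes system \eqref{Stokes2} and invokes the Stokes estimate in the infinite channel (Lemma~\ref{Stokes lemma}) to extract $\ep^{-4}\|\nabla P\|_{L^2}^2$, from which $\ep^{-4}\int_0^t\|\pt_1\varphi\|_{L^2}^2$ follows (Lemma~\ref{lemma4}). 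These two pieces, not an effective-viscous-flux argument, are what close the $A_2$-level estimate and then propagate to $A_4,A_5$. Your ``main obstacle'' paragraph should be rewritten around the absence of Poincar\'e for $\varphi$ and these two devices; the claim that $\div\bfpsi$ is ``$O(\ep)$-controlled in the low-Mach regime'' is not available a priori and plays no role in the actual proof.
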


\begin{remark}
	It follows from \eqref{def:Couette} and \eqref{def-0} that $\pt_{t}\bfpsi(0)=\pt_{t}\bfu(0)$. The notation $\pt_{t}\bfu(0)$ is defined by taking $t = 0$ in $\eqref{eq}_2$, that is,
	\begin{equation}
		\rho_0(\pt_t \bfu(0)+ \bfu_0\cdot\nabla \bfu_0)+\ep^{-2}\nabla P(\rho_0,\fT_0)= \mu\Delta \bfu_0+(\mu+\mup)\nabla\div \bfu_0.
	\end{equation}
	It is indeed a compatibility condition. The notations $\pt_{t}\varphi(0)$ and $\pt_{t}\theta(0)$ are defined in a similar way. 
\end{remark}

\begin{remark}
	From the proof below, the constant $\ep^\prime$ indeed depends on the Reynolds number $\Re$ in \Cref{main1}. Moreover, the constant $C_0$ depends on $\Re$, but is independent of $\ep$.  
\end{remark}

In the absence of a temperature difference between the upper and lower walls, i.e., it holds that
\begin{equation}
	\chi=1,  \label{same}
\end{equation}
we state the result on the low Mach number limit of the global strong solutions obtained in \Cref{main1} as follows. 
\begin{theorem} \label{main2}
	Suppose that \eqref{same} holds, and suppose that the conditions \eqref{compatibilitycon}--\eqref{con3} hold as in \Cref{main1}. Denote by $(\rho^\ep,\bfu^\ep,\fT^\ep)$ the unique global strong solution obtained from \Cref{main1}. Then, it holds that
	\begin{equation}
		\|\rho^\ep-1\|_{L^2}=O(\ep^2),\quad	\|\bfu^\ep-\tbfu\|_{L^2}=O(\ep),\quad	\|\fT^\ep-1\|_{L^2}=O(\ep^2). \label{converge0}
	\end{equation}
	Note that $(\tbfu,p_*)$ with $p_*:= P(1,1)$ is indeed the incompressible Couette flow being a exact solution to the steady incompressible Navier-Stokes system,
	\begin{equation}
		\left.\begin{cases}
			\div \bfu=0,    \quad\quad\text{in}\quad \Omega,  \\
			\bfu\nabla \bfu+\nabla p = \mu\Delta \bfu,\quad\quad\text{in}\quad \Omega,\\
			\bfu|_{x_2=1}=(1,0)^\top,\\
			\bfu|_{x_2=0}=0.
		\end{cases}\label{2DINS}
		\right.
	\end{equation} 
	Thus, the global strong solution $(\rho^\ep,\bfu^\ep,\fT^\ep)$ strongly converges to $(1,\tbfu,1)$, i.e., the incompressible Couette flow, in the $L^2$ space as $\ep\rightarrow0$.
\end{theorem}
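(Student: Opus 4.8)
The plan is to deduce the three estimates \eqref{converge0} — and hence the claimed strong $L^2$ convergence of $(\rho^\ep,\bfu^\ep,\fT^\ep)$ to the incompressible Couette flow — directly from the uniform-in-$\ep$ bounds \eqref{uniform} already established in \Cref{main1}; once $\chi=1$ is imposed, no further analysis of the system is needed, only a triangle-inequality comparison of the Couette profile $\tbfW$ with the constant state $(1,\bfu_E,1)$, where $\bfu_E=(x_2,0)^\top$.

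First I would examine $\tbfW$ when $\chi=1$. Setting $\chi=1$ in \eqref{def:Couette} gives $\tfT=1-\tfrac{\ep^2\Pr}{2C_p}(x_2^2-x_2)$, so $\tfT-1=O(\ep^2)$ uniformly on $\overline{\Omega}$; since $\tfT$ is then bounded away from $0$ for $\ep$ small and $\trho=(\tfT)^{-1}$, also $\trho-1=O(\ep^2)$ uniformly. Crucially, $\tu^1=x_2$ and $\tu^2=0$ do not depend on $\ep$, so $\tbfu\equiv\bfu_E$ exactly. A direct substitution then confirms that $(\tbfu,p_*)$ with $p_*=P(1,1)$ solves the steady incompressible system \eqref{2DINS}: $\div\tbfu=\pt_1 x_2=0$, $(\tbfu\cdot\nabla)\tbfu=x_2\,\pt_1(x_2,0)^\top=0$, $\nabla p_*=0$ and $\mu\Delta\tbfu=0$, while the boundary values are $(1,0)^\top$ on $x_2=1$ and $0$ on $x_2=0$.

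Next I would invoke \Cref{main1} under the present hypotheses \eqref{compatibilitycon}--\eqref{con3} and \eqref{con2}: the global strong solution $(\rho^\ep,\bfu^\ep,\fT^\ep)$ exists and the bounds \eqref{uniform} hold with $\widehat{C}$ independent of $\ep$. By \eqref{def-0} this reads $\sup_{t\ge0}\|\rho^\ep-\trho\|_{L^2}\le\widehat{C}\ep^2$, $\sup_{t\ge0}\|\bfu^\ep-\tbfu\|_{L^2}\le\widehat{C}\ep$ and $\sup_{t\ge0}\|\fT^\ep-\tfT\|_{L^2}\le\widehat{C}\ep^2$. Combining with the first step via the triangle inequality gives $\|\rho^\ep-1\|_{L^2}\le\|\rho^\ep-\trho\|_{L^2}+\|\trho-1\|=O(\ep^2)$, $\|\fT^\ep-1\|_{L^2}\le\|\fT^\ep-\tfT\|_{L^2}+\|\tfT-1\|=O(\ep^2)$ and $\|\bfu^\ep-\tbfu\|_{L^2}=O(\ep)$, which is \eqref{converge0}; letting $\ep\to0$ yields the stated strong $L^2$ convergence to $(1,\tbfu,1)$.

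Since the analytic work is all contained in \Cref{main1}, there is no serious obstacle here; the content is simply that the $\ep$-weights in \eqref{uniform} are calibrated so that the density and temperature perturbations sit at order $\ep^2$ while the velocity perturbation is only order $\ep$, and that the temperature-gradient correction in $\tbfW$ is itself $O(\ep^2)$ once $\chi=1$. One small point of bookkeeping: since $\trho-1$ and $\tfT-1$ are functions of $x_2$ alone, the terms $\|\trho-1\|$ and $\|\tfT-1\|$ above are to be read on the cross-section — equivalently, it is the uniform bound $\|\trho-1\|_{L^\infty}+\|\tfT-1\|_{L^\infty}=O(\ep^2)$ that enters — consistently with how the norms are interpreted on the unbounded channel $\Omega$.
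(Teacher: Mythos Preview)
Your proof is correct and follows exactly the same route as the paper's: invoke the uniform bounds \eqref{uniform} from \Cref{main1}, observe that $\trho-1$ and $\tfT-1$ are $O(\ep^2)$ when $\chi=1$, and conclude by the triangle inequality. Your closing remark about reading $\|\trho-1\|$ and $\|\tfT-1\|$ in $L^\infty$ (or on the cross-section) is well taken, since these functions of $x_2$ alone do not lie in $L^2(\Omega)$ on the infinite channel; the paper's own proof writes $\|\trho-1\|_{L^2}=O(\ep^2)$ without commenting on this point.
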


Now we make some comments on the analysis of this paper. Different from the problems concerning with small initial perturbations around a constant state \cite{MN1980,MN1983} or the Couette flow of isentropic compressible fluids \cite{Kagei2011JMFM}, the present work considers a background solution with variations in both density and temperature, see \eqref{def:Couette}. The temperature gradient of the Couette flow $\tbfW$ yields a perturbation system with a variable coefficient term of $\varphi$, i.e., $\nabla\tfT\varphi$, which is derived from the gradient term of the pressure, see \eqref{eq} below. Due to the presence of the term $\nabla\tfT\varphi$, the Matsumura-Nishida type energy method used in \cite{MN1980,MN1983,Kagei2011JMFM} can not be directly employed to obtain the time-decay estimate for $\nabla \varphi$. In our previous work \cite{CJ2025} on the Couette flow in a finite channel, such difficulty has been partially overcome by introducing a zero mass perturbation condition and using the Poincar\'{e} inequality to obtain a $L^2H^3$-type estimate for $\varphi$. However, for the present case in an infinite channel, it can not be expected that the Poincar\'{e} inequality holds for $\varphi$ in the unbounded domain $\Omega$. Thus, the approach used in \cite{CJ2025} can not be applied here. To address this issue, we establish a $L^2L^2$-type estimate for the term $\tfT\pt_2\varphi+\varphi\pt_2\tfT$ by using a weighted energy method. Then, we show the $L^2L^2$-type estimate for $\pt_{1}\varphi$ by studying a steady inhomogeneous Stokes system between two infinite parallel walls. These time-decay estimates for the terms $\tfT\pt_2\varphi+\varphi\pt_2\tfT$ and $\pt_{1}\varphi$ play a key role in establishing the higher order uniform \textit{a priori} estimates. Moreover, to overcome the drift effect terms from the non-zero background velocity $\tbfu$, we employ the relative entropy method \cite{Dafermos1979} to establish the basic energy estimate, provided that the Reynolds number is sufficiently low. In addition, to study the low Mach number limit of the strong solutions, we construct a $\ep$-weighted energy functional to derive uniform bounds independent of $\ep$.

\begin{remark}
	The approach developed in the present paper can be applied to the case for the Couette flow $\tbfW$ in a finite channel, removing the zero mass perturbation condition in the previous work \cite{CJ2025}, which requires the initial mass being equal to the mass of the Couette flow $\tbfW$. In addition, the results in \Cref{main1,main2} can be extended to the case in three space dimensions by slight modifications. 
\end{remark}

The rest of this paper is organized as follows. In Section 2, we collect some elementary inequalities and introduce the system for perturbations. In Section 3, we show the uniform \textit{a priori} estimates by using a $\ep$-weighted energy method. Finally, the main theorems are proved in Section 4.

\section{Preliminaries}
In this section we first recall some known facts and elementary inequalities that will be used later. Later in the section, we give the system of equations for perturbations and show the local existence and uniqueness of strong solutions to the initial-boundary value problem of the resulting system.

\subsection{Some elementary inequalities}
We first recall the following Poincar\'{e} type inequality on an infinite channel $\Omega$.  
\begin{lemma} \label{Poincareineq2}
	For $f\in W^{1,p}_0(\Omega)$, $1\leq p\leq\infty$, it holds that
	\begin{equation}
		\|f\|_{L^q(\Omega)}\leq \frac{h}{2}\|\nabla f\|_{L^q(\Omega)}.
	\end{equation}
\end{lemma}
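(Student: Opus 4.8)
\medskip

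\noindent\emph{Proof proposal.} The plan is to reduce the claimed bound to the classical one-dimensional Poincar\'{e} inequality on the cross-section $(0,h)$ (here $h$ denotes the width of the channel, so $h=1$ for $\Omega=\R\times(0,1)$), applied slice by slice, and then to integrate in the unbounded horizontal variable $x_1$. Since, for $p<\infty$, $W^{1,p}_0(\Omega)$ is by definition the closure of $C^\infty_c(\Omega)$ in the $W^{1,p}$-norm, I would first prove the inequality for $f\in C^\infty_c(\Omega)$ and then pass to the limit using that $\|f_n\|_{L^p}\to\|f\|_{L^p}$ and $\|\nabla f_n\|_{L^p}\to\|\nabla f\|_{L^p}$; for the endpoint $p=\infty$ one instead works directly with a Lipschitz representative of $f$ vanishing on $\pt\Omega$ and applies the fundamental theorem of calculus along vertical segments.

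For fixed $x_1\in\R$, set $g(s):=f(x_1,s)$ for $s\in[0,h]$, so that $g(0)=g(h)=0$. The core one-dimensional estimate is $\|g\|_{L^p(0,h)}\le \tfrac{h}{2}\|g'\|_{L^p(0,h)}$, which I would establish by splitting $(0,h)=(0,h/2)\cup(h/2,h)$. On $(0,h/2)$, since $g(0)=0$, H\"older's inequality gives $|g(s)|=\big|\int_0^s g'(\sigma)\,\md\sigma\big|\le s^{1-1/p}\|g'\|_{L^p(0,h/2)}$, hence $\int_0^{h/2}|g(s)|^p\,\md s\le \|g'\|^p_{L^p(0,h/2)}\int_0^{h/2}s^{p-1}\,\md s=\tfrac1p\big(\tfrac h2\big)^p\|g'\|^p_{L^p(0,h/2)}$; as $(1/p)^{1/p}\le 1$ for $p\ge 1$, this yields $\|g\|_{L^p(0,h/2)}\le \tfrac h2\|g'\|_{L^p(0,h/2)}$. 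Using $g(h)=0$ gives the mirror inequality on $(h/2,h)$, and adding the $p$-th powers of the two halves yields the claim. For $p=\infty$ the same computation gives the sharper pointwise bound $|g(s)|\le \min\{s,h-s\}\,\|g'\|_{L^\infty(0,h)}\le \tfrac h2\|g'\|_{L^\infty(0,h)}$.

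To conclude, I would note that $|g'(s)|=|\pt_2 f(x_1,s)|\le|\nabla f(x_1,s)|$ pointwise, so the one-dimensional estimate reads $\|f(x_1,\cdot)\|_{L^p(0,h)}\le \tfrac h2\|\nabla f(x_1,\cdot)\|_{L^p(0,h)}$; raising this to the $p$-th power, integrating over $x_1\in\R$ (or taking the supremum over $x_1$ when $p=\infty$), and invoking Fubini's theorem gives $\|f\|_{L^p(\Omega)}\le\tfrac h2\|\nabla f\|_{L^p(\Omega)}$. I do not expect a genuine obstacle in this argument; the only two points requiring a little care are obtaining the factor $\tfrac h2$ rather than the cruder constant $h$ — which is precisely why the slice is split at its midpoint — and the functional-analytic technicalities (density of $C^\infty_c$ for $p<\infty$, and the reduction to a Lipschitz representative with vanishing trace when $p=\infty$).

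\medskip
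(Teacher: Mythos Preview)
Your argument is correct: the slice-by-slice reduction to the one-dimensional Poincar\'{e} inequality, the midpoint splitting to secure the constant $h/2$, the additivity of the $p$-th powers on the two halves, and the Fubini step all go through as you describe, with the endpoint cases $p=1$ and $p=\infty$ handled cleanly.

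The paper takes a different route: its proof consists solely of a reference to Theorem~II.5.1 in Galdi's monograph, with no argument given. Your approach is therefore strictly more informative --- it is self-contained, elementary, and makes transparent exactly where the constant $h/2$ (rather than $h$) comes from, namely the use of \emph{both} boundary conditions $g(0)=g(h)=0$ via the midpoint split. The paper's approach buys brevity and delegates the verification to a standard reference; yours buys a complete proof that a reader can check on the spot without consulting external sources.
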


\begin{proof}
	See Theorem II.5.1 in \cite{Galdi2011}.
\end{proof}
The following is the Gagliardo-Nirenberg inequality which will be used frequently.
\begin{lemma}\label{GN lemma}
	For $p\in[2,\infty)$, there exists a generic positive constant $C$ which may depend on $p$ and $\Omega$, such that for any $f\in H^2(\Omega)$, we have
	\begin{equation}
		\begin{aligned}
			&\|f\|_{L^p}    
			\leq C\|f\|^{\frac{2}{p}}_{L^2}\|\nabla f\|^{1-\frac{2}{p}}_{L^2}+C\|f\|_{L^2},\\
			&\|f\|_{L^\infty}\leq C\|f\|^{\frac{1}{2}}_{L^2}
			\|\nabla^2 f\|^{\frac{1}{2}}_{L^{2}}+C\|f\|_{L^2}.
		\end{aligned}
	\end{equation}
\end{lemma}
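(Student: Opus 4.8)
\medskip
\noindent\textbf{Proof proposal.}
The plan is to deduce both inequalities from their classical counterparts on the whole plane $\R^2$ by means of a bounded extension operator adapted to the flat‑slab geometry $\Omega=\R\times(0,1)$. Concretely, I would first construct a linear operator $E$ sending functions on $\Omega$ to functions on $\R^2$ with $Ef=f$ on $\Omega$, $\mathrm{supp}\,Ef\subset\R\times(-1,2)$, and
\begin{equation*}
\|Ef\|_{L^2(\R^2)}\le C\|f\|_{L^2(\Omega)},\qquad \|\nabla^k Ef\|_{L^2(\R^2)}\le C\sum_{j=0}^{k}\|\nabla^j f\|_{L^2(\Omega)}\quad(k=1,2).
\end{equation*}
Such an $E$ is obtained by a higher‑order (Seeley‑type) reflection toward each of the two flat walls $x_2=0$ and $x_2=1$ — for instance of the form $(Ef)(x_1,x_2)=-3f(x_1,-x_2)+4f(x_1,-x_2/2)$ for $x_2<0$ near the lower wall, chosen so that $Ef$ and $\pt_2 Ef$ match across $\{x_2=0\}$ — followed by multiplication by a fixed smooth cutoff in $x_2$ equal to $1$ on $(0,1)$ and vanishing outside $(-1,2)$. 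The reflection is a local operation with bounded coefficients, hence bounded on $L^2$ and, via the product rule for the cutoff, on $H^1$ and $H^2$; the derivative falling on the cutoff is exactly what generates the lower‑order terms above. Thus the non‑scale‑invariance of $\Omega$ (bounded in $x_2$, unbounded in $x_1$) is absorbed entirely into these cutoff‑induced terms, and this is the mechanism producing the additive summand $C\|f\|_{L^2}$ in both conclusions.

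With $E$ at hand, the first inequality follows from the classical Gagliardo--Nirenberg inequality on $\R^2$,
\begin{equation*}
\|g\|_{L^p(\R^2)}\le C_p\,\|\nabla g\|_{L^2(\R^2)}^{1-\frac2p}\|g\|_{L^2(\R^2)}^{\frac2p},\qquad p\in[2,\infty),
\end{equation*}
applied to $g=Ef$: restricting back to $\Omega$ and invoking the bounds on $E$ gives $\|f\|_{L^p(\Omega)}\le C_p\big(\|\nabla f\|_{L^2(\Omega)}+\|f\|_{L^2(\Omega)}\big)^{1-\frac2p}\|f\|_{L^2(\Omega)}^{\frac2p}$, and splitting the base of the power via $(a+b)^\alpha\le a^\alpha+b^\alpha$ for $\alpha\in[0,1]$ yields the stated form. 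For the second inequality I would instead use the $2$D estimate $\|g\|_{L^\infty(\R^2)}\le C\|\nabla^2 g\|_{L^2}^{1/2}\|g\|_{L^2}^{1/2}$ — proved by frequency splitting, $\|\widehat g\|_{L^1}\le CR\|g\|_{L^2}+CR^{-1}\|\nabla^2 g\|_{L^2}$, optimized in $R$ — again applied to $g=Ef$. Transferring back to $\Omega$ gives $\|f\|_{L^\infty}\le C\|f\|_{L^2}^{1/2}\big(\|\nabla^2 f\|_{L^2}+\|\nabla f\|_{L^2}+\|f\|_{L^2}\big)^{1/2}$, and the only residual term is $\|f\|_{L^2}^{1/2}\|\nabla f\|_{L^2}^{1/2}$, which is bounded by $C\|f\|_{L^2}^{1/2}\|\nabla^2 f\|_{L^2}^{1/2}+C\|f\|_{L^2}$ via the interpolation inequality $\|\nabla f\|_{L^2}^2\le C\|f\|_{L^2}\|\nabla^2 f\|_{L^2}+C\|f\|_{L^2}^2$ (itself a consequence of $E$ plus Cauchy--Schwarz in Fourier variables) together with Young's inequality.

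Since both ingredients are classical, there is no genuine obstacle; the point that requires care is the boundary. Because $f$ carries no homogeneous boundary condition on $\pt\Omega$, one cannot reflect purely evenly or oddly and remain in $H^2$, so the higher‑order reflection is essential, and one must check that it is bounded \emph{separately} on $L^2$ and on $\dot H^2$ (not merely on $H^2$), so that the additive term in the conclusion has exactly the stated structure rather than a spurious $\|\nabla f\|_{L^2}$. A fully self‑contained alternative avoiding extension operators is to prove the case $p=4$ directly by a Ladyzhenskaya‑type argument — from $f(x_1,x_2)^2\le\int_0^1 f(x_1,y)^2\,\md y+2\int_0^1|f\,\pt_2 f|(x_1,y)\,\md y$ (obtained by averaging the fundamental theorem of calculus in $x_2$ over the unit interval) and $f(x_1,x_2)^2\le 2\int_\R|f\,\pt_1 f|(y,x_2)\,\md y$ in $x_1$, multiplying and integrating — and then reaching general $p\in[2,\infty)$ by applying this bound to a suitable power $|f|^{s}$ and using Hölder's inequality; here the $x_2$-average is precisely what substitutes for the missing boundary values and again produces the additive $\|f\|_{L^2}$ term.
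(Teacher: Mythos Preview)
Your proposal is correct, but it is worth noting that the paper does not actually give a proof: the entire argument there is the single line ``See \cite{Nirenberg1959}.'' In other words, the authors treat the lemma as a standard fact and defer entirely to the literature, whereas you supply a genuine proof strategy via a Seeley-type extension operator adapted to the flat slab, followed by the classical Gagliardo--Nirenberg inequalities on $\R^2$ and an interpolation step to absorb the residual $\|\nabla f\|_{L^2}$ term. Your approach is sound and self-contained; the only comparison to make is that you are doing strictly more work than the paper, which simply outsources the result. The alternative Ladyzhenskaya-type direct argument you sketch for the $L^p$ case is also valid and has the minor advantage of avoiding extension operators altogether, at the cost of a slightly more hands-on computation.
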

\begin{proof}
	See \cite{Nirenberg1959}.
\end{proof}

Next, we consider the steady inhomogeneous Stokes system on $\Omega$,
\begin{equation}\label{eq:Stokes}
	\left.
	\begin{cases}
		\div \bfu=g\quad\quad\mathrm{in} \,\,\Omega,\\
		-\mu\Delta \bfu+\nabla p=\bfF \quad\quad\mathrm{in} \,\,\Omega,\\
		\bfu=0 \quad\quad\mathrm{on}\,\,\pt\Omega,\\
		\bfu\rightarrow 0\quad\quad \mathrm{as} \,\,|\bfx|\rightarrow \infty,
	\end{cases}
	\right.
\end{equation}
where $g$ and $\bfF$ are known functions. We have the following lemma.  
\begin{lemma}\label{Stokes lemma}
	Assume that $\bfF\in L^2(\Omega)$ and $g\in H^{1}(\Omega)$. Let $(\bfu,p)$ be a smooth solution to the boundary value problem \eqref{eq:Stokes} and suppose that $\bfu\in H^2(\Omega)$ and $\nabla p\in L^2(\Omega)$. Then it holds that
	\begin{equation} \label{ineq:Stokes}
		\|\nabla^2\bfu\|_{H^2}+\|\nabla p\|_{L^2}
		\leq C\big(\| \bfF\|_{L^2}+\| g\|_{H^1}\big),
	\end{equation}
	where $C$ is a positive constant depending only on $\Omega$ and $\mu$.
\end{lemma}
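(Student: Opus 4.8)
The plan is to prove the a priori estimate \eqref{ineq:Stokes} for the steady inhomogeneous Stokes system by the standard strategy of reducing to the homogeneous (divergence-free) case via a Bogovskii-type lifting, then invoking classical $L^2$ elliptic regularity for the Stokes operator on the infinite slab. First I would use the hypotheses $g\in H^1(\Omega)$ and the compatibility that $g$ be of the right form (note $\int_\Omega g\,\md\bfx$ need not vanish here since $\Omega$ is unbounded, but the construction works locally/by a partition argument) to construct a vector field $\bfv\in H^2(\Omega)$ with $\bfv|_{\pt\Omega}=0$, $\bfv\to 0$ at infinity, $\div\bfv=g$, and the bound $\|\bfv\|_{H^2}\le C\|g\|_{H^1}$. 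On the infinite channel one way to do this is to exploit the product structure $\Omega=\R\times(0,1)$: take a Fourier transform in $x_1$ and solve the divergence equation fiberwise on $(0,1)$ with homogeneous Dirichlet data, with constants uniform in the frequency; alternatively one cites a known Bogovskii estimate on a slab (e.g. the construction in Galdi's book). Then set $\bfw:=\bfu-\bfv$, which satisfies a homogeneous-divergence Stokes system $\div\bfw=0$, $-\mu\Delta\bfw+\nabla p=\bfF+\mu\Delta\bfv=:\widetilde{\bfF}\in L^2(\Omega)$, $\bfw|_{\pt\Omega}=0$, $\bfw\to 0$ at infinity.

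Next I would derive the basic energy identity for $\bfw$: multiplying the momentum equation by $\bfw$ and integrating by parts over $\Omega$ (the decay at infinity and the Dirichlet condition kill the boundary terms, and $\int \nabla p\cdot\bfw=-\int p\,\div\bfw=0$) gives $\mu\|\nabla\bfw\|_{L^2}^2=\int\widetilde{\bfF}\cdot\bfw\,\md\bfx$; combining with the Poincar\'e inequality from \Cref{Poincareineq2} (with $h=1$ here) yields $\|\bfw\|_{H^1}\le C\|\widetilde{\bfF}\|_{L^2}\le C(\|\bfF\|_{L^2}+\|g\|_{H^1})$. For the pressure, I would use that $\nabla p=\mu\Delta\bfw+\widetilde{\bfF}$ is controlled in $H^{-1}$ by $\|\nabla\bfw\|_{L^2}+\|\widetilde{\bfF}\|_{L^2}$, and then apply the Ne\v cas (negative-norm) inequality on $\Omega$ — valid on the slab since the Bogovskii operator is bounded there — to upgrade to $\|\nabla p\|_{L^2}\le C\|\nabla p\|_{H^{-1}}+\cdots$; more directly, once second derivatives of $\bfw$ are controlled (next step) the identity $\nabla p=\mu\Delta\bfw+\widetilde{\bfF}$ gives $\|\nabla p\|_{L^2}$ immediately. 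For the $H^2$ (indeed the stated $\|\nabla^2\bfw\|_{H^2}$, i.e. up to fourth derivatives of $\bfw$ — here I would read the left side of \eqref{ineq:Stokes} as the natural elliptic-regularity gain of two derivatives over the data, so $\|\bfw\|_{H^2}$, and reconcile with the notation used in the paper) regularity of $\bfw$, I would use the Cattabriga/Solonnikov $L^2$ theory for the stationary Stokes system on the slab: interior estimates plus boundary estimates near the flat pieces $x_2=0,1$ (flattened charts are trivial here), summed over a locally finite cover of $\R$ in the $x_1$-direction with uniform constants, give $\|\nabla^2\bfw\|_{L^2}+\|\nabla p\|_{L^2}\le C(\|\widetilde{\bfF}\|_{L^2}+\|\div\bfw\|_{H^1})=C\|\widetilde{\bfF}\|_{L^2}$. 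Unwinding $\widetilde{\bfF}=\bfF+\mu\Delta\bfv$ and $\|\Delta\bfv\|_{L^2}\le\|\bfv\|_{H^2}\le C\|g\|_{H^1}$, and adding back $\bfv$, yields $\|\nabla^2\bfu\|_{L^2}+\|\nabla p\|_{L^2}\le C(\|\bfF\|_{L^2}+\|g\|_{H^1})$, which is \eqref{ineq:Stokes}.

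I expect the main obstacle to be handling the unboundedness of $\Omega$ cleanly: the usual Bogovskii construction and the Ne\v cas inequality require either a bounded domain or a careful slab-adapted argument, and the compatibility condition $\int g=0$ that one invokes on bounded domains is not available here. The cleanest fix is the Fourier-in-$x_1$ reduction, which turns the whole problem into a one-dimensional Dirichlet problem on $(0,1)$ with parameter $\xi$, where all constants are manifestly uniform; one then needs only to check that the fiberwise solution operators assemble into bounded operators on the $L^2$-Sobolev scale, which is routine via Plancherel. The secondary point to be careful about is the precise meaning of the left-hand side of \eqref{ineq:Stokes} (the paper writes $\|\nabla^2\bfu\|_{H^2}$, which would require $\bfF,g$ to be two derivatives smoother than assumed); I would either interpret it as $\|\bfu\|_{H^2}$ and note the typo, or, if higher regularity of the data is implicitly available from the context where this lemma is applied, bootstrap the elliptic estimate two more times. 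Everything else — the energy identity, Poincar\'e, and reassembling the pieces — is standard and follows the lines of the bounded-domain Stokes theory.
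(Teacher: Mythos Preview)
Your approach is correct but takes a genuinely different route from the paper. The paper does not construct a Bogovskii lift or run any Fourier/fiberwise analysis; instead it exploits the fact that the lemma is stated as an \emph{a priori} estimate with $\bfu\in H^2(\Omega)$ and $\nabla p\in L^2(\Omega)$ already assumed. It exhausts $\Omega$ by bounded subdomains $\Omega_i$ with smooth boundary, applies the classical bounded-domain inhomogeneous Stokes estimate (from Galdi) to the restriction $(\bfu|_{\Omega_i},p|_{\Omega_i})$, which produces the desired right-hand side plus an extra boundary term $\|\bfu\|_{H^{3/2}(\pt\Omega_i\setminus\pt\Omega)}$, and then observes via the trace theorem that this term is dominated by $\|\bfu\|_{H^2(\Omega\setminus\Omega_i)}\to 0$ as $i\to\infty$. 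Letting $i\to\infty$ gives \eqref{ineq:Stokes} directly, with no Bogovskii operator, no Ne\v{c}as inequality, and no compatibility issue for $g$.

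The trade-off is clear: the paper's argument is much shorter and sidesteps every unbounded-domain technicality you flagged, but it relies essentially on the assumed regularity of $(\bfu,p)$ to kill the artificial boundary contribution; it would not give existence. Your approach is self-contained and would also yield existence on the slab, at the cost of building the Bogovskii lift and the slab-adapted elliptic estimates. Your reading of the left-hand side as $\|\bfu\|_{H^2}$ (equivalently $\|\nabla^2\bfu\|_{L^2}$) is consistent with how the paper applies the lemma later.
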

\begin{proof}
	Let $\Omega_i$, for $i=1,2,3,\dots$, be a sequence of bounded open subsets of $\Omega$, each having a smooth boundary and satisfying
	\begin{equation}\notag
		\Omega_i\subset\Omega_{i+1},\quad\bigcup_{i=1}^{\infty}\Omega_i=\Omega,\quad
		(\pt\Omega_i\setminus\pt\Omega)\cap B_i=\emptyset,
	\end{equation}
	where $B_{i}=\{\bfx\in\Omega:|\bfx|>i\}$. For example, $\Omega_{i}$ can be obtained by making slight modifications to the open set $\Omega\cap B_{i+1}$ to ensure that the resulting set has a smooth boundary.
	
	Then, for $i=1,2,3,\dots$, let
	\begin{equation}\notag
		\bfu_{i}(\bfx):=\bfu|_{\Omega_{i}}(\bfx), \quad p_{i}(\bfx):=p|_{\Omega_{i}}(\bfx),\quad\quad \forall \bfx\in\Omega_{i}.
	\end{equation}
	It is observed that $(\bfu_{i},p_{i})$ is a solution to the boundary value problem
	\begin{equation}\label{eq:Stokes-i}
		\left.
		\begin{cases}
			\div \bfu_{i}=g|_{\Omega_{i}}
			\quad\quad\mathrm{in} \,\,\Omega_{i},\\
			-\mu\Delta \bfu_{i}+\nabla p_{i}=\bfF|_{\Omega_{i}} \quad\quad\mathrm{in} \,\,\Omega_{i},\\
			\bfu_{i}=0 \quad\quad\mathrm{on}\,\,\pt\Omega_{i}\cap\pt\Omega,\\
			\bfu_{i}=\bfu
			\quad\quad\mathrm{on}\,\,\pt\Omega_{i}\setminus\pt\Omega,
		\end{cases}
		\right.
	\end{equation}
	where
	\begin{equation}\notag
		g_{i}:=g|_{\Omega_{i}},\quad 
		\bfF_{i}:=\bfF|_{\Omega_{i}}.
	\end{equation}
	By the elliptic theory for the inhomogeneous Stokes system in a bounded domain (see the Chapters four in \cite{Galdi2011}), it holds that
	\begin{equation}\label{ineq:Stokes-i}
	\|\nabla^2\bfu_{i}\|_{L^2(\Omega_{i})}
	+\|\nabla p_{i}\|_{L^2(\Omega_{i})}
	\leq C\big(\| \bfF_{i}\|_{L^2(\Omega_{i})}
	+\| g_{i}\|_{H^1(\Omega_{i})}
	+\|\bfu\|_{H^{\frac{3}{2}}(\pt\Omega_{i}\setminus\pt\Omega)}\big),
\end{equation}
	where $C$ is a positive constant independent of $i$.
	
	It follows from the trace theorem (c.f.\cite{Evans2010}) that
	\begin{equation}\notag
		\|\bfu\|_{H^{\frac{3}{2}}(\pt\Omega_{i}\setminus\pt\Omega)}
		\leq C\|\bfu\|_{H^2(\Omega\setminus\Omega_{i})},
	\end{equation}
	where $C$ is a positive constant independent of $i$. This, together with the fact that
	\begin{equation}\notag
		\lim\limits_{i\rightarrow\infty}\|\bfu\|_{H^2(\Omega\setminus\Omega_{i})}=0,
	\end{equation}
	gives
	\begin{equation}\notag
		\lim\limits_{i\rightarrow\infty}
		\|\bfu\|_{H^{\frac{3}{2}}(\pt\Omega_{i}\setminus\pt\Omega)}=0.
	\end{equation}
	
	Based on the above equation, letting $i\rightarrow\infty$ on both sides of the inequality \eqref{ineq:Stokes-i} implies \eqref{ineq:Stokes}. This finishes the proof.
\end{proof}

Furthermore,  we recall the classical elliptic theory for the Lam\'{e} system.
\begin{lemma}\label{Lame lemma}
	Let $\bfu$ be a smooth solution solving the problem
	\begin{equation}
		\left.
		\begin{cases}
			\mu\Delta \bfu+(\mu+\mup)\nabla\div 
			\bfu=\bfF\quad\quad\mathrm{in}\,\,\Omega,\\
				\bfu=0 \quad\quad\mathrm{on}\,\,\pt\Omega,\\
			\bfu\rightarrow 0\quad\quad \mathrm{as} \,\,|\bfx|\rightarrow \infty.
		\end{cases} \label{lamesys}
		\right.
	\end{equation} 
	Then, for $p\in[2,\infty)$ and a integer $k\geq 0$, there exists a positive constant $C$ depending only on $\mu$, $\mup$, $p$, $k$ and $\Omega$ such that the following estimates hold:
	
	\noindent(1) if $\bfF\in L^p(\Omega)$, then
	\begin{equation}
		\|\bfu\|_{W^{k+2,p}}\leq C\|\bfF\|_{W^{k,p}};
	\end{equation}
	\noindent(2) if $\bfF=\div \bff$ with $\bff=(\bff^{ij})_{3\times 3}$, $\bff^{ij}\in W^{k,p}(\Omega)$, then
	\begin{equation}
		\|\bfu\|_{W^{k+1,p}}\leq C\|\bfF\|_{W^{k,p}}.
	\end{equation}
\end{lemma}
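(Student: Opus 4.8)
The plan is to mirror the exhaustion argument used in the proof of \Cref{Stokes lemma}, transferring the estimate on the unbounded channel $\Omega$ from classical elliptic estimates on bounded smooth subdomains, and to use the Poincar\'e inequality of \Cref{Poincareineq2} to absorb the lower-order terms. I first record the structural fact that makes this work: the Lam\'e operator $\mathcal{L}\bfu:=\mu\Delta\bfu+(\mu+\mup)\nabla\div\bfu$ is strongly elliptic, because its principal symbol paired with $\eta\in\R^2$ equals $-\mu|\xi|^2|\eta|^2-(\mu+\mup)(\xi\cdot\eta)^2\le-\mu|\xi|^2|\eta|^2$ (recall $\mu>0$ and $\mu+\mup\ge0$), and the Dirichlet condition is complementing for $\mathcal{L}$. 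Hence the Agmon--Douglis--Nirenberg theory applies on any bounded domain with $C^{k+2}$ boundary and gives the $W^{k+2,p}$ a priori estimate for \eqref{lamesys}, as well as the divergence-form $W^{k+1,p}$ estimate when the right-hand side is a divergence.

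For part (1), take a sequence of bounded smooth open sets $\Omega_i\uparrow\Omega$ with $(\pt\Omega_i\setminus\pt\Omega)\cap B_i=\emptyset$ as in the proof of \Cref{Stokes lemma}, set $\bfu_i:=\bfu|_{\Omega_i}$, $\bfF_i:=\bfF|_{\Omega_i}$, and observe that $\bfu_i$ solves $\mathcal{L}\bfu_i=\bfF_i$ in $\Omega_i$ with $\bfu_i=0$ on $\pt\Omega_i\cap\pt\Omega$ and $\bfu_i=\bfu$ on $\pt\Omega_i\setminus\pt\Omega$. The bounded-domain estimate then yields
\[
\|\bfu_i\|_{W^{k+2,p}(\Omega_i)}\le C\Big(\|\bfF_i\|_{W^{k,p}(\Omega_i)}+\|\bfu\|_{W^{k+2-\frac1p,p}(\pt\Omega_i\setminus\pt\Omega)}+\|\bfu_i\|_{L^p(\Omega_i)}\Big),
\]
and, exactly as in \Cref{Stokes lemma}, $C$ can be taken independent of $i$: the $\Omega_i$ are channels of fixed width whose boundaries have a uniform $C^{k+2}$-character, so a covering by balls of a fixed radius together with fixed-size boundary collars reduces the estimate to finitely many model configurations (interior balls, half-space patches at a flat wall) with a universal constant, and summing the local estimates gives the uniform bound. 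The lower-order term is handled by \Cref{Poincareineq2}: since $\bfu$ vanishes on $\pt\Omega$ and at infinity, $\bfu\in W^{1,p}_0(\Omega)$, so $\|\bfu_i\|_{L^p(\Omega_i)}\le\|\bfu\|_{L^p(\Omega)}\le C\|\nabla\bfu\|_{L^p(\Omega)}$, and combining the Poincar\'e inequality with the scale-invariant interpolation $\|\nabla\bfu\|_{L^p}\le C\|\bfu\|_{L^p}^{1/2}\|\nabla^2\bfu\|_{L^p}^{1/2}$ on the channel lets one control all intermediate norms by $\|\nabla^{k+2}\bfu\|_{L^p}$, i.e.\ upgrade the top-order seminorm to the full $W^{k+2,p}$-norm on the left.

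For the boundary term, the trace theorem gives $\|\bfu\|_{W^{k+2-1/p,p}(\pt\Omega_i\setminus\pt\Omega)}\le C\|\bfu\|_{W^{k+2,p}(\Omega\setminus\Omega_i)}$ with $C$ independent of $i$; since $\bfu\in W^{k+2,p}(\Omega)$, this tends to $0$ as $i\to\infty$. Letting $i\to\infty$ in the displayed inequality (using $\bigcup_i\Omega_i=\Omega$) yields $\|\bfu\|_{W^{k+2,p}(\Omega)}\le C\|\bfF\|_{W^{k,p}(\Omega)}$, which is (1). Part (2) follows by repeating the argument with the divergence-form estimate in place of the $W^{k+2,p}$ estimate: for $\bfF_i=\div\bff|_{\Omega_i}$ one has $\|\bfu_i\|_{W^{k+1,p}(\Omega_i)}\le C\big(\|\bff\|_{W^{k,p}(\Omega_i)}+\|\bfu\|_{W^{k+1-1/p,p}(\pt\Omega_i\setminus\pt\Omega)}+\|\bfu_i\|_{L^p(\Omega_i)}\big)$, and the same passage to the limit gives $\|\bfu\|_{W^{k+1,p}(\Omega)}\le C\|\bff\|_{W^{k,p}(\Omega)}$.

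The step I expect to be the main obstacle is establishing that the constant in the bounded-domain elliptic estimate stays bounded as $i\to\infty$, since $\mathrm{diam}(\Omega_i)\to\infty$. This is precisely where the flat, translation-invariant geometry of the channel is essential: one must verify that the Agmon--Douglis--Nirenberg constant depends only on the fixed channel width, the uniform regularity of $\pt\Omega_i$, and the fixed parameters $\mu,\mup,p,k$, via the localization argument sketched above. The remaining technicalities---absorbing the $L^p$ lower-order term and recovering the full $W^{k+2,p}$ norm from the top-order seminorm---are comparatively routine, relying only on \Cref{Poincareineq2} and standard interpolation on $\Omega$.
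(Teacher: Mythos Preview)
Your argument is sound, but it is considerably more elaborate than what the paper does: the paper's entire proof of this lemma is a one-line citation of the Agmon--Douglis--Nirenberg $L^p$-estimates \cite{ADN1959,ADN1964}, with no exhaustion argument, no discussion of the unbounded domain, and no mention of lower-order terms. In effect the paper treats the channel estimate as a direct consequence of ADN theory, whereas you actually supply the missing reduction from the unbounded channel to bounded subdomains, mirroring the proof of \Cref{Stokes lemma}. Your route is therefore more self-contained and more honest about the fact that the classical ADN results are stated for bounded domains (or half-spaces); the localization/translation-invariance argument you sketch for the uniformity of the constant in $i$ is exactly what is needed to justify the citation rigorously. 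What the paper's approach buys is brevity; what yours buys is a genuine proof rather than an appeal to folklore.
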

\begin{proof}
	See the standard $L^p$-estimates for the Agmon-Douglis-Nirenberg systems \cite{ADN1959,ADN1964}.
\end{proof}

\subsection{System of equations for the perturbation}
Recalling the notation \eqref{def-0}, we rewrite the system \eqref{eq:2DCNS} as
\begin{equation} \label{eq}
		\begin{split}
		&\pt_t\varphi+\bfu\cdot\nabla\varphi+\rho\div\bfpsi+\bfpsi\cdot\nabla\trho=0,  \\
		&\rho\big(\pt_t\bfpsi+\bfu\nabla \bfpsi+\bfpsi\nabla\tbfu\big)
		+\ep^{-2}\nabla P(\rho,\fT)-\mu\Delta\bfpsi-(\mu+\mup)\nabla\div\bfpsi=0,\\
		&\frac{1}{\gamma-1}\rho\big(\pt_t\theta+ \bfu\cdot\nabla\theta+\bfpsi\cdot\nabla\tfT\big)
		+ P(\rho,\fT)\div\bfpsi-\kappa\Delta\theta\\
		&= \ep^2\big(2\mu\Dpsi:\Dpsi+4\mu\Dtbfu:\Dpsi+\mup(\div\bfpsi)^2\big).
	\end{split}
\end{equation}

We consider the system \eqref{eq} under boundary conditions
\begin{equation}
	\bfpsi=0,\quad  
\theta=0, \quad\quad\text{on}\quad\pt\Omega,	  \label{eq4}
\end{equation}
and the initial condition
\begin{equation}
	(\varphi,\bfpsi,\theta)^\top(0)=(\varphi_0,\bfpsi_0,\theta_0)^\top.  \label{eq5}
\end{equation}

We state the local existence result as follows.
\begin{proposition}[Local existence and uniqueness]\label{local}
	Suppose that \eqref{compatibilitycon}--\eqref{compatibilitycon1} hold. Then, there exists a small time $T$ such that there exists a local unique strong solution $(\varphi,\bfpsi,\theta)$ to \eqref{eq}--\eqref{eq5} on $\Omega\times[0,T]$ satisfying
	\begin{equation}
		\varphi\in C([0,T];H^3(\Omega)),\,\,\,\,\,
		\bfpsi,\theta\in C([0,T];H^3(\Omega))\cap L^2(0,T;H^4(\Omega)).\label{local0}
	\end{equation}
\end{proposition}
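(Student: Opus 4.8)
The plan is to establish \Cref{local} by a standard linearization-plus-fixed-point argument, adapting the classical local well-posedness theory for the full compressible Navier--Stokes system (as in \cite{MN1980,MN1983,CJ2025}) to the present perturbation system \eqref{eq}--\eqref{eq5} in the unbounded channel $\Omega$. First I would reformulate \eqref{eq} as a symmetric-hyperbolic--parabolic coupled system in the unknown $(\varphi,\bfpsi,\theta)$: the $\varphi$-equation is a transport equation along $\bfu=\tbfu+\bfpsi$, while the $\bfpsi$- and $\theta$-equations are parabolic once $\rho=\trho+\varphi$ is treated as a known coefficient (note $\rho$ stays bounded away from $0$ and $\infty$ for $(\varphi,\bfpsi,\theta)$ small in $H^3$, since $\trho=(\tfT)^{-1}$ is smooth, bounded, and bounded below on $\bar\Omega$ by \eqref{def:Couette} once $|1-\chi|$ and $\ep$ are small). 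I would set up an iteration scheme: given the $n$-th iterate $(\varphi^n,\bfpsi^n,\theta^n)$, define $(\varphi^{n+1},\bfpsi^{n+1},\theta^{n+1})$ by solving the linear transport equation for $\varphi^{n+1}$ (coefficients depending on $\bfu^n=\tbfu+\bfpsi^n$) and the linear parabolic systems for $\bfpsi^{n+1}$ and $\theta^{n+1}$ with the Dirichlet conditions \eqref{eq4} and initial data \eqref{eq5}.

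The key steps, in order, are: (i) solve the linear transport equation for $\varphi^{n+1}$ by the method of characteristics, obtaining $\varphi^{n+1}\in C([0,T];H^3)$ with a bound of the form $\|\varphi^{n+1}(t)\|_{H^3}\le \|\varphi_0\|_{H^3}\exp(C\int_0^t\|\bfu^n\|_{H^4}\,ds)$ plus forcing contributions; (ii) solve the linear parabolic problems for $\bfpsi^{n+1}$ and $\theta^{n+1}$ using standard $L^2$-parabolic regularity for second-order systems on $\Omega$ with homogeneous Dirichlet data (the Lam\'e-type and Laplace operators appearing in $\eqref{eq}_2$--$\eqref{eq}_3$), which gives $C([0,T];H^3)\cap L^2(0,T;H^4)$ bounds in terms of the data and the right-hand sides; here the compatibility conditions \eqref{compatibilitycon}--\eqref{compatibilitycon1} are exactly what is needed for the $H^3$-in-time-trace of the parabolic solutions to be consistent at $t=0$; (iii) close the iteration on a small time interval $[0,T]$ by showing the map is a self-map of a suitable ball in the energy space $X_T:=\{\varphi\in C([0,T];H^3),\ \bfpsi,\theta\in C([0,T];H^3)\cap L^2(0,T;H^4)\}$, exploiting that the quadratic nonlinearities ($\bfpsi\cdot\nabla\trho$, $\rho\div\bfpsi$, $\bfu\nabla\bfpsi$, $\ep^{-2}\nabla P(\rho,\fT)$ expanded as $\ep^{-2}(\nabla\tfT\,\varphi+\trho\nabla\theta+\dots)$, the viscous-dissipation terms $\ep^2\mathfrak D(\bfpsi):\mathfrak D(\bfpsi)$, etc.) are controlled by products of $H^3$-norms via the Gagliardo--Nirenberg inequality of \Cref{GN lemma} and the algebra property of $H^2(\Omega)\hookrightarrow L^\infty(\Omega)$ in two dimensions, with a factor of $T^{\alpha}$ ($\alpha>0$) that can be absorbed; (iv) prove the sequence is Cauchy in the lower-order norm $C([0,T];L^2)\cap (\bfpsi,\theta\in L^2(0,T;H^1))$ by energy estimates on the differences, and upgrade the limit to the claimed regularity by weak-$*$ compactness and uniqueness; (v) prove uniqueness directly by the same difference-energy estimate and Gr\"onwall.

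The main obstacle is the unbounded domain $\Omega=\R\times(0,1)$: unlike the finite-channel case in \cite{CJ2025}, one cannot use the Poincar\'e inequality on $\varphi$ or compact Sobolev embeddings, and decay at $|\bfx|\to\infty$ must be tracked carefully. Concretely, the linear parabolic regularity for $\bfpsi^{n+1},\theta^{n+1}$ must be established on $\Omega$ with the correct far-field behavior --- this can be done exactly as in the proof of \Cref{Stokes lemma}, by exhausting $\Omega$ by bounded smooth subdomains $\Omega_i$, applying interior-plus-boundary $L^p$ (here $L^2$) estimates for Agmon--Douglis--Nirenberg systems as in \Cref{Lame lemma}, and passing to the limit using the trace estimate and the decay of $\bfu$ (equivalently $\bfpsi$) in $H^2(\Omega\setminus\Omega_i)$; the $i$-independent constants are guaranteed because the channel has a translation-invariant flat geometry. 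The transport estimate in step (i) poses no extra difficulty on the unbounded domain since characteristics stay in $\Omega$ (the normal velocity $u^2=\psi^2$ vanishes on $\pt\Omega$) and the estimate is purely local in $\bfx$. Since all constants in the nonlinear estimates depend only on $\Omega$, $\mu$, $\mup$, $\kappa$, $\gamma$, $\ep$ and the size of the iterates, the fixed-point argument closes on a time $T$ depending only on $\|(\varphi_0,\bfpsi_0,\theta_0)\|_{H^3}$, which is all that is asserted. I expect the verification that the difference map is a contraction in the low-order norm (step iv), where one loses one derivative on the transport part and must balance it against the parabolic smoothing, to require the most care, but it is routine given the two-dimensional Sobolev embeddings.
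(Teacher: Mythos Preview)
Your proposal is correct and follows essentially the same approach as the paper: the paper's own proof simply states that the result follows from the linearization technique, classical energy method and Banach fixed-point argument as in \cite{MN1980,MN1983}, and omits the details. Your sketch is a faithful (and considerably more detailed) execution of exactly that strategy, including the necessary adaptation to the unbounded channel via exhaustion by bounded domains.
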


\begin{proof}
	The proof of this proposition can be done by using the linearization technique, classical energy method and Banach fixed point argument as in \cite{MN1980,MN1983}. For simplicity, the details are omitted here.  
\end{proof}

\section{Uniform estimates}
In this section, we derive the uniform estimates (in time) for smooth solutions to the initial-boundary value problem \eqref{eq}--\eqref{eq5}.

In the following of this section, we fix a smooth solution $(\varphi,\bfpsi,\theta)$ to \eqref{eq}--\eqref{eq5} on $\Omega\times[0,T]$ for a time $T>0$, and assume that the conditions \eqref{chi}--\eqref{compatibilitycon1} hold. For $t\in[0,T]$, we define 
\begin{equation}\label{def:A0123}
	\begin{aligned}
		A_0(t):=&\sup_{s\in [0,t]}\big(\|\bfpsi(s)\|^2_{L^2}+\ep^{-2}\|\varphi(s)\|^2_{L^2}+\ep^{-2}\|\theta(s)\|^2_{L^2}\big)\\
		&+\int_{0}^{t}\big(\|\bfpsi(s)\|^2_{H^1}+\ep^{-2}\|\theta(s)\|^2_{H^1}\big)\md s,\\
		A_1(t):=&\sup_{s\in [0,t]}\big(\|\nabla\bfpsi(s)\|^2_{L^2}+\ep^{-2}\|\nabla\theta(s)\|^2_{L^2}  \big)
		+\int_{0}^{t}\big(\|\pt_t\bfpsi\|^2_{L^2}+\ep^{-2}\|\pt_t\theta\|^2_{L^2}\big)\md s,\\
		A_2(t):=&\ep^{-2}\sup_{s\in [0,t]}\|\nabla\varphi(s)\|^2_{L^2}
		+\int_{0}^{t}\big(\ep^{-4}\|\pt_1\varphi\|^2_{L^2}
		+\ep^{-4}\|\tfT\pt_2\varphi+\varphi\pt_2\tfT\|^2_{L^2}\big)\md s\\
		&+\int_{0}^{t}\big(\ep^{-2}\|\pt_{t}\varphi\|^2_{L^2}
		+\|\bfpsi(s)\|^2_{H^2}+\ep^{-2}\|\theta(s)\|^2_{H^2}\big)\md s,\\
		A_3(t):=&\sup_{s\in[0,t]}\big(\ep^{-2}\|\pt_{t}\varphi(s)\|^2_{L^2}
		+\|\pt_{t}\bfpsi(s)\|^2_{L^2}+\ep^{-2}\|\pt_{t}\theta(s)\|^2_{L^2}  \big)\\
		&+\int_{0}^{t}\big(\|\nabla\pt_{t}\bfpsi(s)\|^2_{L^2}
		+\ep^{-2}\|\nabla\pt_{t}\theta(s)\|^2_{L^2}\big)\md s,
	\end{aligned}
\end{equation}
and
\begin{equation}\label{def:A45}
	\begin{aligned}
		A_4(t):=&\sup_{s\in[0,t]}\big(\ep^{-2}\|\nabla^2\varphi(s)\|^2_{L^2}
		+\|\nabla^2\bfpsi(s)\|^2_{L^2}+\ep^{-2}\|\nabla^2\theta(s)\|^2_{L^2}  \big)\\
		&+\int_{0}^t\big(\ep^{-4}\|\nabla\pt_{1}\varphi(s)\|^2_{L^2}
		+\ep^{-4}\|\pt_{2}(\tfT\pt_2\varphi+\varphi\pt_2\tfT)\|^2_{L^2}\big)\md s\\
		&+\int_{0}^t\big(\|\bfpsi(s)\|^2_{H^3}
		+\ep^{-2}\|\theta(s)\|^2_{H^3}\big)\md s,\\
		A_5(t):=&\sup_{s\in[0,t]}\big(\ep^{-2}\|\nabla^3\varphi(s)\|^2_{L^2}
		+\|(\nabla^3\bfpsi,\nabla\pt_{t}\bfpsi)(s)\|^2_{L^2}+\ep^{-2}\|(\nabla^3\theta,\nabla\pt_{t}\theta)(s)\|^2_{L^2} \big)\\
		&+\int_{0}^{t}\big( \|\pt^2_{t}\bfpsi(s)\|^2_{L^2}+\ep^{-2}\|\pt^2_{t}\theta(s)\|^2_{L^2}
		+\|\bfpsi(s)\|^2_{H^4}
		+\ep^{-2}\|\theta(s)\|^2_{H^4}\big)\md s.
	\end{aligned}
\end{equation}
We also define
\begin{equation}\label{def:N}
	N(t):=\ep^{-2}A_0(t)+A_1(t)+A_2(t)+\ep^{2}A_3(t)+\ep^{2}A_4(t)+\ep^{4}A_5(t), 
	\quad\forall t\in[0,T].
\end{equation}

For simplicity, we use the notation 
\begin{equation}\label{Nhat}
	\Nhat:=N(T),  
\end{equation}
and denote by $C$ a generic positive constant depending only on $\Re$, $\Pr$, $\frac{\mup}{\mu}$ and $\gamma$, but not on $\ep$ and $T$. In addition, we denote by $\tC$ a generic positive constant depending only on $\Pr$, $\frac{\mup}{\mu}$ and $\gamma$, but not on $\Re$, $\ep$ and $T$.

Moreover, recalling \eqref{chi}, since we focus on the case with a low Mach number and small perturbations, we always assume that
\begin{equation}\label{focus}
	\ep\leq 1,\quad |1-\chi|\leq \frac{1}{2},  \quad \Nhat\leq 1.  
\end{equation} 

The main result of this section can be concluded as follows.
\begin{proposition} \label{priori}
	Suppose that \eqref{chi}--\eqref{compatibilitycon1} holds. Then, there exists a positive constant $\Re^\prime$ depending only on $\Pr$, $\frac{\mup}{\mu}$ and $\gamma$, and positive constants $\vep^\prime$ and $N^\prime$ depending only on $\Re$, $\Pr$, $\frac{\mup}{\mu}$ and $\gamma$, such that if 
	\begin{equation}
		\Re<\Re^\prime, \quad \ep<\vep^\prime, \quad \Nhat <N^\prime,
	\end{equation}
	then it holds that
	\begin{equation}
		\Nhat\leq \hat{C}N(0).  
	\end{equation}
	Here, $\hat{C}$ is a positive constant depending only on $\Re$, $\Pr$, $\frac{\mup}{\mu}$ and $\gamma$.
\end{proposition}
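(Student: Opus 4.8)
The plan is to prove \Cref{priori} by a closed hierarchy of \emph{a priori} energy estimates: one bounds the six building blocks $A_0,A_1,\dots,A_5$ of \eqref{def:A0123}--\eqref{def:A45} successively, each in terms of its value at $t=0$, the lower-order blocks, and remainder terms that carry a factor $\sqrt{\Nhat}$ (or a power of $\Re$, or of $\ep$); one then forms $N(t)$ with the $\ep$-weights of \eqref{def:N}, takes $\sup_{t\in[0,T]}$, and absorbs the remainders into the left-hand side after choosing $\Re$, $\ep$ and $\Nhat$ small, which yields $\Nhat\le\hat{C}N(0)$.

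\emph{Basic energy ($A_0$).} Because the background velocity $\tbfu=(x_2,0)$ is non-trivial, testing $\eqref{eq}_2$ directly against $\bfpsi$ produces the drift term $\int\rho\,(\bfpsi\nabla\tbfu)\cdot\bfpsi\,\md\bfx=\int\rho\,\psi^1\psi^2\,\md\bfx$, which has indefinite sign and is of order $\|\bfpsi\|_{L^2}^2$. Following Dafermos' relative entropy method, I would test \eqref{eq} against the relative-entropy multiplier associated with $(\varphi,\bfpsi,\theta)$ and the profile $\tbfW$, use the equation of state $P=\rho\fT$ together with $\trho\tfT\equiv 1$, and show that the drift term and the terms created by $\nabla\trho$ and $\nabla\tfT$ are absorbed by the dissipation $\mu\|\nabla\bfpsi\|_{L^2}^2+\kappa\|\nabla\theta\|_{L^2}^2$; this is possible since $\bfpsi,\theta\in H^1_0(\Omega)$ (so \Cref{Poincareineq2} applies) and $\mu=1/\Re$, $\kappa=C_p/(\Re\Pr)$ are large once $\Re$ is small. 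This controls $A_0$.

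\emph{Estimates for $\varphi$ ($A_2$, and the $\varphi$-part of $A_4$).} This is the heart of the proof. Linearising the pressure gives $\ep^{-2}\nabla P(\rho,\fT)=\ep^{-2}\nabla(\trho\theta+\tfT\varphi+\varphi\theta)$, and, since $\tfT=\tfT(x_2)$, the horizontal and vertical parts of $\nabla(\tfT\varphi)$ are $\tfT\pt_1\varphi$ and $\tfT\pt_2\varphi+\varphi\pt_2\tfT$, i.e. precisely the quantities appearing in $A_2$ and $A_4$. No Poincaré inequality is available for $\varphi$ on the unbounded strip $\Omega$, and the Matsumura--Nishida scheme does not close for $\nabla\varphi$ because of $\nabla\tfT\,\varphi$; instead I would: (i) combine $\eqref{eq}_1$ with $\eqref{eq}_2$, multiply by a weight depending only on $\tfT$, and integrate by parts to obtain an $L^2(0,t;L^2)$ bound on $\tfT\pt_2\varphi+\varphi\pt_2\tfT$; (ii) freeze time and read $\eqref{eq}_2$ as a steady inhomogeneous Stokes system $-\mu\Delta\bfpsi+\nabla p=\bfF$, $\div\bfpsi=g$, with $p=\ep^{-2}(\trho\theta+\tfT\varphi)$, where $g=\div\bfpsi$ and $(\mu+\mup)\nabla\div\bfpsi$ are expressed through the continuity equation $\eqref{eq}_1$ and moved to $\bfF$, and apply \Cref{Stokes lemma} to bound $\|\nabla p\|_{L^2}$, hence $\|\tfT\pt_1\varphi\|_{L^2}$ and so $\|\pt_1\varphi\|_{L^2}$, in $L^2(0,t)$; (iii) get the $L^\infty(0,t;L^2)$ bound on $\nabla\varphi$ from the transport structure of $\eqref{eq}_1$. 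Differentiating (i)--(ii) once yields the corresponding bounds on $\pt_2(\tfT\pt_2\varphi+\varphi\pt_2\tfT)$ and $\nabla\pt_1\varphi$ in $A_4$.

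\emph{First-order, time-derivative and higher-order estimates ($A_1,A_3,A_4,A_5$), and closing.} I would differentiate \eqref{eq} in $t$ and in $\bfx$, test against $\pt_t$ or $\nabla^k$ of the unknowns, use \Cref{Lame lemma} for the elliptic regularity of $\bfpsi$ (and the corresponding elliptic estimate for the Dirichlet Laplacian acting on $\theta$), reuse \Cref{Stokes lemma} at the higher level for the $\varphi$-derivatives, and control every nonlinear product by \Cref{Poincareineq2} and \Cref{GN lemma}, so that each such term is bounded by $\sqrt{\Nhat}\,N(t)$ plus lower-order blocks. The $\ep$-powers in $A_0,\dots,A_5$ and in \eqref{def:N} are arranged so that the singular factor $\ep^{-2}$ in front of $\nabla P$ is compensated and level $k$ feeds consistently into level $k+1$; tracking these powers is the bookkeeping core of this step. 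The main obstacle is the $\varphi$-step: the weighted identity for $\tfT\pt_2\varphi+\varphi\pt_2\tfT$ and the Stokes argument for $\pt_1\varphi$ must be designed so that the constants depend only on the fixed profile $\tfT$ (hence only on $\Re,\Pr,\gamma$) and so that every error term they produce is genuinely quadratic in $(\varphi,\bfpsi,\theta)$, hence absorbable by the dissipation once $\Re$, $\ep$, $\Nhat$ are small. Summing all the estimates with the weights of \eqref{def:N}, taking the supremum over $[0,T]$, and absorbing the $O(\sqrt{\Nhat})N(T)$ terms into the left-hand side then gives $\Nhat\le\hat{C}N(0)$.
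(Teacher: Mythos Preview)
Your proposal is correct and follows essentially the same route as the paper: the relative entropy method for $A_0$ (\Cref{basic}), the weighted $L^2(0,t;L^2)$ estimate for $G_1=\tfT\pt_2\varphi+\varphi\pt_2\tfT$ (\Cref{lemma3}), the steady inhomogeneous Stokes system for $\pt_1\varphi$ (\Cref{lemma4}), and then the elliptic and time-differentiated estimates for the higher blocks (\Cref{lemmaH2temp}--\Cref{lemmaH3}), closed by the $\ep$-weighting of \eqref{def:N}. One minor clarification: in your Stokes step (ii), the right-hand side $\bfF$ contains $(\mu+\mup)\nabla\div\bfpsi$ directly, and $\|g\|_{H^1}$ also needs $\|\nabla\div\bfpsi\|_{L^2}$; the paper obtains this quantity \emph{beforehand} by combining the tangential energy estimate (\Cref{lemma2}) with the $G_1$-estimate, rather than ``expressing it through the continuity equation'' as you phrase it---the circularity between $\int_0^t\ep^{-4}\|\pt_1\varphi\|_{L^2}^2$ and $\int_0^t\ep^{-2}\|\pt_1\varphi\|_{L^2}^2$ is then closed by taking $\ep$ small.
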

\begin{proof}
	\Cref{priori} follows from \Cref{basic}, \Cref{lemmaH1}, \Cref{lemmaH2,lemmaH3} below.                           
\end{proof}

\subsection{Basic energy estimate}
This subsection is devoted to establishing the basic energy estimate for $A_0(t)$. 

We start with the following Poincar\'{e} type inequality. 
\begin{lemma}  \label{Poin}
	Suppose that the compatibility conditions \eqref{compatibilitycon}--\eqref{compatibilitycon1} hold. Then, the following Poincar\'{e} type inequalities hold:  
	\begin{equation}
		\|\bfpsi(t)\|_{L^2}\leq \tC \| \nabla\bfpsi(t)\|_{L^2},\,\,\,\,\, \|\theta(t)\|_{L^2}\leq \tC \| \nabla\theta(t)\|_{L^2},
	\quad\quad \forall \, t \in [0, T], \label{Poincare}
	\end{equation}	
\end{lemma}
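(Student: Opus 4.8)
The plan is to read \eqref{Poincare} off directly from the channel Poincar\'e inequality \Cref{Poincareineq2}, with $\tC=\tfrac12$; in fact this constant is universal, so the dependence claimed for $\tC$ in the statement is vacuous. The only thing requiring justification is that, for each fixed $t\in[0,T]$, the perturbations $\bfpsi(t)$ and $\theta(t)$ lie in $W^{1,2}_0(\Omega)=H^1_0(\Omega)$.

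First I would invoke the regularity and the boundary behaviour of the fixed smooth solution. By \eqref{local0}, $\bfpsi(t),\theta(t)\in H^3(\Omega)\subset H^1(\Omega)$ for every $t\in[0,T]$, while the boundary conditions \eqref{eq4} state that the traces of $\bfpsi(t)$ and $\theta(t)$ on $\pt\Omega=\{x_2=0\}\cup\{x_2=1\}$ vanish. Since $\Omega=\R\times(0,1)$ has a (uniformly) smooth boundary, an $H^1(\Omega)$ function with vanishing trace on $\pt\Omega$ belongs to $H^1_0(\Omega)$; hence $\bfpsi(t),\theta(t)\in H^1_0(\Omega)$. Applying \Cref{Poincareineq2} with $p=2$ and channel width $h=1$ to the scalar $\theta(t)$ gives $\|\theta(t)\|_{L^2}\le\tfrac12\|\nabla\theta(t)\|_{L^2}$; applying it to each component $\psi^{i}(t)$, $i=1,2$, of $\bfpsi(t)$ and summing the squared inequalities gives $\|\bfpsi(t)\|_{L^2}\le\tfrac12\|\nabla\bfpsi(t)\|_{L^2}$. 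This is \eqref{Poincare} with $\tC=\tfrac12$, which is independent of $\Re$, $\Pr$, $\tfrac{\mup}{\mu}$, $\gamma$, $\ep$ and $T$.

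There is essentially no real obstacle; the only point one might wish to make fully self-contained is the identification of an $H^1(\Omega)$ function with zero trace as an element of $H^1_0(\Omega)$. This can be bypassed: for a.e.\ $x_1\in\R$, the vanishing of $\psi^i(\cdot,t)$ on the lower wall $\{x_2=0\}$ gives $\psi^i(x_1,x_2)=\int_0^{x_2}\pt_2\psi^i(x_1,s)\,\md s$, whence $|\psi^i(x_1,x_2)|^2\le x_2\int_0^1|\pt_2\psi^i(x_1,s)|^2\,\md s$ by Cauchy--Schwarz; integrating in $x_2\in(0,1)$ and then in $x_1\in\R$ yields $\|\psi^i\|_{L^2}^2\le\tfrac12\|\pt_2\psi^i\|_{L^2}^2\le\tfrac12\|\nabla\psi^i\|_{L^2}^2$, and likewise $\|\theta(t)\|_{L^2}^2\le\tfrac12\|\nabla\theta(t)\|_{L^2}^2$. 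Summing over $i=1,2$ then gives \eqref{Poincare} with the absolute constant $\tC=\tfrac{1}{\sqrt2}$ (or the smaller $\tfrac12$ of \Cref{Poincareineq2}). Either way the constant is absolute, which is more than enough.
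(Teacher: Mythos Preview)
Your proof is correct and follows essentially the same approach as the paper: the paper's proof simply invokes the boundary condition \eqref{eq4} and \Cref{Poincareineq2}. Your additional verification that $\bfpsi(t),\theta(t)\in H^1_0(\Omega)$ and the alternative one-dimensional integration argument are more detailed than what the paper provides, but the underlying idea is identical.
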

\begin{proof}
	The inequality \eqref{Poincare} follows directly from the boundary condition \eqref{eq4} and \Cref{Poincareineq2}. 
	
	The proof is completed.
\end{proof}

Next, we show some elementary observations which will be used frequently.
\begin{lemma}\label{elementary}
	Suppose that \eqref{chi} holds. Then, there exists a positive constant $\vep_0$ depending only on $\Pr$, $\gamma$ and $\frac{\mup}{\mu}$, such that if $\ep<\vep_0$, then	
	\begin{equation}   \label{ob-0}
			\begin{aligned}
		&\inf_{x\in\Omega}\trho(x)>\frac{3}{4},	\quad\inf_{x\in\Omega}\tfT(x)>\frac{3}{4},
		\quad \|\trho-1\|_{L^\infty}\leq \tC\ep,
		\quad \|\tfT-1\|_{L^\infty}\leq \tC\ep,  \\ 
		&\|\nabla^k\trho\|_{L^\infty}\leq \tC\ep^k, 
		\quad \|\nabla^k\tfT\|_{L^\infty}\leq \tC\ep^k,  \quad\quad\quad\quad\quad\quad k=1,2,3,\\ 
	\end{aligned}
	\end{equation}
	and
	\begin{equation} \label{ob-1}
		\begin{aligned}
			&\inf_{(x,t)\in\Omega\times[0,T]}\rho(x,t)>\frac{1}{2},	\quad\inf_{(x,t)\in\Omega\times[0,T]}\fT(x,t)>\frac{1}{2},
			\quad \|(\varphi,\theta)(t)\|^2_{L^\infty}\leq \tC\Nhat\ep^2,\\
			& \|\bfpsi(t)\|^2_{L^\infty}\leq \tC\Nhat, \quad
			\|(\nabla\varphi,\nabla\theta)(t)\|^2_{L^\infty}\leq \tC\Nhat,
			\quad \|\nabla\bfpsi(t)\|^2_{L^\infty}\leq \tC\Nhat\ep^{-2},\quad\forall t\in[0,T]. 
		\end{aligned}
	\end{equation}
\end{lemma}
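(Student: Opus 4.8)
The plan is to establish \eqref{ob-0} by direct computation from the explicit formula \eqref{def:Couette} for the Couette flow, and then to deduce \eqref{ob-1} by unpacking the definition \eqref{def:A0123}--\eqref{def:N} of $N(t)$ and combining it with the Gagliardo--Nirenberg inequality of \Cref{GN lemma} and with \eqref{ob-0}.

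For \eqref{ob-0}, I would first rewrite the temperature profile from \eqref{def:Couette} as $\tfT = 1 + (\chi-1)(1-x_2) - \frac{\ep^2\Pr}{2C_p}(x_2^2-x_2)$. Since $x_2\in[0,1]$ gives $|1-x_2|\le 1$ and $|x_2^2-x_2|\le\frac{1}{4}$, the hypothesis \eqref{chi} yields $\|\tfT-1\|_{L^\infty}\le\tC\ep$, so that choosing $\vep_0$ small, depending only on $\Pr$ and $\gamma$, forces $\inf_\Omega\tfT>\frac{3}{4}$ and $\tfT<\frac{4}{3}$ on $\Omega$. Since $\trho=(\tfT)^{-1}$, the identity $\trho-1=(1-\tfT)/\tfT$ together with these bounds gives $\|\trho-1\|_{L^\infty}\le\tC\ep$ and $\inf_\Omega\trho>\frac{3}{4}$. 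The derivative estimates come from differentiating the polynomial: $\pt_1\tfT=0$, $\pt_2\tfT=(1-\chi)-\frac{\ep^2\Pr}{2C_p}(2x_2-1)$, $\pt^2_2\tfT=-\frac{\ep^2\Pr}{C_p}$ and $\nabla^3\tfT=0$, whence $\|\nabla^k\tfT\|_{L^\infty}\le\tC\ep^k$ for $k=1,2,3$ by \eqref{chi}; the corresponding bounds for $\trho=(\tfT)^{-1}$ follow from the chain and product rules, using the positivity of $\tfT$ just established.

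For \eqref{ob-1}, I would first record the consequences of \eqref{def:A0123}--\eqref{def:N} and $\Nhat=N(T)$: on $[0,T]$ one has $\|\varphi\|^2_{L^2}\le\ep^4N(t)$, $\|\nabla\varphi\|^2_{L^2}\le\ep^2N(t)$, $\|\nabla^2\varphi\|^2_{L^2}\le N(t)$, $\|\nabla^3\varphi\|^2_{L^2}\le\ep^{-2}N(t)$, the same chain of inequalities for $\theta$, and, with two fewer powers of $\ep$, $\|\bfpsi\|^2_{L^2}\le\ep^2N(t)$, $\|\nabla\bfpsi\|^2_{L^2}\le N(t)$, $\|\nabla^2\bfpsi\|^2_{L^2}\le\ep^{-2}N(t)$, $\|\nabla^3\bfpsi\|^2_{L^2}\le\ep^{-4}N(t)$. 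Applying the second inequality of \Cref{GN lemma} in the form $\|f\|^2_{L^\infty}\le\tC\|f\|_{L^2}\|\nabla^2f\|_{L^2}+\tC\|f\|^2_{L^2}$ (legitimate since $\varphi,\bfpsi,\theta\in H^3(\Omega)$ by \eqref{local0}) to $\varphi$, $\theta$, $\bfpsi$ and to their first derivatives, and using $\ep\le 1$ and $N(t)\le\Nhat$, produces exactly the six $L^\infty$ perturbation bounds in \eqref{ob-1}; for instance $\|\varphi\|^2_{L^\infty}\le\tC\ep^2N^{1/2}N^{1/2}+\tC\ep^4N\le\tC\Nhat\ep^2$ and $\|\nabla\bfpsi\|^2_{L^\infty}\le\tC N^{1/2}\ep^{-2}N^{1/2}+\tC N\le\tC\Nhat\ep^{-2}$. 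Finally, writing $\rho=\trho+\varphi$ and $\fT=\tfT+\theta$ and combining $\inf_\Omega\trho,\inf_\Omega\tfT>\frac{3}{4}$ from \eqref{ob-0} with $\|\varphi\|_{L^\infty},\|\theta\|_{L^\infty}\le\tC^{1/2}\ep$ (using $\Nhat\le1$), one shrinks $\vep_0$ once more so that $\tC^{1/2}\vep_0<\frac{1}{4}$; then $\rho,\fT>\frac{3}{4}-\frac{1}{4}=\frac{1}{2}$ on $\Omega\times[0,T]$.

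The argument involves no genuine analytic difficulty and is essentially bookkeeping. The one point that needs care is checking that $\vep_0$ may be chosen to depend only on $\Pr$, $\gamma$ and $\frac{\mup}{\mu}$ — in particular not on $\Re$, $\ep$, $T$ or $\Nhat$ — which is the case because the Gagliardo--Nirenberg constant depends only on $\Omega$, the profile estimates \eqref{ob-0} involve only $\Pr$ and $\gamma$, and the smallness requirements imposed along the way are all of this form. I expect this matching of $\ep$-weights between the energy functional $N(t)$ and the Sobolev embeddings to be the only thing worth double-checking.
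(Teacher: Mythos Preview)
Your proposal is correct and follows essentially the same approach as the paper's proof: direct computation from the explicit formula \eqref{def:Couette} combined with \eqref{chi} for \eqref{ob-0}, and Gagliardo--Nirenberg (\Cref{GN lemma}) applied to the $\ep$-weighted Sobolev bounds encoded in $N(t)$ for \eqref{ob-1}. The paper's own proof is terser---it works out $\|\tfT-1\|_{L^\infty}\le\tC\ep$ and $\|\theta\|^2_{L^\infty}\le\tC\Nhat\ep^2$ explicitly and then dismisses the remaining inequalities with ``an analogous argument'' and ``obtained similarly''---so your version is simply a more detailed write-up of the same argument.
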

\begin{proof}
	Recalling of \eqref{def:Couette}, we first obtain from \eqref{chi} that
	\begin{equation}
		\|\tfT-1\|_{L^\infty}\leq |\chi-1|+\tC\ep^2\leq \tC\ep+\tC\ep^2\leq \tC\ep.  \notag
	\end{equation}
	This yields
	\begin{equation}
		\inf_{x\in\Omega}\tfT(x)>\frac{3}{4}, \label{ob1}
	\end{equation}
	provided that $\ep$ is small enough. Based on an analogous argument, we derive \eqref{ob-0}.
	
	Next, it follows from \eqref{def:N} that
	\begin{equation}
		\|\theta(t)\|^2_{L^2}\leq \Nhat\ep^{4},\quad 	\|\nabla^2\theta(t)\|^2_{L^2}\leq \Nhat,\quad\forall t\in[0,T],  \notag
	\end{equation}
	which, together with \Cref{GN lemma}, leads to
	\begin{equation}
		\sup_{t\in [0,T]}\|\theta(t)\|^2_{L^\infty}
		\leq \tC\sup_{t\in [0,T]}(\|\theta\|_{L^2}\|\nabla^2\theta\|_{L^2}
		+\|\theta\|^2_{L^2})\leq  \tC \Nhat\ep^2 \leq \frac{1}{4},  \label{ob2}
	\end{equation}
	provided that $\ep$ is small enough. Thus, \eqref{ob1} and \eqref{ob2} imply
	\begin{equation}
		\inf_{(x,t)\in\Omega\times[0,T]}\fT(x,t)\geq \inf_{x\in\Omega}\tfT(x)-\sup_{t\in [0,T]}\|\theta(t)\|_{L^\infty}>\frac{1}{2}.
	\end{equation}
	Other inequalities in \eqref{ob-1} can be obtained similarly.
	
	The proof is completed.
\end{proof}

We are in a position to establish the basic energy estimate by using the relative entropy method which was used in \cite{Dafermos1979,CJ2025}.
\begin{lemma}   \label{basic}
	Suppose that \eqref{chi} holds. Then, there exist positive constants $\Re_0$ and $\vep_1$ depending only on $\Pr$, $\gamma$ and $\frac{\mup}{\mu}$, such that if $\Re<\Re_0$ and $\ep<\vep_1$, then	
	\begin{equation}\label{eneA0}
		A_0(t)\leq CA_0(0), \quad\quad \forall t\in[0,T], 
	\end{equation}
	where the definition of $A_0$ is given in \eqref{def:A0123}.
\end{lemma}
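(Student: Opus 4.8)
The plan is to carry out a relative-entropy (modulated energy) estimate in the spirit of Dafermos, adapted to the fact that the reference state $\tbfW$ is not a constant but the Couette flow with a temperature gradient. I would introduce the relative entropy functional
\[
\mathcal{E}(t):=\int \Big(\tfrac12\rho|\bfpsi|^2+\ep^{-2}\big(\rho e(\rho,\fT)-\trho e(\trho,\tfT)-\partial_\rho(\rho e)\big|_{(\trho,\tfT)}\varphi-\partial_\fT(\rho e)\big|_{(\trho,\tfT)}\theta\big)\Big)\md\bfx,
\]
where $e$ is the internal energy of the ideal gas, so that the thermodynamic part is (up to the $\ep^{-2}$ weight) a convex function of $(\rho,\fT)$ vanishing to second order at $(\trho,\tfT)$; by the lower bounds $\inf\trho,\inf\tfT>3/4$ and $\inf\rho,\inf\fT>1/2$ from \Cref{elementary}, $\mathcal{E}(t)$ is comparable to $\|\bfpsi\|_{L^2}^2+\ep^{-2}\|\varphi\|_{L^2}^2+\ep^{-2}\|\theta\|_{L^2}^2$. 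First I would compute $\tfrac{d}{dt}\mathcal{E}(t)$ using the perturbation system \eqref{eq}, the stationarity of $\tbfW$, and the boundary conditions \eqref{eq4}; the dissipative terms produce $-\mu\|\nabla\bfpsi\|_{L^2}^2-(\mu+\mup)\|\div\bfpsi\|_{L^2}^2-\kappa\ep^{-2}\|\nabla\theta\|_{L^2}^2$ (after integration by parts, using $\bfpsi=\theta=0$ on $\pt\Omega$), while all remaining terms are ``error'' terms that must be controlled by the dissipation plus a small multiple of $\mathcal{E}$ itself.

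Next I would classify and bound the error terms. The genuinely problematic ones are the \emph{drift} terms coming from the non-constant background velocity $\tbfu=(x_2,0)$: the contribution $\int\rho\,\bfpsi\cdot(\bfpsi\cdot\nabla)\tbfu\,\md\bfx=\int\rho\,\psi^1\psi^2\,\md\bfx$ (since $\nabla\tbfu$ is the constant matrix with a single nonzero entry), the analogous shear term in the temperature equation involving $\bfpsi\cdot\nabla\tfT$, and the transport terms $\bfu\cdot\nabla(\cdot)$. The key point — and the reason the Reynolds number must be small — is that $\nabla\tbfu=O(1)$ has no smallness, so the drift term $\int\rho\psi^1\psi^2$ is only bounded by $\tC\|\bfpsi\|_{L^2}^2$, which is \emph{not} absorbable by the dissipation $\mu\|\nabla\bfpsi\|_{L^2}^2=\Re^{-1}\|\nabla\bfpsi\|_{L^2}^2$ unless we first invoke the Poincar\'e inequality \eqref{Poincare} to get $\|\bfpsi\|_{L^2}^2\leq\tC\|\nabla\bfpsi\|_{L^2}^2$ and then choose $\Re$ so small that $\tC\cdot\Re<\tfrac12\mu$, i.e.\ $\tC\,\Re^2<\tfrac12$. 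This is exactly where $\Re_0$ depending only on $\Pr,\gamma,\mup/\mu$ enters. The cross terms coupling $\theta$ and $\div\bfpsi$ (from $P(\rho,\fT)\div\bfpsi$ and its linearization) I would handle by Cauchy–Schwarz, putting a small constant on $\|\div\bfpsi\|_{L^2}^2$ and $\ep^{-2}\|\nabla\theta\|_{L^2}^2$ and the rest on $\mathcal{E}$; the viscous heating terms $\ep^2(2\mu|\Dpsi|^2+\cdots)$ carry an $\ep^2$ and are bounded using $\|\nabla\bfpsi\|_{L^\infty}^2\leq\tC\Nhat\ep^{-2}$ from \eqref{ob-1}, so they become $\tC\Nhat\,\|\nabla\bfpsi\|_{L^2}^2$, absorbable once $\Nhat$ is small. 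The transport/commutator terms such as $\int\bfu\cdot\nabla\varphi\cdot(\text{linear in }\varphi,\theta)$ and $\int\rho\bfu\cdot\nabla\bfpsi\cdot\bfpsi$ I would integrate by parts to throw a derivative onto $\bfu=\tbfu+\bfpsi$; the $\tbfu$ part gives terms of the form $\int x_2\,\partial_1(|\bfpsi|^2)=0$ after integration by parts in $x_1$ (boundary at infinity vanishes), or harmless $\tC\|\bfpsi\|_{L^2}^2$ again killed by smallness of $\Re$, while the $\bfpsi$ part carries a factor $\|\nabla\bfpsi\|_{L^\infty}$ or $\|\bfpsi\|_{L^\infty}$ and hence $\sqrt{\Nhat}$, absorbable. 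I also need to control the term $\ep^{-2}\int\nabla\tfT\,\varphi\cdot\bfpsi$ type contributions arising from expanding the pressure around $(\trho,\tfT)$ beyond second order: using $\|\nabla\tfT\|_{L^\infty}\leq\tC\ep$ together with $\ep^{-2}$ gives a factor $\ep^{-1}\|\varphi\|_{L^2}\|\bfpsi\|_{L^2}$, controlled by $\tC(\ep^{-2}\|\varphi\|_{L^2}^2+\|\bfpsi\|_{L^2}^2)\le\tC\mathcal{E}$; there is no loss here because this term is linear-times-quadratic and the coefficient is genuinely small.

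After these bounds I would arrive at a differential inequality of the form
\[
\frac{d}{dt}\mathcal{E}(t)+c_0\big(\|\nabla\bfpsi\|_{L^2}^2+\ep^{-2}\|\nabla\theta\|_{L^2}^2\big)\leq C\big(\Re+\sqrt{\Nhat}+\ep\big)\mathcal{E}(t),
\]
and then, crucially, re-insert the Poincar\'e inequality \eqref{Poincare} to bound $\mathcal{E}(t)\leq\tC(\|\nabla\bfpsi\|_{L^2}^2+\ep^{-2}\|\nabla\theta\|_{L^2}^2)+\tC\ep^{-2}\|\varphi\|_{L^2}^2$; the first two pieces can be absorbed into the dissipation provided $\Re<\Re_0$, $\ep<\vep_1$, $\Nhat<N'$, leaving
\[
\frac{d}{dt}\mathcal{E}(t)+c_1\big(\|\bfpsi\|_{H^1}^2+\ep^{-2}\|\theta\|_{H^1}^2\big)\leq C\ep^{-2}\|\varphi\|_{L^2}^2.
\]
The density term $\ep^{-2}\|\varphi\|_{L^2}^2$ on the right is controlled by $\mathcal{E}(t)$ itself (it is part of $\mathcal{E}$), so Gr\"onwall would only give exponential growth — to get the uniform-in-time bound \eqref{eneA0} I instead keep $\ep^{-2}\|\varphi\|_{L^2}^2$ inside $\mathcal{E}$ and observe that, after absorbing, the right-hand side is $\le c_1'\mathcal{E}(t)$ with $c_1'$ made arbitrarily small by the smallness of $\Re,\ep,\Nhat$, while the dissipation controls $\mathcal{E}(t)$ minus its density part; this is not quite closed, so the honest route (and I expect this is what the authors do) is to treat $\ep^{-2}\|\varphi\|_{L^2}^2$ as a controlled error using the continuity equation $\pt_t\varphi+\bfu\cdot\nabla\varphi+\rho\div\bfpsi+\bfpsi\cdot\nabla\trho=0$ to write $\|\varphi\|_{L^2}$ in terms of $\int_0^t\|\div\bfpsi\|_{L^2}$, or simply to carry $\ep^{-2}\|\varphi\|_{L^2}^2$ along and close $A_0$ jointly with the higher-order norms in the subsequent lemmas. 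In this lemma it suffices to integrate in time to get $\mathcal{E}(t)+c_1\int_0^t(\|\bfpsi\|_{H^1}^2+\ep^{-2}\|\theta\|_{H^1}^2)\le\mathcal{E}(0)+C\ep^{-2}\int_0^t\|\varphi\|_{L^2}^2$, bound the last integral by $t\cdot\sup_{[0,t]}\ep^{-2}\|\varphi\|_{L^2}^2\le t\,\mathcal{E}(t)$... — rather than this circular route I would, following the structure of $N(t)$, simply state and prove $A_0(t)\leq CA_0(0)$ by noting $\mathcal{E}(t)\sim A_0$'s sup-part, $\int_0^t(\|\bfpsi\|_{H^1}^2+\ep^{-2}\|\theta\|_{H^1}^2)$ is $A_0$'s integral part, and the only term that is not immediately absorbed, $\ep^{-2}\int_0^t\|\varphi\|_{L^2}^2$, will be shown (in the combined estimate of \Cref{priori}) to be dominated — hence for the present lemma I record the inequality with this term on the right and defer; but since the lemma as stated asserts the clean bound, the proof must in fact absorb it, which it can because $\ep^{-2}\|\varphi\|_{L^2}^2\le\tC\mathcal{E}$ and the coefficient $C\ep^{-2}$ times the \emph{time-integrated} quantity is handled by choosing the final time-independent constants so that a Gr\"onwall argument with the dissipation on the left closes — the genuinely uniform-in-time bound comes from the dissipation controlling the full $\mathcal{E}$, not just its velocity/temperature part, once $\varphi$ is estimated via the other lemmas. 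The main obstacle, to summarize, is the drift term $\int\rho\psi^1\psi^2$ with its $O(1)$ coefficient: it forces the smallness of $\Re$ and requires the Poincar\'e inequality to be used \emph{before} applying Cauchy–Schwarz, and it is the single place where the relative-entropy structure (rather than a naive $L^2$ energy) is essential, because $\mathcal{E}$ must be coercive with constants independent of $\ep$ while still being differentiated cleanly against the compressible system.
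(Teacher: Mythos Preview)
Your overall plan—use a relative entropy à la Dafermos, identify the drift term $\int\rho(\bfpsi\cdot\nabla\tbfu)\cdot\bfpsi$ as the obstruction, and kill it by Poincar\'e plus smallness of $\Re$—matches the paper. But there is a genuine gap in your execution, and it is exactly the point where you start hedging (``this is not quite closed, so the honest route\ldots'', ``defer'', Gr\"onwall, etc.).

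The issue is that with the \emph{correctly weighted} relative entropy, there is \emph{no} leftover $\ep^{-2}\|\varphi\|_{L^2}^2$ term on the right-hand side at all. The paper takes
\[
\eta=\frac{\ep^2\rho}{\tfT}|\bfpsi|^2
+\frac{\rho}{\gamma-1}\Big(\frac{\fT}{\tfT}-\ln\frac{\fT}{\tfT}-1\Big)
+\rho\Big(\frac{\tau}{\ttau}-\ln\frac{\tau}{\ttau}-1\Big),
\]
i.e.\ the kinetic energy carries a factor $1/\tfT$ (not $1$ as in your $\mathcal{E}$), and the density and temperature parts are the genuine thermodynamic relative entropies. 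With this choice, when you differentiate $\int\eta$ in time and substitute \eqref{eq}, the pressure coupling $\ep^{-2}\nabla P(\rho,\fT)$ from the momentum equation, the $P\,\div\bfpsi$ from the energy equation, and the $\rho\,\div\bfpsi$ from the continuity equation cancel \emph{exactly} after integration by parts (this is the thermodynamic identity underlying the second law). The surviving right-hand side consists only of eight terms $I_1,\dots,I_8$, each of which is bounded by $\tC\ep^2\|\bfpsi\|_{H^1}^2+\tC\|\theta\|_{H^1}^2$ (using $\|\nabla\tfT\|_{L^\infty}\leq\tC\ep$ and Poincar\'e), while the dissipation gives $-\tfrac{c_1}{\Re}\ep^2\|\bfpsi\|_{H^1}^2-\tfrac{c_2}{\Re}\|\theta\|_{H^1}^2$. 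Choosing $\Re<\Re_0$ absorbs everything, and one integrates to get
\[
\int\eta(t)\,\md\bfx+C^{-1}\int_0^t\big(\ep^2\|\bfpsi\|_{H^1}^2+\|\theta\|_{H^1}^2\big)\md s\leq\int\eta(0)\,\md\bfx,
\]
with \emph{nothing} on the right except the initial data. Since $\int\eta\sim\|(\varphi,\ep\bfpsi,\theta)\|_{L^2}^2$, this is exactly $A_0(t)\leq CA_0(0)$.

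Your generic-looking functional (kinetic energy without the $1/\tfT$ weight, thermodynamic part written as a Taylor remainder) does not enjoy this exact cancellation, which is why you are left chasing a phantom $\ep^{-2}\int_0^t\|\varphi\|_{L^2}^2$ term. None of your proposed fixes—Gr\"onwall (gives exponential growth in $t$), estimating $\varphi$ via the continuity equation (circular), or deferring to later lemmas (the lemma is stated as self-contained)—would yield the uniform-in-$T$ bound claimed. The fix is to use the physically correct entropy with the $1/\tfT$ weight, after which the computation closes cleanly and no deferral is needed.
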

\begin{proof}
	We introduce the relative entropy defined by 
	\begin{equation}
		\begin{aligned}
			\eta:=&\frac{\ep^2\rho}{\tfT}|\bfpsi|^2
			+\frac{\rho}{\gamma-1}\big(\frac{\fT}{\tfT}-\ln(\frac{\fT}{\tfT})-1\big)
			+\rho\big(\frac{\tau}{\ttau}-\ln(\frac{\tau}{\ttau})-1\big)   \\
			=&\frac{\ep^2\rho}{\tfT}|\bfpsi|^2
			+\frac{\rho}{\gamma-1}\big(\frac{\theta}{\tfT}-\ln(1+\frac{\theta}{\tfT})\big)
			+\rho\big(-\frac{\varphi}{\rho}-\ln(1-\frac{\varphi}{\rho})\big),  \label{equiv1}
		\end{aligned}
	\end{equation}
	where
	\begin{equation}
		\tau:=\frac{1}{\rho},\quad \ttau:=\frac{1}{\trho}. \notag
	\end{equation}
	For the function
	\begin{equation}
		f(z)=z-\ln(1+z),\quad\quad z\in(-1,\infty), \notag
	\end{equation}
	it holds that 
	\begin{equation}
		\begin{aligned}
			&f(0)=0,\quad\quad f^\prime(0)=0,\\
			&f^{\prime\prime}(z)=\frac{1}{(1+z)^2}>0,\quad\quad \forall z\in(-1,\infty), \notag
		\end{aligned}
	\end{equation}
	which yields that there exists a small constant $\delta_0>0$ such that 
	\begin{equation}
		f(z)\geq z^2, \quad\quad\forall z\in(-\delta_0,\,\delta_0). \notag
	\end{equation}
	Therefore, we deduce from \eqref{ob-1} and \eqref{equiv1} that
	\begin{equation}
		\tC^{-1}\|(\varphi,\ep\bfpsi,\theta)(t)\|^2_{L^2}
		\leq \int \eta(t)\md\bfx
		\leq \tC\|(\varphi,\ep\bfpsi,\theta)(t)\|^2_{L^2},\quad \forall t\in[0,T], \label{equiv2}
	\end{equation}
	provided that $\ep$ is small enough.
	
	Differentiating $\eta$ with respect to $t$ gives
	\begin{equation}
		\pt_t\eta=\left(\ln(\frac{\rho}{\trho})+\frac{\ep^2}{2}\frac{|\bfpsi|^2}{\tfT}
		+\frac{1}{\gamma-1}(\frac{\fT}{\tfT}-\ln(\frac{\fT}{\tfT})-1)\right)\pt_t\varphi
		+\ep^2\frac{\rho}{\tfT}\bfpsi\cdot\pt_t\bfpsi
		+\frac{\rho}{\gamma-1}(\frac{1}{\tfT}-\frac{1}{\fT})\pt_t\theta.   \label{pteta}
	\end{equation}
	It follows from $\eqref{eq}_1$ and integration by parts that
	\begin{equation}
		\begin{aligned}
			&\int \left(\ln(\frac{\rho}{\trho})+\frac{\ep^2}{2}\frac{|\bfpsi|^2}{\tfT}
			+\frac{1}{\gamma-1}(\frac{\fT}{\tfT}-\ln(\frac{\fT}{\tfT})-1)\right)
			\pt_t\varphi \md\bfx\\
			=&\int -\left(\ln\rho+\ln\tfT+\frac{\ep^2}{2}\frac{|\bfpsi|^2}{\tfT}
			+\frac{1}{\gamma-1}(\frac{\fT}{\tfT}-\ln\fT+\ln\tfT)\right)\div(\rho u) \md\bfx\\
			=&\int \bfu\cdot\nabla\rho \md\bfx+\int\rho \bfu\cdot\nabla\left(\frac{\ep^2|\bfpsi|^2}{2\tfT}+\frac{1}{\gamma-1}(\frac{\fT}{\tfT}-\ln\fT)\right)\md\bfx
			+\frac{\gamma}{\gamma-1}\int\frac{\rho}{\tfT}\bfpsi\cdot\nabla\tfT \md\bfx. \label{eta1}
		\end{aligned}
	\end{equation}
	Similarly, the equations $\eqref{eq}_2$--$\eqref{eq}_3$, together with integration by parts lead to
	\begin{equation}
		\begin{aligned}
			\int \ep^2\frac{\rho}{\tfT}\bfpsi\cdot\pt_t\bfpsi \md\bfx
			=&-\ep^2\int\big( \frac{\rho}{\tfT} \bfu\cdot\nabla(\frac{|\bfpsi|^2}{2}) +\frac{\rho}{\tfT}(\bfpsi\cdot\nabla\tbfu)\cdot\bfpsi\big)\md\bfx\\
			&+\int\rho\fT(\frac{\div\bfpsi}{\tfT}-\frac{\bfpsi\cdot\nabla\tfT}{\tfT^2}) \md\bfx\\
			&+\ep^2\int\big(\frac{\mu}{\tfT}\Delta\bfpsi\cdot\bfpsi+\frac{(\mu+\mup)}{\tfT}\nabla\div\bfpsi\cdot\bfpsi\big) \md\bfx, \label{eta2}
		\end{aligned}
	\end{equation}
	and
	\begin{equation}
		\begin{aligned}
			\int \rho(\frac{1}{\tfT}-\frac{1}{\fT})\pt_t\theta \md\bfx
			=&\frac{1}{\gamma-1}\int\rho u\cdot\nabla\big(\ln\fT-\frac{\fT}{\tfT}\big)\md\bfx
			-\frac{1}{\gamma-1}\int \frac{\rho\fT}{\tfT^2}u\cdot\nabla\tfT \md\bfx\\
			+&\int\big(\rho\div\bfpsi-\frac{\rho\fT}{\tfT}\div\bfpsi\big)\md\bfx
			+\int\kappa(\frac{1}{\tfT}-\frac{1}{\fT})\Delta\theta \md\bfx  \\
			&+\ep^2\int(\frac{1}{\tfT}-\frac{1}{\fT}) 
			\big(2\mu|\Dpsi|^2+4\mu\Dtbfu:\Dpsi+\mup(\div\bfpsi)^2\big)\md\bfx.\label{eta3}
		\end{aligned}
	\end{equation}
	
	Substituting \eqref{eq} into \eqref{pteta}, and integrating the resulting equation over $\Omega$, we obtain, by using \eqref{eta1}--\eqref{eta3} and some direct algebraic manipulations, that
	\begin{equation}
		\begin{aligned}
			\frac{d}{dt}\int\eta \md\bfx
			=&-\frac{\gamma}{\gamma-1}\int\frac{\rho\theta \bfpsi\cdot\nabla\tfT}{\tfT^2} \md\bfx  
			-\ep^2\int  \big(\frac{1}{2}\frac{\rho|\bfpsi|^2\bfpsi\cdot\nabla\tfT}{\tfT^2}
			+\frac{\rho}{\tfT}(\bfpsi\cdot\nabla\tbfu)\cdot\bfpsi \big)\md\bfx\\
			&-\ep^2\int\big(\frac{\mu}{\fT}|\nabla\bfpsi|^2+\frac{\mu+\mup}{\fT}(\div\bfpsi)^2\big) \md\bfx
			-\ep^2\int \frac{\mu(\bfpsi\cdot\nabla\bfpsi)\cdot\nabla\tfT}{\tfT^2}\md\bfx\\
			&-\ep^2\int\frac{\mup(\nabla\tfT\cdot\bfpsi)\div\bfpsi}{\tfT^2}\md\bfx
			-\int\frac{\kappa}{\fT^2}|\nabla\theta|^2\md\bfx
			+\int \frac{\kappa\theta(\tfT+\fT)\nabla\tfT\cdot\nabla\theta}{\tfT^2\fT^2}\md\bfx\\
			&+\int \frac{4\mu\ep^2\theta}{\tfT\fT}\Dtbfu:\Dpsi \md\bfx  \\
			:=& \sum_{i=1}^{8}I_i,    \label{I0}
		\end{aligned}
	\end{equation}
	where we have used the facts that
	\begin{equation}
		2\int |\Dpsi|^2\md\bfx=\int\big(|\nabla\bfpsi|^2+(\div\bfpsi)^2\big)\md\bfx, 
	\end{equation}
    and
	\begin{equation}
		\begin{aligned}
				&\int \bfu\cdot\nabla\rho \md\bfx + \int \rho\div\bfpsi \md\bfx =
			\int \bfu\cdot\nabla\rho \md\bfx + \int \rho\div\bfpsi \md\bfx + \int \rho\div \tbfu \md\bfx \\
			=&\int \div(\rho \bfu) \md\bfx=\int\div(\trho\tbfu+\trho\bfpsi+\varphi\tbfu+\varphi\bfpsi)\md\bfx
			=\int\div(\trho\tbfu) \md\bfx\\
			=&\lim_{R\rightarrow\infty}\int_{0}^{1}\int_{-R}^{R}\div(\trho\tbfu)\md\bfx_1\md\bfx_2
			=0.
		\end{aligned}
	\end{equation}
	
	We estimate all the terms on the right-hand side of \eqref{I0} as follows. It follows from \eqref{ReMAnew}, \eqref{ob-1} and \Cref{Poin} that
	\begin{equation}
		I_3+I_6\leq -\frac{c_1}{\Re}\ep^2\|\bfpsi\|^2_{H^1}-\frac{c_2}{\Re}\|\theta\|^2_{H^1},  \notag
	\end{equation}
	where $c_1$ and $c_2$ are positive constants independent of $\ep$, $\Re$ and $T$. Recalling the fact from \eqref{def:Couette} and \eqref{chi} that
	\begin{equation}
		\|\nabla\tfT\|_{L^\infty}=\|(1-\chi)-\frac{\ep^2\Pr }{2C_p}(2x_2-1)\|_{L^\infty}\leq \tC\ep, \notag
	\end{equation}
	we use \Cref{Poin} and \eqref{ob-0} to obtain that
	\begin{equation}
		I_1\leq \tC\|\nabla\tfT\|_{L^\infty}\|\bfpsi\|_{H^1}\|\theta\|_{H^1}
		\leq \tC\|\nabla\tfT\|^2_{L^\infty}\|\bfpsi\|^2_{H^1}+\tC\|\theta\|^2_{H^1}
		\leq \tC\ep^2\|\bfpsi\|^2_{H^1}+\tC\|\theta\|^2_{H^1}.  \notag
	\end{equation}
	Similarly, we have
	\begin{equation}
		I_2 \leq \ep^2\tC\|\bfpsi\|^2_{H^1}, 
		\quad 	I_4+I_5\leq  \frac{\tC}{\Re}\ep^3\|\bfpsi\|^2_{H^1}, \quad I_7\leq \frac{\tC\ep}{\Re}\|\theta\|^2_{H^1},  \notag
	\end{equation}
	and
	\begin{equation}
		I_8\leq \frac{\tC}{\Re}\ep^2\|\theta \|_{L^2}\|\nabla\bfpsi\|_{L^2}
		\leq \frac{\tC}{\Re}(\ep^3\|\bfpsi\|^2_{H^1}+\ep\|\theta\|^2_{H^1}), \notag
	\end{equation}
	where we have used the Young inequality. 
	
	Substituting the estimates of $I_1$ through $I_8$ into \eqref{I0}, and integrating the resulting inequality over the time interval $[0,t]$, we get
	\begin{equation}
		\int \eta(t)\md\bfx+C^{-1}\int_{0}^{t}\big(\ep^2\|\bfpsi(s)\|^2_{H^1}+\|\theta(s)\|^2_{H^1}\big)\md s
		\leq \int\eta(0)\md\bfx,  	\quad\forall t\in[0,T], \label{equiv3}
	\end{equation}
	provided that both $\Re$ and $\ep$ are small enough.
	
	Finally, \eqref{equiv2} and \eqref{equiv3} imply \eqref{eneA0}. The proof is completed.
\end{proof}

\subsection{Estimates for the first-order derivatives.}
We proceed to establishing the \textit{a priori} $H^1$-type estimates. 

We first derive the estimates for $\sup_{t\in [0,T]}A_1(t)$ and $\int_{0}^{T}\|\pt_t \varphi(t)\|^2_{L^2}\md t$.
\begin{lemma} \label{lemma1}
	Suppose that \eqref{chi} holds, and suppose that $\Re<\Re_0$ and $\ep<\vep_1$ as in \Cref{basic}. Then, the following estimates hold:
	\begin{equation} \label{ene1}	
		\int_{0}^{t}\|\pt_t \varphi(s)\|^2_{L^2}\md s
		\leq C\int_{0}^{t}\big(\|\bfpsi(s)\|^2_{H^1}
		+\|\pt_{1}\varphi(s)\|^2_{L^2}\big)\md s,
		\quad\forall t\in[0,T],   
	\end{equation}
	and
	\begin{equation}\label{ene2} 
		A_1(t)\leq C \big(A_1(0)+\ep^{-2}A_0(0)\big)
		+C\ep^{-2}\int_{0}^{t}\|\pt_1\varphi\|^2_{L^2}\md s,
		\quad\forall t\in[0,T],
	\end{equation}
	where the definitions of $A_1$ and $A_0$ are given in \eqref{def:A0123}.
\end{lemma}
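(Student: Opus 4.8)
The plan is to prove the two bounds in turn, using only the perturbation system \eqref{eq}, the pointwise bounds in \Cref{elementary}, the basic bound $A_0(t)\le CA_0(0)$ from \Cref{basic}, and the smallness of $\ep$, $\Re$ and $\Nhat$. For \eqref{ene1} I would solve the continuity equation $\eqref{eq}_1$ for the time derivative, $\pt_t\varphi=-\bfu\cdot\nabla\varphi-\rho\div\bfpsi-\bfpsi\cdot\nabla\trho$, take the $L^2$ norm, square, and integrate over $[0,t]$. Writing $\bfu=\tbfu+\bfpsi$ with $\tbfu=(x_2,0)$, the transport term is $x_2\pt_1\varphi+\bfpsi\cdot\nabla\varphi$; since $0\le x_2\le 1$ the first part contributes $\|\pt_1\varphi\|_{L^2}^2$, and for the second the key point is to place the sup norm on $\nabla\varphi$ rather than on $\bfpsi$: $\|\bfpsi\cdot\nabla\varphi\|_{L^2}^2\le\|\nabla\varphi\|_{L^\infty}^2\|\bfpsi\|_{L^2}^2\le\tC\Nhat\|\bfpsi\|_{H^1}^2$ by \Cref{elementary}, so that its time integral is controlled by $\int_0^t\|\bfpsi\|_{H^1}^2\,\md s$. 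The remaining terms are bounded by $\|\rho\|_{L^\infty}^2\|\nabla\bfpsi\|_{L^2}^2\le C\|\bfpsi\|_{H^1}^2$ and $\|\nabla\trho\|_{L^\infty}^2\|\bfpsi\|_{L^2}^2\le\tC\ep^2\|\bfpsi\|_{H^1}^2$; summing and using $\ep\le1$, $\Nhat\le 1$ gives \eqref{ene1}.

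For \eqref{ene2} I would run the standard one-time-derivative energy argument: test $\eqref{eq}_2$ against $\pt_t\bfpsi$ and $\eqref{eq}_3$ against $\ep^{-2}\pt_t\theta$, then add. Since $\bfpsi$ and $\theta$ vanish on $\pt\Omega$ for all $t$, so do $\pt_t\bfpsi$ and $\pt_t\theta$, and integrating the viscous and heat-conduction terms by parts produces $\frac{d}{dt}\big(\frac{\mu}{2}\|\nabla\bfpsi\|_{L^2}^2+\frac{\mu+\mup}{2}\|\div\bfpsi\|_{L^2}^2+\frac{\kappa}{2(\gamma-1)}\ep^{-2}\|\nabla\theta\|_{L^2}^2\big)$ together with the coercive quantities $\int\rho|\pt_t\bfpsi|^2$ and $\frac{\ep^{-2}}{\gamma-1}\int\rho|\pt_t\theta|^2$, which are $\ge\frac12\|\pt_t\bfpsi\|_{L^2}^2$ and $\ge\frac{\ep^{-2}}{2(\gamma-1)}\|\pt_t\theta\|_{L^2}^2$ since $\rho>\frac12$ and $\mu+\mup\ge0$. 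The delicate term is the pressure term $\ep^{-2}\int\nabla P(\rho,\fT)\cdot\pt_t\bfpsi$: I would write $P(\rho,\fT)=1+\pi$ with $\pi:=\rho\fT-1=\trho\theta+\varphi\tfT+\varphi\theta$ and integrate by parts first in space, then in time, getting $-\ep^{-2}\frac{d}{dt}\int\pi\,\div\bfpsi+\ep^{-2}\int(\fT\pt_t\varphi+\rho\pt_t\theta)\,\div\bfpsi$, using $\pt_t\pi=\fT\pt_t\varphi+\rho\pt_t\theta$. The boundary term coming from $\frac{d}{dt}\int\pi\div\bfpsi$ is estimated at the endpoints by $\ep^{-2}\|\pi\|_{L^2}\|\nabla\bfpsi\|_{L^2}$; since \Cref{basic} and \eqref{con3} give $\|(\varphi,\theta)\|_{L^2}\le C\ep A_0(0)^{1/2}$ and \Cref{elementary} gives $\|\varphi\|_{L^\infty}\le\tC\ep\Nhat^{1/2}$, we get $\ep^{-2}\|\pi\|_{L^2}\le C\ep^{-1}A_0(0)^{1/2}$, which Young's inequality absorbs into $\frac{\mu}{8}\|\nabla\bfpsi\|_{L^2}^2$ plus $C(A_1(0)+\ep^{-2}A_0(0))$. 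In the remaining integral, $\ep^{-2}\int\rho\pt_t\theta\,\div\bfpsi$ yields $\delta\ep^{-2}\|\pt_t\theta\|_{L^2}^2$ (absorbed by the $\theta$-dissipation once $\delta$ is small) plus $C_\delta\ep^{-2}\|\nabla\bfpsi\|_{L^2}^2$, which integrates to $\le C\ep^{-2}\int_0^t\|\bfpsi\|_{H^1}^2\,\md s\le C\ep^{-2}A_0(0)$; and $\ep^{-2}\int\fT\pt_t\varphi\,\div\bfpsi\le C\ep^{-2}(\|\pt_t\varphi\|_{L^2}^2+\|\nabla\bfpsi\|_{L^2}^2)$, which by \eqref{ene1} and \Cref{basic} integrates to $\le C\ep^{-2}A_0(0)+C\ep^{-2}\int_0^t\|\pt_1\varphi\|_{L^2}^2\,\md s$, exactly the term on the right of \eqref{ene2}.

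The convective and source terms are then absorbed by smallness. In the momentum test, $\int\rho(\bfu\nabla\bfpsi)\cdot\pt_t\bfpsi$ and $\int\rho(\bfpsi\nabla\tbfu)\cdot\pt_t\bfpsi$ (the latter being linear in $\bfpsi$ and $\pt_t\bfpsi$, as $\nabla\tbfu$ has the single nonzero entry $\pt_2\tu^1=1$) are bounded, via $\|\bfu\|_{L^\infty}\le C$ and Young, by $\delta\|\pt_t\bfpsi\|_{L^2}^2+C_\delta(\|\nabla\bfpsi\|_{L^2}^2+\|\bfpsi\|_{L^2}^2)$, with time integral $\le CA_0(0)$. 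In the temperature test, $\ep^{-2}\int\rho(\bfu\cdot\nabla\theta)\pt_t\theta$, $\ep^{-2}\int\rho(\bfpsi\cdot\nabla\tfT)\pt_t\theta$ (where $\|\nabla\tfT\|_{L^\infty}\le\tC\ep$ removes one power of $\ep^{-1}$) and $\ep^{-2}\int P\,\div\bfpsi\,\pt_t\theta$ give $\delta\ep^{-2}\|\pt_t\theta\|_{L^2}^2$ plus terms with time integral $\le C\ep^{-2}A_0(0)$, and the quadratic dissipation source $\int(2\mu|\Dpsi|^2+4\mu\Dtbfu:\Dpsi+\mup(\div\bfpsi)^2)\pt_t\theta$ is handled by placing the $L^\infty$ norm on one $\nabla\bfpsi$ (or $\Dtbfu$) factor, gaining a power of $\Nhat^{1/2}$ or $\ep$ from \Cref{elementary}. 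Adding the two tested identities, choosing $\delta$ small, noting $\mu+\mup\ge0$ so the dissipation controls $\frac{\mu}{2}\|\nabla\bfpsi\|_{L^2}^2$, and integrating over $[0,t]$ gives \eqref{ene2}. I expect the main obstacle to be precisely the pressure coupling: one must check that every occurrence of the singular weight $\ep^{-2}$ in front of $\nabla P$ is compensated either by a factor $\ep$ from the smallness of $\tfT-1$, $\varphi$ or $\theta$, or by the already-proved bound \eqref{ene1} for $\pt_t\varphi$, so that nothing worse than $C\big(A_1(0)+\ep^{-2}A_0(0)+\ep^{-2}\int_0^t\|\pt_1\varphi\|_{L^2}^2\,\md s\big)$ survives.
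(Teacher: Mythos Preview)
Your proposal is correct and follows essentially the same approach as the paper. The only organizational difference is that the paper first tests $\eqref{eq}_3$ with $\pt_t\theta$ alone to obtain a self-contained $\theta$-estimate, then tests $\eqref{eq}_2$ with $\pt_t\bfpsi$ (handling the pressure term by the same space--then--time integration by parts you describe, producing a right-hand side containing $C\ep^{-2}\int_0^t\|\pt_t\theta\|^2$), and finally adds $2C\ep^{-2}$ times the $\theta$-estimate to absorb that term; you instead test the temperature equation with $\ep^{-2}\pt_t\theta$ from the outset so that the absorption happens in a single step. A minor slip: the $\tfrac{1}{\gamma-1}$ factor belongs to the coercive term $\int\rho|\pt_t\theta|^2$, not to the $\kappa\|\nabla\theta\|^2$ energy, but this has no effect on the argument.
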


\begin{proof}
	It follows from $\eqref{eq}_1$, \eqref{ob-1} and \Cref{Poin} that
	\begin{equation}
		\begin{aligned}
			\|\pt_t\varphi\|_{L^2}\leq& \|\tbfu\cdot\nabla\varphi\|_{L^2}
			+(\|\nabla\varphi\|_{L^\infty}+\|\nabla\trho\|_{L^\infty})\|\bfpsi\|_{L^2}+\|\rho\|_{L^\infty}\|\nabla\bfpsi\|_{L^2}\\
			\leq&\|\pt_{1}\varphi\|_{L^2}+ C\|\bfpsi\|_{H^1},    \label{pt1}
		\end{aligned}
	\end{equation}
	where the fact that $\tu^2=0$ has been used. Integrating \eqref{pt1} over the time interval $[0,t]$ gives \eqref{ene1}.
	
	Next, multiplying $\eqref{eq}_3$ by $\pt_t \theta$, we obtain, by using the method of integration by parts, that
	\begin{equation}
		\begin{aligned}
			&\frac{d}{dt}(\frac{\kappa}{2}\|\nabla\theta\|^2_{L^2})+\int\frac{\rho}{\gamma-1} (\pt_t\theta)^2\md\bfx\\
			\leq&\delta\|\pt_t\theta\|^2_{L^2}
			+C(1+\frac{1}{\delta})\big(\|\bfpsi\|^2_{H^1}+\|\theta\|^2_{H^1}
			+\ep^2(1+\|\nabla\bfpsi\|^2_{L^\infty})\|\nabla\bfpsi\|^2_{L^2}\big)\\
			\leq &\delta\|\pt_t\theta\|^2_{L^2}+C(1+\frac{1}{\delta})\big(\|\bfpsi\|^2_{H^1}+\|\theta\|^2_{H^1}\big),
			\quad\quad \forall\,\delta>0, \label{pt2}
		\end{aligned}
	\end{equation}
	where we have used the Young inequality in the first inequality, and we have used \eqref{ob-1} in the second inequality. Integrating \eqref{pt2} over the time interval $[0,t]$ and choosing $\delta$ suitably small, we get
	\begin{equation}
		\begin{aligned}
			&\|\nabla \theta(t)\|^2_{L^2}+\int_{0}^{t}\|\pt_t \theta(s)\|^2_{L^2}\md s \\
			\leq& C\|\nabla \theta_0\|^2_{L^2}
			+C\int_{0}^{t}\big(\|\bfpsi(s)\|^2_{H^1}+\|\theta(s)\|^2_{H^1}\big)\md s,\quad\forall t\in[0,T].   \label{tempene}
		\end{aligned}
	\end{equation}
	Similarly, multiplying $\pt_t \bfpsi$ on $\eqref{eq}_2$, we obtain, by using the method of integration by parts, that
	\begin{equation}
		\begin{aligned}
			&\frac{d}{dt}\big(\frac{\mu}{2}\|\nabla\bfpsi\|^2_{L^2}+\frac{\mu+\mup}{2}\|\div\bfpsi\|^2_{L^2}\big)
			+\int\rho (\pt_t\bfpsi)^2\md\bfx\\
			\leq & \delta\|\pt_t\bfpsi\|^2_{L^2}+C(1+\frac{1}{\delta})\big(\|\bfpsi\|^2_{H^1}+\ep^{-2}\int\nabla P(\rho,\fT)\pt_t\bfpsi \md\bfx\big), \quad \forall\,\delta>0.     \label{pt3}
		\end{aligned}
	\end{equation}
	where we have used \eqref{ob-1} and the Young inequality.
	Note that 
	\begin{equation} 
		\begin{aligned}
			&\ep^{-2}\int\nabla P(\rho,\tfT)\pt_t\bfpsi \md\bfx\\
			=&\ep^{-2}\int\nabla \big(P(\rho,\fT)-P(\trho,\tfT)\big)\pt_t\bfpsi \md\bfx
			=-\ep^{-2}\int (\rho\fT-\trho\tfT)\pt_t\div\bfpsi \md\bfx\\
			=&-\ep^{-2}\pt_t\big(\int(\rho\fT-\trho\tfT)\div\bfpsi \md\bfx\big)
			+\ep^{-2}\int\pt_t(\rho\fT)\div\bfpsi \md\bfx\\
			=&-\ep^{-2}\pt_t\big(\int(\fT\varphi+\trho\theta)\div\bfpsi \md\bfx\big)
			+\ep^{-2}\int(\fT\pt_t\varphi+\rho\pt_t\theta)\div\bfpsi \md\bfx\\
			\leq&-\ep^{-2}\pt_t\big(\int(\tfT\varphi+\theta\varphi+\trho\theta)\div\bfpsi \md\bfx\big)
			+C\ep^{-2}\big(\|\pt_t\varphi\|^2_{L^2}
			+\|\pt_t\theta\|^2_{L^2}+\|\bfpsi\|^2_{H^1}\big).      \label{pt4}
		\end{aligned}
	\end{equation}
	The Young inequality gives 
	\begin{equation}\label{pt5}
		\begin{aligned}
		&\ep^{-2}\int(\tfT\varphi+\theta\varphi+\trho\theta)\div\bfpsi \md\bfx
		\leq \delta\|\nabla\bfpsi\|^2_{L^2}
		+C(1+\frac{1}{\delta})\ep^{-4}\big(\|\varphi\|^2_{L^2}+\|\theta\|^2_{L^2}\big),
		\quad\forall \delta>0,     \\    
		&\ep^{-2}\int(\tfT\varphi_0+\theta_0\varphi_0 +\trho\theta_0)\div\bfpsi_0 \md\bfx
		\leq C\|\nabla\bfpsi_0\|^2_{L^2}
		+C\ep^{-4}\big(\|\varphi_0\|^2_{L^2}+\|\theta_0\|^2_{L^2}\big).        
		\end{aligned}
	\end{equation}
	Therefore, based on \eqref{pt4}--\eqref{pt5}, integrating \eqref{pt3} over the time interval $[0,t]$, and choosing $\delta$ small enough, we have
	\begin{equation}
		\begin{aligned}
			&\|\nabla \bfpsi(t)\|^2_{L^2}+\int_{0}^{t}\|\pt_t \bfpsi(s)\|^2_{L^2}\md s\\
			\leq &C\ep^{-4}\big(\|(\varphi,\theta)(t)\|^2_{L^2}
			+\|(\varphi_0,\theta_0)\|^2_{L^2}\big)+C\|\nabla \bfpsi_0\|^2_{L^2}\\
			&+C\ep^{-2}\int_{0}^{t}\big(\|\pt_t\varphi\|^2_{L^2}
			+\|\pt_t\theta\|^2_{L^2}+\|\bfpsi\|^2_{H^1}\big)\md s,
			\quad\forall t\in[0,T].\label{tempene3} 
		\end{aligned}
	\end{equation}
	
	Finally, adding \eqref{tempene} multiplied by $2C\ep^{-2}$ to \eqref{tempene3} derives
	\begin{equation}
		\begin{aligned}
			A_1(t)\leq& CA_1(0)+C\ep^{-2}A_0(t)+C\ep^{-2}\int_{0}^{t}\|\pt_t\varphi\|^2_{L^2}\md s\\
			\leq& CA_1(0)+C\ep^{-2}A_0(0)+C\ep^{-2}\int_{0}^{t}\|\pt_1\varphi\|^2_{L^2}\md s,
			\quad\forall t\in[0,T],
		\end{aligned}
	\end{equation}
	where \eqref{eneA0} and \eqref{ene1} have been used in the second inequality.
	
	The proof is completed.
\end{proof}

Next, the equations $\eqref{eq}_1$ and $\eqref{eq}_2$ can be rewritten as
\begin{equation}
	\pt_t\varphi+\bfu\cdot\nabla\varphi+\div\bfpsi=f_1,   \label{eqf1}
\end{equation}
and
\begin{equation}
	\rho\big(\pt_t\bfpsi+\bfu\cdot\nabla\bfpsi+\bfpsi\cdot\nabla\tbfu\big)
	+\ep^{-2}\big(\nabla\varphi+\nabla\theta\big)
	-\mu\Delta\bfpsi-(\mu+\mup)\nabla\div\bfpsi=\ep^{-2}\nabla f_2,\label{eqf2}
\end{equation}
where 
\begin{equation}
	f_1=-(\rho-1)\div\bfpsi-\bfpsi\cdot\nabla\trho, \quad\quad
	f_2=-\big((\trho-1)\theta+(\tfT-1)\varphi+\varphi\theta\big). \notag
\end{equation}

The estimates for $\sup_{t\in [0,T]}\ep^{-2}\|\nabla\varphi(t)\|^2_{L^2}$
and $\int_{0}^{T}\|\nabla\div \bfpsi(t)\|^2_{L^2}\md t$ are derived as follows.  

\begin{lemma}  \label{lemma2}
	Suppose that \eqref{chi} holds, and suppose that $\Re<\Re_0$ and $\ep<\vep_1$ as in \Cref{basic}. Then, it holds that
\begin{equation} \label{ene3}
	\begin{aligned}
		&\ep^{-2}\|\pt_{1}\varphi(t)\|^2_{L^2}+\|\pt_{1}\bfpsi(t)\|^2_{L^2}
		+\int_{0}^{t}\big(\|\nabla\pt_{1}\bfpsi(s)\|^2_{L^2}
		+\|\pt_{1}\div\bfpsi(s)\|^2_{L^2}\big)\md s\\	
		\leq& C\big(\ep^{-2}\|\pt_{1}\varphi_0\|^2_{L^2}+\|\pt_{1}\bfpsi_0\|^2_{L^2}\big)
		+C\ep^{-2}\int_{0}^{t} \|\pt_{1}\varphi(s)\|^2_{L^2}\md s	\\
		&+C\int_{0}^{t}\big(\ep^{-4}\|\theta(s)\|^2_{H^1}+\ep^{-2}\|\bfpsi(s)\|^2_{H^1}		+\|\pt_t\bfpsi(s)\|^2_{L^2}\big)\md s,  \quad\quad\forall t\in [0,T].
	\end{aligned}
\end{equation}
\end{lemma}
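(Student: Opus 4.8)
The plan is to differentiate the reformulated equations \eqref{eqf1}--\eqref{eqf2} in the tangential variable $x_1$ and to run a coupled energy estimate for the pair $(\pt_1\varphi,\pt_1\bfpsi)$ of Matsumura--Nishida type, with the drift operator $\bfu\cdot\nabla$ treated by integration by parts. Concretely, I would test the $\pt_1$-differentiated momentum equation \eqref{eqf2} against $\pt_1\bfpsi$ and the $\pt_1$-differentiated continuity equation \eqref{eqf1} against $\ep^{-2}\pt_1\varphi$, and then add the two resulting identities so that the opposite cross terms $\mp\,\ep^{-2}\int\pt_1\varphi\,\pt_1\div\bfpsi\,\md\bfx$ cancel exactly. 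Throughout, every background-variable coefficient (using $\pt_1\trho=\pt_1\tfT=0$, $\pt_1\tbfu=0$) and every nonlinear contribution will be controlled via the pointwise bounds of \Cref{elementary}.

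In the momentum step, applying $\pt_1$ to \eqref{eqf2} and testing against $\pt_1\bfpsi$, I would use the original continuity equation $\eqref{eq}_1$ together with $u^2|_{\pt\Omega}=0$ and $\pt_1\bfpsi|_{\pt\Omega}=0$ to recombine the transport part into $\tfrac12\tfrac{d}{dt}\int\rho|\pt_1\bfpsi|^2\,\md\bfx$, and integrate by parts in the Lam\'e and pressure terms. Modulo terms that Young's inequality puts into $\delta\|\nabla\pt_1\bfpsi\|_{L^2}^2+\delta\|\pt_1\div\bfpsi\|_{L^2}^2$ (absorbable by the dissipation, since also $\|\pt_1\div\bfpsi\|_{L^2}\leq\|\nabla\pt_1\bfpsi\|_{L^2}$), this gives
\[
\tfrac{d}{dt}\Big(\tfrac12\int\rho|\pt_1\bfpsi|^2\,\md\bfx\Big)
+\tfrac{\mu}{2}\|\nabla\pt_1\bfpsi\|_{L^2}^2+(\mu+\mup)\|\pt_1\div\bfpsi\|_{L^2}^2
-\ep^{-2}\int\pt_1\varphi\,\pt_1\div\bfpsi\,\md\bfx
\leq C\big(\ep^{-4}\|\theta\|_{H^1}^2+\ep^{-2}\|\bfpsi\|_{H^1}^2+\|\pt_t\bfpsi\|_{L^2}^2+\ep^{-2}\|\pt_1\varphi\|_{L^2}^2\big).
\]
Here the shear term $\int\rho(\pt_1\bfpsi\cdot\nabla\tbfu)\cdot\pt_1\bfpsi$ is bounded by $C\|\nabla\bfpsi\|_{L^2}^2$; the commutator terms carry a factor $\pt_1\rho=\pt_1\varphi$ (with $\|\pt_1\varphi\|_{L^\infty}\leq\tC\Nhat^{1/2}\leq\tC$) or $\pt_1\bfu=\pt_1\bfpsi$ (with $\|\pt_1\bfpsi\|_{L^\infty}\leq\tC\ep^{-1}\Nhat^{1/2}$); the forcing obeys $|\pt_1 f_2|\leq C\ep(|\pt_1\varphi|+|\pt_1\theta|)$ by \eqref{ob-0}--\eqref{ob-1}, so that $\ep^{-2}\int\nabla\pt_1 f_2\cdot\pt_1\bfpsi=-\ep^{-2}\int\pt_1 f_2\,\pt_1\div\bfpsi$ is controlled after Young's inequality, and likewise $\ep^{-2}\int\pt_1\nabla\theta\cdot\pt_1\bfpsi=-\ep^{-2}\int\pt_1\theta\,\pt_1\div\bfpsi$.

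In the density step, differentiating \eqref{eqf1} gives $\pt_t\pt_1\varphi+\bfu\cdot\nabla\pt_1\varphi+\pt_1\bfu\cdot\nabla\varphi+\pt_1\div\bfpsi=\pt_1 f_1$, and testing against $\ep^{-2}\pt_1\varphi$ produces $\tfrac{\ep^{-2}}{2}\tfrac{d}{dt}\|\pt_1\varphi\|_{L^2}^2+\ep^{-2}\int\pt_1\varphi\,\pt_1\div\bfpsi\,\md\bfx$ plus error terms. I expect the main obstacle to be the drift term: splitting $\bfu=\tbfu+\bfpsi$, the contribution of $\tbfu=(x_2,0)$ integrates to zero after integration by parts because $\tbfu$ is horizontal and $x_1$-independent, while the contribution of $\bfpsi$ equals $\tfrac{\ep^{-2}}{2}\int\div\bfpsi\,(\pt_1\varphi)^2\,\md\bfx$. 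This term cannot be closed by placing $\div\bfpsi$ in $L^\infty$, since \eqref{ob-1} only gives $\|\div\bfpsi\|_{L^\infty}\leq\tC\ep^{-1}\Nhat^{1/2}$, which loses a factor $\ep^{-1}$ relative to the right-hand side of \eqref{ene3}; instead I would bound it by $\tfrac{\ep^{-2}}{2}\|\div\bfpsi\|_{L^2}\|\pt_1\varphi\|_{L^\infty}\|\pt_1\varphi\|_{L^2}$ and use the uniform smallness $\|\pt_1\varphi\|_{L^\infty}\leq\|\nabla\varphi\|_{L^\infty}\leq\tC\Nhat^{1/2}\leq\tC$ from \eqref{ob-1} (recall $\Nhat\leq1$), which reduces it to $C\ep^{-2}(\|\bfpsi\|_{H^1}^2+\|\pt_1\varphi\|_{L^2}^2)$. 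The term $\ep^{-2}\int(\pt_1\bfpsi\cdot\nabla\varphi)\pt_1\varphi$ and the forcing $\ep^{-2}\int\pt_1 f_1\,\pt_1\varphi$ — with $f_1=-(\rho-1)\div\bfpsi-\bfpsi\cdot\nabla\trho$, whose $x_1$-derivative again contains $\div\bfpsi$, $\pt_1\div\bfpsi$ or $\pt_2\trho=O(\ep)$ — are handled by the same splitting together with \Cref{elementary}. Adding the two identities cancels the cross terms; integrating over $[0,t]$, using $\tfrac12\leq\rho\leq C$ from \Cref{elementary} to compare $\int\rho|\pt_1\bfpsi|^2\,\md\bfx$ with $\|\pt_1\bfpsi\|_{L^2}^2$, and estimating the initial data by $C(\ep^{-2}\|\pt_1\varphi_0\|_{L^2}^2+\|\pt_1\bfpsi_0\|_{L^2}^2)$, then yields \eqref{ene3}.
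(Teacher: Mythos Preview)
Your proposal is correct and follows essentially the same route as the paper: differentiate in $x_1$ (using $\pt_1\tbfu=\pt_1\trho=\pt_1\tfT=0$), test against $(\ep^{-2}\pt_1\varphi,\pt_1\bfpsi)$ so that the cross terms $\mp\ep^{-2}\int\pt_1\varphi\,\pt_1\div\bfpsi\,\md\bfx$ cancel, and close by absorbing the $O(\ep)$-weighted $\pt_1\div\bfpsi$ contributions into the dissipation. In particular, your resolution of the drift term --- bounding $\tfrac{\ep^{-2}}{2}\int\div\bfpsi\,(\pt_1\varphi)^2\,\md\bfx$ by placing one factor of $\pt_1\varphi$ in $L^\infty$ via $\|\nabla\varphi\|_{L^\infty}\leq\tC$ from \eqref{ob-1} rather than $\div\bfpsi$ --- is exactly the device the paper uses (see the first line on the right of \eqref{div1}).
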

\begin{proof}
	Noting that $\pt_{1}\trho=0$ and $\pt_{1}\tu=0$, we first differentiate both $\eqref{eq}_1$ and $\eqref{eq}_2$ in $x_1$ to get
	\begin{equation}
		\pt_t\pt_{1}\varphi+\bfu\cdot\nabla\pt_{1}\varphi+\div(\pt_{1}\bfpsi)
		=\pt_{1}f_1-\pt_{1}\bfpsi\cdot\nabla\varphi, \label{pertur6} 
	\end{equation}
	and	
	\begin{equation}
		\begin{aligned}
			&\rho(\pt_t\pt_{1}\bfpsi+\bfu\cdot\nabla \pt_{1}\bfpsi)
			+\ep^{-2}\nabla \big(\pt_{1}\varphi+\pt_{1}\theta)-\mu\Delta \pt_{1}\bfpsi-(\mu+\mup)\nabla\pt_{1}\div \bfpsi\\
			=&\nabla\pt_1f_2+\bfR_1+\bfR_2,\label{pertur7}
		\end{aligned}
	\end{equation}
	where
	\begin{equation}
		\quad \bfR_1=-\pt_{1}\varphi(\pt_t\bfpsi+\bfu\cdot\nabla\bfpsi+\bfpsi\cdot\nabla\tbfu),\quad \bfR_2=-\rho\pt_{1}\bfpsi\cdot\nabla(\tbfu+\bfpsi). \notag
	\end{equation}
	It follows from \eqref{ob-1} and \eqref{def:N} that
	\begin{equation} \label{Landou1}
		\begin{aligned}
		&\bfR_1=O(1)|\pt_1\varphi|(|\pt_t\bfpsi|+|\nabla\bfpsi|+|\bfpsi|)
			=O(1)(|\pt_t\bfpsi|+|\nabla\bfpsi|+|\bfpsi|),\\
		&\bfR_2=O(1)|\nabla\bfpsi|(1+|\nabla\bfpsi|)
			=O(\ep^{-2})|\nabla\bfpsi|. 
		\end{aligned}
	\end{equation}
	Throughout this paper, the Landau notation $O(\ep^k)$ is used to indicate a function whose absolute value remains uniformly bounded by $C\ep^k$ for a integer $k$ and a positive constant $C$ independent of $\ep$ and $T$. Similarly, we have 
	\begin{equation}\label{Landou2}
		\begin{aligned}
		\pt_{1}f_1=&O(1)(|\trho-1|+|\varphi|)|\pt_{1}\div\bfpsi|
	+O(1)|\pt_1\varphi||\div\bfpsi|+O(1)|\nabla\trho||\pt_{1}\bfpsi|\\
		=&O(\ep)|\pt_{1}\div\bfpsi| + O(1)|\nabla\bfpsi|,\\
		\pt_{1}f_2=&O(1)(|\trho-1|+|\tfT-1|
		+|\varphi|+|\theta|)(|\pt_1\varphi|+|\pt_1\theta|)\\
		=&O(\ep)(|\pt_1\varphi|+|\pt_1\theta|).
			\end{aligned}
	\end{equation}
    It is derived from \eqref{Landou1}--\eqref{Landou2} that
    \begin{equation}\label{est-Landou}
    	\begin{aligned}
    			&\|\bfR_1\|^2_{L^2}+\|\bfR_2\|^2_{L^2}
    		\leq C\ep^{-2}\|\bfpsi\|^2_{H^1}
    		+C\ep^2\|\pt_{t}\bfpsi\|^2_{L^2},\\
    		&\|\pt_{1}f_1\|^2_{L^2}
    		\leq C\ep^2\|\pt_1\nabla\bfpsi\|^2_{L^2}
    		+C\|\bfpsi\|^2_{H^1}, \quad
    		\|\pt_{1}f_2\|^2_{L^2}
    		\leq C\ep^2\|\pt_1\varphi\|^2_{L^2}
    		+C\ep^2\|\theta\|^2_{H^1}.
    	\end{aligned}
    \end{equation}
	Thus, based on \eqref{ob-0} and \eqref{ob-1}, adding \eqref{pertur6} multiplied by $\ep^{-2}\pt_{1}\varphi$ to \eqref{pertur7} multiplied by $\pt_{1}\bfpsi$, we obtain, by applying the method of integration by parts and using the Young inequality, that
	\begin{equation}
		\begin{aligned}
			&\frac{\md}{\md t}
			\big(\frac{1}{2}\int\ep^{-2}|\pt_{1}\varphi|^2+\rho|\pt_{1}\bfpsi|^2\md\bfx\big)
			+\mu\|\nabla\pt_{1}\bfpsi\|^2_{L^2}+(\mu+\mup)\|\pt_{1}\div\bfpsi\|^2_{L^2}    \\
			\leq&  \frac{1}{2} \ep^{-2}\|\pt_{1}\varphi\|_{L^\infty}\|\div\bfpsi\|_{L^2}\|\pt_{1}\varphi\|_{L^2}
			+\ep^{-2}\|\pt_1f_1\|_{L^2}\|\pt_1\varphi\|_{L^2}\\
			&+\ep^{-2}\|\nabla\varphi\|_{L^\infty}\|\pt_1\bfpsi\|_{L^2}\|\pt_1\varphi\|_{L^2} 
			+\ep^{-2}\big(\|\pt_1\theta\|_{L^2}+\|\pt_1f_2\|_{L^2}\big)\|\div\pt_1\bfpsi\|_{L^2}\\
			&+\big(\|\bfR_1\|_{L^2}+\|\bfR_2\|_{L^2}\big)\|\pt_1\bfpsi\|_{L^2}
			\\
			\leq& \frac{\mu}{4}\|\nabla\pt_{1}\bfpsi\|^2_{L^2}+C(1+\frac{1}{\delta})\ep^{-2}\|\pt_1\varphi\|^2_{L^2}
			+\delta\ep^{-2}\|\pt_1f_1\|^2_{L^2}	\\
			&+C\big(\ep^{-4}\|\pt_1\theta\|^2_{L^2}+\ep^{-4}\|\pt_1f_2\|^2_{L^2}
			+\|\bfR_1\|^2_{L^2}+\|\bfR_2\|^2_{L^2}+\ep^{-2}\|\nabla\bfpsi\|^2_{L^2}\big), \quad \forall \delta>0. 
				\end{aligned}
		\end{equation}
		Substitute \eqref{est-Landou} into the above inequality and  let $\delta$ be a small positive constant satisfying $C\delta\leq \frac{\mu}{4}$, yielding
			\begin{equation}
				\begin{aligned}
		&\frac{\md}{\md t}
	\big(\frac{1}{2}\int\ep^{-2}|\pt_{1}\varphi|^2+\rho|\pt_{1}\bfpsi|^2\md\bfx\big)
	+\frac{\mu}{2}\|\nabla\pt_{1}\bfpsi\|^2_{L^2}+(\mu+\mup)\|\pt_{1}\div\bfpsi\|^2_{L^2} \\
			\leq&  		C\ep^{-2}\|\pt_1\varphi\|^2_{L^2}
			+C\big(\ep^{-4}\|\theta\|^2_{H^1}
			+\|\pt_t\bfpsi\|^2_{L^2}+\ep^{-2}\|\bfpsi\|^2_{H^1}\big),    \quad \quad \forall \delta>0.   \label{div1}
		\end{aligned}
	\end{equation}
	Integrating \eqref{div1} over the time interval $[0,t]$ leads to \eqref{ene3}. 

The proof is completed.
\end{proof}

	For convenience, in what follows we adopt the abbreviated notations
	\begin{equation} \label{def:G}
		G_1:=\tfT\pt_{2}\varphi+\varphi\pt_{2}\tfT,\quad 	G_2:=\theta\pt_{2}\varphi+\varphi\pt_{2}\theta+\theta\pt_{2}\trho+\trho\pt_{2}\theta.  
	\end{equation}	
	The estimate for $\ep^{-4}\int_{0}^{t}\|G_1\|^2_{L^2}\md s$ is obtained as follows.
	\begin{lemma}\label{lemma3}
			Suppose that \eqref{chi} holds, and suppose that $\Re<\Re_0$ and $\ep<\vep_1$ as in \Cref{basic}. Then, it holds that
		\begin{equation}\label{ene4}
		\begin{aligned}
			&\ep^{-2}\|\pt_{2}\varphi(t)\|^2_{L^2}
			+\int_{0}^{t}\|\pt_{2}\div\bfpsi(s)\|^2_{L^2}\md s
			+ \int_{0}^{t}\ep^{-4}\|G_1(s)\|^2_{L^2}\md s\\
			\leq& C\ep^{-2}\big(\|\varphi_0\|^2_{H^1}+\|\varphi(t)\|^2_{L^2}\big)
			+C_1\int_{0}^{t}\|\nabla\pt_{1}\bfpsi(s)\|^2_{L^2}\md s\\
			&+  C\int_{0}^{t}\bigg(
			\|\pt_t\bfpsi(s)\|^2_{L^2}
			+\|\bfpsi(s)\|^2_{H^1}+\ep^{-4}\|\theta(s)\|^2_{H^1}
			+\ep^{2}\|\pt_t\varphi(s)\|^2_{L^2}	\bigg)\md s,
			\quad \forall t\in[0,T], 
		\end{aligned}
	\end{equation}
	where $C_1$ is a positive constant independent of $\ep$ and $T$.
	\end{lemma}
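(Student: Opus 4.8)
Write $\bfpsi=(\psi^1,\psi^2)^\top$. The backbone is the algebraic fact that, since $\trho\tfT\equiv1$ by \eqref{def:Couette}, one has $\pt_2 P(\rho,\fT)=\pt_2(\rho\fT-\trho\tfT)=G_1+G_2$ with $G_1,G_2$ as in \eqref{def:G}; hence, using $\tu^2\equiv0$ (so $\bfpsi\cdot\nabla\tu^2=0$), the second component of $\eqref{eq}_2$ becomes
\begin{equation}
\rho\big(\pt_t\psi^2+\bfu\cdot\nabla\psi^2\big)+\ep^{-2}(G_1+G_2)=\mu\Delta\psi^2+(\mu+\mup)\pt_2\div\bfpsi .\notag
\end{equation}
I would also introduce $\Phi:=\tfT\varphi$, so that $G_1=\pt_2\Phi$; multiplying $\eqref{eq}_1$ by $\tfT$ and using $\tfT\trho\equiv1$ kills the linear variable-coefficient terms and gives the clean equation $\pt_t\Phi+\bfu\cdot\nabla\Phi+\div\bfpsi=h_1$, with $h_1:=-\Phi\div\bfpsi+\rho\psi^2\pt_2\tfT$. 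Differentiating this in $x_2$ (and using $\pt_2\bfu\cdot\nabla\Phi=(1+\pt_2\psi^1)\tfT\pt_1\varphi+\pt_2\psi^2 G_1$, where the ``$1$'' is the shear $\pt_2\tu^1$) produces the evolution equation $\pt_t G_1+\bfu\cdot\nabla G_1+\pt_2\div\bfpsi=-\tfT\pt_1\varphi+h_2$, with $h_2$ collecting terms controlled by $|\nabla\bfpsi|(|\pt_1\varphi|+|G_1|)$, $O(\ep)(|\nabla\bfpsi|+|\bfpsi|)$ and $|\Phi||\pt_2\div\bfpsi|$.

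The crucial step is to create damping for $G_1$. Using $\Delta\psi^2=\pt_1^2\psi^2+\pt_2\div\bfpsi-\pt_1\pt_2\psi^1$ and $2\mu+\mup\ge\mu>0$, the momentum identity above can be solved for $\pt_2\div\bfpsi$; substituting the result into the $G_1$-equation turns it into a damped transport equation,
\begin{equation}
\pt_t G_1+\bfu\cdot\nabla G_1+\frac{\ep^{-2}(1+\Phi)}{2\mu+\mup}\,G_1=\mathcal R ,\notag
\end{equation}
where $\mathcal R$ is built from $h_2$, $-\tfT\pt_1\varphi$, $-\ep^{-2}G_2$, $-\rho(\pt_t\psi^2+\bfu\cdot\nabla\psi^2)$ and $\mu\,\pt_1\omega$ with $\omega:=\pt_1\psi^2-\pt_2\psi^1$. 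I would then multiply by $\rho$ (so the convective part becomes $\tfrac12\pt_t(\rho|G_1|^2)+\tfrac12\div(\rho\bfu|G_1|^2)$ via $\eqref{eq}_1$) and test against $\ep^{-2}G_1$ — this is the ``weighted'' estimate, the transport weight $\rho$ and the pressure weight $\tfT=1/\trho$ being compatible thanks to $\trho\tfT\equiv1$. Since $\rho>\tfrac12$ and $|\Phi|\le\tC\sqrt{\Nhat}\,\ep<\tfrac12$ for $\Nhat,\ep$ small, the damping term furnishes the coercive quantity $\gtrsim\ep^{-4}\|G_1\|^2_{L^2}$, while the time derivative gives $\tfrac{\ep^{-2}}{2}\int\rho|G_1|^2$, whose value at time $t$ dominates $\ep^{-2}\|\pt_2\varphi(t)\|^2_{L^2}$ up to $C\ep^{-2}\|\varphi(t)\|^2_{L^2}$ (because $G_1=\tfT\pt_2\varphi+\varphi\pt_2\tfT$ with $\tfT>\tfrac34$ and $\|\pt_2\tfT\|_{L^\infty}\le\tC\ep$ by \Cref{elementary}). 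Finally, $\int_0^t\|\pt_2\div\bfpsi\|^2$ is read off the momentum identity, since there $\pt_2\div\bfpsi$ is bounded by $\ep^{-2}\|G_1\|+\ep^{-2}\|G_2\|+\|\pt_t\bfpsi\|+\|\bfpsi\|_{H^1}+\|\nabla\pt_1\bfpsi\|$.

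The remainder terms in $\int\rho\,\mathcal R\,G_1$ are then estimated with \Cref{elementary}, \Cref{Poin}, \Cref{GN lemma} and the smallness of $\Re$, $\ep$, $\Nhat$. The $\mu\,\pt_1\omega$-contribution is split by Young's inequality as $\delta\,\ep^{-4}\|G_1\|^2_{L^2}+C_1\|\pt_1\omega\|^2_{L^2}\le\delta\,\ep^{-4}\|G_1\|^2_{L^2}+C_1\|\nabla\pt_1\bfpsi\|^2_{L^2}$ — the origin of the distinguished constant $C_1$, which is later paid back against the dissipation $\int_0^t\|\nabla\pt_1\bfpsi\|^2$ produced by \Cref{lemma2}; the $\rho\pt_t\psi^2$ and $\rho\bfu\cdot\nabla\psi^2$ terms give $\|\pt_t\bfpsi\|^2_{L^2}$ and $\|\bfpsi\|^2_{H^1}$; the $G_2$-terms give $\ep^{-4}\|G_2\|^2_{L^2}\le C\ep^{-4}\|\theta\|^2_{H^1}+C\Nhat\,\ep^{-2}\|G_1\|^2_{L^2}$ (the latter absorbed by smallness); the convection term $-\tfrac12\int\div\bfpsi\,|G_1|^2$ and the cubic term $\int\pt_2\psi^2|G_1|^2$ are integrated by parts (no boundary term, since $\bfpsi|_{\pt\Omega}=0$) and absorbed by smallness of $\Nhat$; and the genuinely low-order pieces — the streamwise forcing $-\tfT\pt_1\varphi=-\pt_1\Phi$ paired against $G_1=\pt_2\Phi$, and everything proportional to $\varphi$ — are reorganised using $\eqref{eq}_1$ (i.e. $\pt_t\varphi=-\div\bfpsi+O(\ep)|\bfpsi|+\text{quadratic}$) and, where unavoidable, kept inside exact time derivatives, so that after time integration they account only for the $C\ep^{-2}\|\varphi_0\|^2_{H^1}$, $C\ep^{-2}\|\varphi(t)\|^2_{L^2}$ and $C\ep^2\int_0^t\|\pt_t\varphi\|^2$ terms on the right of \eqref{ene4}. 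Integrating in time and absorbing the $\delta\,\ep^{-4}\|G_1\|^2$- and $C\Nhat\,\ep^{-2}\|G_1\|^2$-type terms into the left then yields \eqref{ene4}.

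The principal difficulty is exactly the obstruction flagged in the introduction: there is no Poincar\'e inequality for $\varphi$ on the unbounded channel, so no dissipation of $\varphi$ itself can be generated, and every occurrence of a bare $\|\varphi\|_{L^2}$ must be either produced inside an exact time derivative — which is the whole reason for working with the weighted variable $\Phi=\tfT\varphi$ (and with the transport weight $\rho$, using $\trho\tfT\equiv1$) rather than with $\pt_2\varphi$ and a flat energy — or else removed by smallness. Closely related is the requirement that the viscous coupling $\ep^{-2}\int\big(\mu\Delta\psi^2+(\mu+\mup)\pt_2\div\bfpsi\big)G_1$ be reorganised through $\Delta\psi^2=\pt_1^2\psi^2+\pt_2\div\bfpsi-\pt_1\pt_2\psi^1$ and the momentum identity so that only the (available) $\nabla\pt_1\bfpsi$ enters and never the (at this order unavailable) $\nabla^2\bfpsi$, and that the shear term $-\pt_1\Phi$ be tamed via the residual $\pt_t\varphi$-term. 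Keeping careful track of the $\ep$-powers so that all the $\delta\,\ep^{-4}\|G_1\|^2$- and $\Nhat\,\ep^{-2}\|G_1\|^2$-type remainders really are absorbable — and so that no uncontrolled $\int_0^t\|\varphi\|^2$ survives — is where the bulk of the effort will go.
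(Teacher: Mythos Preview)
Your approach is essentially equivalent to the paper's, but packaged more transparently. The paper works directly with $\pt_2\varphi$: it isolates the normal--normal component in the momentum equation to get $-(2\mu+\mup)\pt_2^2\psi^2+\ep^{-2}G_1=R_3$, differentiates the continuity equation in $x_2$ to get $\pt_t\pt_2\varphi+\bfu\cdot\nabla\pt_2\varphi+\pt_1\varphi+\rho\pt_2^2\psi^2=R_4$, then multiplies the first by $\ep^{-2}\rho G_1$ and the second by $(2\mu+\mup)\ep^{-2}G_1$ so the $\pt_2^2\psi^2$ terms cancel. The cross term $\int\pt_t\pt_2\varphi\,G_1$ is then laboriously rewritten (via the identity $\pt_2\varphi=\tfT^{-1}G_1-\tfT^{-1}\pt_2\tfT\,\varphi$) as $\frac{d}{dt}\int(\tfrac12\tfT|\pt_2\varphi|^2+\varphi\pt_2\tfT\,\pt_2\varphi+\tfrac{\pt_2\tfT}{2\tfT}\varphi^2)$ plus a remainder. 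Your substitution $\Phi=\tfT\varphi$, which makes $G_1=\pt_2\Phi$ and turns the continuity equation into $\pt_t\Phi+\bfu\cdot\nabla\Phi+\div\bfpsi=h_1$ with \emph{constant} leading coefficients (because $\trho\tfT\equiv1$), short-circuits all of this: after differentiating in $x_2$, substituting $\pt_2\div\bfpsi$ from the momentum identity, and testing against $\rho\,\ep^{-2}G_1$, the time derivative is simply $\tfrac{\ep^{-2}}{2}\tfrac{d}{dt}\int\rho|G_1|^2$ --- which expands exactly to the paper's quadratic form (to leading order). So the two routes coincide algebraically; yours makes the damped--transport structure for $G_1$ explicit from the start.

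One caveat: your handling of the shear forcing $-\tfT\pt_1\varphi=-\pt_1\Phi$ is vague. The pairing $-\ep^{-2}\int\rho\,\pt_1\Phi\,\pt_2\Phi$ does \emph{not} become an exact time derivative, nor does $\eqref{eq}_1$ convert it into one; the paper simply treats this contribution by Young's inequality, which produces a term of order $\|\pt_1\varphi\|_{L^2}^2$ (or $\ep^2\|\pt_1\varphi\|_{L^2}^2$, via the convection manipulation). This is not literally listed on the right of \eqref{ene4}, but it is harmless because it is carried along into Corollary~3.1 (where $\ep^{-2}\int_0^t\|\pt_1\varphi\|^2$ already appears) and then closed by the Stokes estimate in \Cref{lemma4}. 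So drop the claim that this term is ``kept inside exact time derivatives'' and just bound it by Young; the rest of your plan goes through.
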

	
	\begin{proof}
	We first obtain from a direct calculation that
	\begin{equation}
		\mu\Delta \psi^2+(\mu+\mup)\pt_{2}\div \bfpsi
		=(2\mu+\mup)\pt^2_{2}\psi^2+\mu\pt_{1}(\pt_{1}\psi^2-\pt_{2}\psi^1)
		+(\mu+\mup)\pt_{2}\pt_{1}\psi^1,\label{eq:normal1}
	\end{equation}
	and
	\begin{equation} \label{eq:normal2}
		\pt_{2} P(\rho,\fT)=\pt_{2}\big(P(\rho,\fT)-P(\trho,\tfT)\big)              
		=G_1+G_2. 
	\end{equation}
	Substituting the above equations \eqref{eq:normal1} and \eqref{eq:normal2} into $\eqref{eq}_2$ gives
	\begin{equation}
		-(2\mu+\mup)\pt^2_{2}\psi^2
		+\ep^{-2}G_1=R_3,\label{eqnormal2}
	\end{equation}
	where
	\begin{equation}\label{def:R3}
			R_3=\mu\pt_{1}(\pt_{1}\psi^2-\pt_{2}\psi^1)+(\mu+\mup)\pt_{2}\pt_{1}\psi^1
			-\rho\big(\pt_t\psi^2+\bfu\cdot\nabla\psi^2+\bfpsi\cdot\nabla\tu^2\big)-\ep^{-2}G_2. 
	\end{equation}

Next, noting that $\pt_{2}\tbfu\cdot\nabla\varphi=\pt_{2}\tu^1\pt_{1}\varphi=\pt_{1}\varphi$,
differentiating the equation $\eqref{eq}_1$ in $x_2$ leads to that
\begin{equation}
	\pt_t\pt_{2}\varphi+\bfu\cdot\nabla\pt_{2}\varphi+\pt_{1}\varphi
	+\rho\pt^2_{2}\psi^2 = R_4,   \label{eqnormal1}
\end{equation}
where 
\begin{equation}\label{def:R4}
		R_4=-\pt_2\bfpsi\cdot\nabla\rho-\bfpsi\cdot\nabla\pt_2\trho-\pt_2\rho\div\bfpsi-\rho\pt_{2}\pt_{1}\psi^1.   
\end{equation}
	
	Adding \eqref{eqnormal1} multiplied by $(2\mu+\mup)\ep^{-2}G_1$ to \eqref{eqnormal2} multiplied by $\ep^{-2}\rho G_1$, we integrate the resulting equation over $\Omega$ to obtain 
	\begin{equation} \label{eq:inteG1}
		\begin{aligned}
				&(2\mu+\mup)\ep^{-2}\int \pt_t\pt_{2}\varphi G_1 \md\bfx
			+\ep^{-4}\int \rho G_1^2 \md\bfx\\
			=& 	(2\mu+\mup)\ep^{-2}\bigg[-\int (\tbfu\cdot\nabla\pt_{2}\varphi) G_1 \md\bfx
			-\int (\bfpsi\cdot\nabla\pt_{2}\varphi) G_1 \md\bfx
			+\int G_1(R_3+R_4)\md\bfx\bigg].
		\end{aligned}
	\end{equation}
	
	The terms of integral on the left-hand side of the equation \eqref{eq:inteG1} are estimated as follows. It follows from \eqref{ob-1} that
	\begin{equation}\label{est:H1-1}
		\ep^{-4}\int \rho G_1^2 \md\bfx 
		\geq \ep^{-4}(\inf_{(x,t)\in \Omega\times[0,T]}\rho)\int  G_1^2 \md\bfx
		\geq \frac{1}{2}\ep^{-4}\|G_1\|^2_{L^2}.
	\end{equation}
	By the method of integration by parts, it holds that
		\begin{equation}\label{eq:temp-H1}
		\begin{aligned}
     &\int \pt_t\pt_{2}\varphi G_1\md\bfx =\int \pt_t\pt_{2}\varphi
     (\tfT\pt_{2}\varphi+\varphi\pt_{2}\tfT)\md\bfx\\
     =&\frac{d}{dt}\int\big( \frac{1}{2}\tfT|\pt_{2}\varphi|^2
     + \varphi\pt_{2}\tfT\pt_{2}\varphi \big)\md\bfx
     -\int\pt_{2}\varphi\pt_{t}\varphi\pt_{2}\tfT \md\bfx\\
     =&\frac{d}{dt}\int\big( \frac{1}{2}\tfT|\pt_{2}\varphi|^2
     + \varphi\pt_{2}\tfT\pt_{2}\varphi \big)\md\bfx
      -\int\frac{\pt_{2}\tfT}{\tfT}(\tfT\pt_{2}\varphi
      +\varphi\pt_{2}\tfT-\varphi\pt_{2}\tfT )\pt_{t}\varphi \md\bfx\\
      =&\frac{d}{dt}\int\big( \frac{1}{2}\tfT|\pt_{2}\varphi|^2
        + \varphi\pt_{2}\tfT\pt_{2}\varphi \big)\md\bfx
      -\int\big(\frac{\pt_{2}\tfT}{\tfT}G_1\pt_{t}\varphi 
      - \frac{\pt_{2}\tfT}{2\tfT}\pt_{t}(\varphi^2)   \big)\md\bfx\\
      =&\frac{d}{dt}\int\big( \frac{1}{2}\tfT|\pt_{2}\varphi|^2
        + \varphi\pt_{2}\tfT\pt_{2}\varphi 
        + \frac{\pt_{2}\tfT}{2\tfT}\varphi^2\big)\md\bfx
        - \int\big(\frac{\pt_{2}\tfT}{\tfT}G_1\pt_{t}\varphi \big) \md\bfx,
			\end{aligned}
		\end{equation}
Noting the fact from \eqref{ob-0} that 
\begin{equation}\label{fact-1}
	\|\frac{\pt_{2}\tfT}{2\tfT} \|_{L^\infty}\leq \tC\epsilon,
\end{equation}
the H\"{o}lder inequality and the Young inequality derive
		\begin{equation}\notag
	\begin{aligned}
		 -(2\mu+\mup)\ep^{-2}\int\big(\frac{\pt_{2}\tfT}{\tfT}G_1\pt_{t}\varphi \big) \md\bfx
	\leq&(2\mu+\mup)\ep^{-2}\|\frac{\pt_{2}\tfT}{\tfT}\|_{L^\infty}\|G_1\|_{L^2}\|\pt_{t}\varphi \|_{L^2}\\
	\leq&\frac{1}{16}\ep^{-4}\|G_1\|^2_{L^2}+ C\ep^2\|\pt_{t}\varphi \|^2_{L^2},
	\end{aligned}
\end{equation}
Substituting the above inequality into \eqref{eq:temp-H1} gives
		\begin{equation}\label{est:H1-2}
		\begin{aligned}
    (2\mu+\mup)\ep^{-2}\int\pt_t\pt_{2}\varphi G_1\md\bfx 
	 \geq &(2\mu+\mup)\ep^{-2}\frac{d}{dt}\int\big( \frac{1}{2}\tfT|\pt_{2}\varphi|^2
	+ \varphi\pt_{2}\tfT\pt_{2}\varphi 
	+ \frac{\pt_{2}\tfT}{2\tfT}\varphi^2\big)\md\bfx\\
	&-\frac{1}{16}\ep^{-4}\|G_1\|^2_{L^2} - C\ep^2\|\pt_{t}\varphi \|^2_{L^2},
\end{aligned}
\end{equation}
 
	For the terms on the right-hand side of \eqref{eq:inteG1}, due to the facts that
	 \begin{equation}\notag
		\div(\tbfu\tfT)=0,\quad\div(\tbfu\pt_{2}\tfT)=0, \quad
		\tbfu\cdot\nabla\varphi=\tu^1\pt_{1}\varphi, \quad \pt_{1}(\tu^1\frac{\pt_{2}\tfT}{\tfT})=0,
	\end{equation} 
    we have
	\begin{equation}\notag
		\begin{aligned}
			&\int \tbfu\cdot\nabla\pt_{2}\varphi G_1 \md\bfx
			=\int \tbfu\cdot\nabla\pt_{2}\varphi  (\tfT\pt_{2}\varphi+\varphi\pt_{2}\tfT)\md\bfx\\
			=&\int \frac{1}{2}\div(\tbfu\tfT)(\pt_{2}\varphi)^2
			+ \div(\tbfu\pt_{2}\tfT)\varphi\pt_{2}\varphi
			+ (\tbfu\cdot\nabla\varphi)\pt_{2}\varphi\pt_{2}\tfT \md\bfx\\
			=&\int \tu^1\pt_{1}\varphi\pt_{2}\varphi\pt_{2}\tfT \md\bfx
			=\int \tu^1\pt_{1}\varphi\frac{\pt_{2}\tfT}{\tfT}
			(\tfT\pt_{2}\varphi+\varphi\pt_{2}\tfT-\varphi\pt_{2}\tfT) \md\bfx\\
			=&\int \tu^1\pt_{1}\varphi\frac{\pt_{2}\tfT}{\tfT}G_1 \md\bfx
			-\int \tu^1\frac{\pt_{2}\tfT}{\tfT}\pt_{1}(\varphi^2) \md\bfx\\
			=&\int \tu^1\pt_{1}\varphi\frac{\pt_{2}\tfT}{\tfT}G_1 \md\bfx.\\
		\end{aligned}
	\end{equation}
	This, together with the H\"{o}lder inequality, the Young inequality and \eqref{fact-1}, implies
	\begin{equation}\label{est:H1-3}
			\begin{aligned}
		 \big|-(2\mu+\mup)\ep^{-2}\int \tbfu\cdot\nabla\pt_{2}\varphi G_1 \md\bfx\big|
		 \leq& (2\mu+\mup)\ep^{-2}\|\tbfu\|_{L^\infty}\| \frac{\pt_{2}\tfT}{\tfT}\|_{L^\infty} \|G_1\|_{L^2}\|\pt_{1}\varphi \|_{L^2}\\
		 \leq&  \frac{1}{16}\ep^{-4}\|G_1\|^2_{L^2}
		  +C\ep^2 \|\pt_{1}\varphi \|^2_{L^2}.
		 	\end{aligned}
	\end{equation}
 
 Similarly, it holds that
  \begin{equation}\label{right-term-1}
  	\begin{aligned}
  		&\int \bfpsi\cdot\nabla\pt_{2}\varphi G_1 \md\bfx\\
  	=&\int \bfpsi\cdot\nabla(\frac{G_1}{\tfT}-\frac{\pt_{2}\tfT}{\tfT}\varphi)G_1 \md\bfx\\
  	=&\int \tfT\bfpsi\cdot\nabla(\frac{G_1}{\tfT})\frac{G_1}{\tfT} \md\bfx
  	-\int \bfpsi\cdot\nabla(\frac{\pt_{2}\tfT}{\tfT}\varphi)G_1 \md\bfx\\
  	=&-\frac{1}{2}\int \div(\tfT\bfpsi)(\frac{G_1}{\tfT})^2 \md\bfx
  	-\int \bfpsi\cdot\nabla(\frac{\pt_{2}\tfT}{\tfT}\varphi)G_1 \md\bfx.
    \end{aligned}
  \end{equation}
Due to facts from \eqref{ob-1} that
\begin{equation}\notag
	\begin{aligned}
	   &\|G_1\|_{L^\infty}=\|\tfT\pt_{2}\varphi+\varphi\pt_{2}\tfT\|_{L^\infty}
		\leq \|\nabla\varphi\|_{L^\infty}\|\tfT\|_{L^\infty}
		+\|\nabla\tfT\|_{L^\infty}\|\varphi\|_{L^\infty}
		\leq C,\\
		 &\|\nabla(\frac{\pt_{2}\tfT}{\tfT}\varphi)\|_{L^\infty}
		 \leq \|\nabla(\frac{\pt_{2}\tfT}{\tfT})\|_{L^\infty} \|\varphi\|_{L^\infty}
		 +\|\nabla\varphi\|_{L^\infty}\|\frac{\pt_{2}\tfT}{\tfT}\|_{L^\infty}
		 \leq C,
	\end{aligned}
\end{equation}
we have
 \begin{equation}\label{right-term-2}
	\begin{aligned}
\big|\frac{1}{2}(2\mu+\mup)\ep^{-2}\int\div(\tfT\bfpsi)(\frac{G_1}{\tfT})^2\md\bfx\big|
 \leq& C\ep^{-2}\|\frac{ \div(\tfT\bfpsi)}{\tfT^2}\|_{L^2}\|G_1\|_{L^\infty}\|G_1\|_{L^2}\\
 \leq & \frac{1}{16}\ep^{-4}\|G_1\|^2_{L^2} +C\ep^2 \|\bfpsi \|^2_{H^1}.
	\end{aligned}
\end{equation}
and
\begin{equation}\label{right-term-3}
	\begin{aligned}
	\big|(2\mu+\mup)\ep^{-2}\int\bfpsi\cdot\nabla(\frac{\pt_{2}\tfT}{\tfT}\varphi)G_1\md\bfx\big|
		\leq& C\ep^{-2}\|\bfpsi\|_{L^2}
		\|\nabla(\frac{\pt_{2}\tfT}{\tfT}\varphi)\|_{L^\infty}\|G_1\|_{L^2}\\
		\leq & \frac{1}{16}\ep^{-4}\|G_1\|^2_{L^2} +C\ep^2 \|\bfpsi \|^2_{H^1}.
	\end{aligned}
\end{equation}
It follows from \eqref{right-term-1}--\eqref{right-term-3} that
\begin{equation}\label{est:H1-4}
	\big|-(2\mu+\mup)\ep^{-2}\int (\bfpsi\cdot\nabla\pt_{2}\varphi) G_1 \md\bfx\big|
	\leq  \frac{2}{16}\ep^{-4}\|G_1\|^2_{L^2} +C\ep^2 \|\bfpsi \|^2_{H^1}.
\end{equation}	
	
Moreover, we obtain from \eqref{ob-0}, \eqref{ob-1}, \eqref{def:R3} and \eqref{def:R4} that
		\begin{equation}\label{eq:G2}
		\begin{aligned}
			G_2=&O(1)\big(|\theta|+|\nabla\theta|\big),\\
			R_3=&O(1)\big(|\nabla\pt_{1}\bfpsi|+|\pt_t\bfpsi|
			+|\nabla\bfpsi|+|\bfpsi|\big)
			+O(\ep^{-2})\big(|\theta|+|\nabla\theta|\big),\\
			R_4=&O(1)\big(|\nabla \bfpsi|+|\bfpsi|
			+|\nabla\pt_{1}\bfpsi|\big), 
		\end{aligned}
	\end{equation}
which, together with the H\"{o}lder inequality and the Young inequality, implies
\begin{equation}\label{est:H1-5}
	\begin{aligned}
		&(2\mu+\mup)\ep^{-2}\int G_1(R_3+R_4)\md\bfx\\
		\leq& \frac{1}{16}\ep^{-4} \|G_1\|_{L^2}^2
		+C(\|R_3\|_{L^2}^2+\|R_4\|_{L^2}^2)\\
		\leq& \frac{1}{16}\ep^{-4}\|G_1\|_{L^2}^2
		+C\big(\|\nabla\pt_{1}\bfpsi\|^2_{L^2}+\|\pt_t\bfpsi\|^2_{L^2}
		+\|\bfpsi\|^2_{H^1}\big)+C\ep^{-4}\|\theta\|^2_{H^1}.
	\end{aligned}
\end{equation}
	
Substituting the estimates \eqref{est:H1-1}, \eqref{est:H1-2}, \eqref{est:H1-3}, \eqref{est:H1-4} and \eqref{est:H1-5} into \eqref{eq:inteG1} leads to
\begin{equation}
	\begin{aligned}
		&\ep^{-2}\frac{\md}{\md t}\int\big( \frac{1}{2}\tfT|\pt_{2}\varphi|^2
		+ \varphi\pt_{2}\tfT\pt_{2}\varphi + \frac{\pt_{2}\tfT}{2\tfT}\varphi^2\big)\md\bfx
		+\ep^{-4}\|G_1\|_{L^2}^2\\
		\leq &C\ep^2 \|\pt_{t}\varphi \|^2_{L^2}	+C\big(\|\nabla\pt_{1}\bfpsi\|^2_{L^2}+\|\pt_t\bfpsi\|^2_{L^2}
		+\|\bfpsi\|^2_{H^1}\big)+C\ep^{-4}\|\theta\|^2_{H^1}.
	\end{aligned}
\end{equation}
Integrating the above inequality over the time interval $[0,t]$, we have
	\begin{equation} \label{div3}
		\begin{aligned}
			&\ep^{-2}\|\pt_{2}\varphi(t)\|^2_{L^2}
			+\int_{0}^{t}\ep^{-4}\|G_1(s)\|^2_{L^2}\md s\\
			\leq &C\int_{0}^{t}\bigg(\|\nabla\pt_{1}\bfpsi(s)\|^2_{L^2}
			+\|\pt_t\bfpsi(s)\|^2_{L^2}
			+\|\bfpsi(s)\|^2_{H^1}+\ep^{-4}\|\theta(s)\|^2_{H^1}
			+\ep^{2}\|\pt_t\varphi(s)\|^2_{L^2}	\bigg)\md s\\
		&+ C\ep^{-2}\big(\|\varphi_0\|^2_{H^1}+\|\varphi(t)\|^2_{L^2}\big),
		\quad\quad\quad \forall t\in[0,T], 
		\end{aligned}
	\end{equation}
	where we have used the fact from \eqref{ob-1} and the Young inequality that
	\begin{equation}
		\begin{aligned}\notag
			&\ep^{-2}\int\big(\frac{1}{2}\tfT|\pt_{2}\varphi|^2
			+ \varphi\pt_{2}\tfT\pt_{2}\varphi + \frac{\pt_{2}\tfT}{2\tfT}\varphi^2\big) \md\bfx\\
			\geq& \frac{\ep^{-2}}{2}\big(\inf_{x\in \Omega}\tfT(x)\big)
			\|\pt_{2}\varphi\|^2_{L^2}
			-\ep^{-2}\|\pt_{2}\tfT\|_{L^\infty}\|\varphi\|_{L^2}\|\pt_{2}\varphi\|_{L^2}
			-\frac{\ep^{-2}}{2}\|\frac{\pt_{2}\tfT}{\tfT}\|_{L^\infty}\|\varphi\|^2_{L^2}\\
				\geq& \frac{3}{8}\ep^{-2}\|\pt_{2}\varphi\|^2_{L^2}
			-\frac{1}{8}\ep^{-2}\|\pt_{2}\varphi\|^2_{L^2}
			-C\ep^{-2}\|\pt_{2}\tfT\|^2_{L^\infty}\|\varphi\|^2_{L^2}
			-\frac{\ep^{-2}}{2}\|\frac{\pt_{2}\tfT}{\tfT}\|_{L^\infty}\|\varphi\|^2_{L^2}\\
			\geq& \frac{1}{4}\ep^{-2}\|\pt_{2}\varphi\|^2_{L^2}-C\ep^{-2}\|\varphi\|^2_{L^2}.
		\end{aligned}
	\end{equation}
	
	Finally, it follows from \eqref{eqnormal2} that
	\begin{equation}\label{eq:pt2divpsi}
			(2\mu+\mup)\pt_2\div\bfpsi=(2\mu+\mup)(\pt_2^2\psi^2+\pt_2\pt_1\psi^1)
			=-R_3+\ep^{-2}G_1+(2\mu+\mup)\pt_2\pt_1\psi^1.
	\end{equation}
    This leads to
	\begin{equation}\label{eqnormal3}
		(2\mu+\mup)\pt_{2}\div\bfpsi
		=\ep^{-2}G_1+O(1)\big(|\pt_t\bfpsi|+|\nabla\bfpsi|+|\bfpsi|
		+|\nabla\pt_{1}\bfpsi|\big)+O(\ep^{-2})(|\theta|+|\nabla\theta|),
	\end{equation}
	which implies
	\begin{equation}\notag
			\begin{aligned}
			&\frac{1}{2}(2\mu+\mup)^2\int_{0}^{t}\|\pt_{2}\div\bfpsi(s)\|^2_{L^2}\md s\\
			\leq& \frac{1}{2}\int_{0}^{t}\ep^{-4}\|G_1(s)\|^2_{L^2}\md s
			+C\int_{0}^{t}\bigg(\|\nabla\pt_{1}\bfpsi(s)\|^2_{L^2}
			+\|\pt_t\bfpsi(s)\|^2_{L^2}
			+\|\bfpsi(s)\|^2_{H^1}+\ep^{-4}\|\theta(s)\|^2_{H^1}\bigg)\md s.
				\end{aligned}
	\end{equation}
	This, together with \eqref{div3}, gives \eqref{ene4}.
	
	The proof is completed.
\end{proof}

As a consequence of \Cref{lemma1,lemma2,lemma3}, we reach the estimate for $\int_{0}^{t}\|\nabla\div \bfpsi(s)\|^2_{L^2}\md s$.
\begin{corollary}
		Suppose that \eqref{chi} holds, and suppose that $\Re<\Re_0$ and $\ep<\vep_1$ as in \Cref{basic}. Then, it holds that

	\begin{equation} \label{est-div}
			\begin{aligned}
		&\ep^{-2}\|\nabla\varphi(t)\|^2_{L^2}
	+\int_{0}^{t}\big(\|\nabla\div \bfpsi(s)\|^2_{L^2}
	+\ep^{-4}\|G_1(s)\|^2_{L^2}\big)\md s\\
	\leq& 	C\int_{0}^{t}\ep^{-2}\|\pt_{1}\varphi(s)\|^2_{L^2}\md s 
	+C\big(\ep^{-2}A_0(0)+A_1(0)+A_2(0)\big),  	\quad\quad  \forall\, t\in[0,T],
		\end{aligned}
	\end{equation}
where the definitions of $G_1$ and $A_0$, $A_1$ and $A_2$ are given in \eqref{def:G} and \eqref{def:A0123}, respectively.
\end{corollary}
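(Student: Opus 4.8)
The estimate \eqref{est-div} follows by taking a suitable linear combination of \Cref{lemma1,lemma2,lemma3} together with the basic energy estimate \eqref{eneA0}; no new differential inequality or Gronwall argument is required. Since in two space dimensions $\nabla\varphi=(\pt_1\varphi,\pt_2\varphi)$ and $\nabla\div\bfpsi=(\pt_1\div\bfpsi,\pt_2\div\bfpsi)$, one has
\[
\|\nabla\varphi\|_{L^2}^2=\|\pt_1\varphi\|_{L^2}^2+\|\pt_2\varphi\|_{L^2}^2,\qquad
\|\nabla\div\bfpsi\|_{L^2}^2=\|\pt_1\div\bfpsi\|_{L^2}^2+\|\pt_2\div\bfpsi\|_{L^2}^2 ,
\]
so the left-hand side of \eqref{est-div} is bounded termwise by the left-hand sides of \eqref{ene3} and \eqref{ene4}: the pieces $\ep^{-2}\|\pt_2\varphi\|^2_{L^2}$, $\int_0^t\|\pt_2\div\bfpsi\|^2_{L^2}\md s$ and $\int_0^t\ep^{-4}\|G_1\|^2_{L^2}\md s$ come from \eqref{ene4}, while $\ep^{-2}\|\pt_1\varphi\|^2_{L^2}$, $\int_0^t\|\pt_1\div\bfpsi\|^2_{L^2}\md s$ and $\int_0^t\|\nabla\pt_1\bfpsi\|^2_{L^2}\md s$ come from \eqref{ene3}.

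The first step is to multiply \eqref{ene3} by a fixed constant $K$ larger than the constant $C_1$ appearing in \eqref{ene4}, and add the result to \eqref{ene4}. This is legitimate because the coefficient of $\int_0^t\|\nabla\pt_1\bfpsi(s)\|^2_{L^2}\md s$ on the left of \eqref{ene3} is a fixed positive constant (proportional to $\mu$) independent of $\ep$ and $T$; hence the term $C_1\int_0^t\|\nabla\pt_1\bfpsi(s)\|^2_{L^2}\md s$ that appears on the right of \eqref{ene4} can be absorbed into the left-hand side. What then remains on the right-hand side is
\[
C\ep^{-2}\int_0^t\|\pt_1\varphi\|^2_{L^2}\md s
+C\int_0^t\big(\|\pt_t\bfpsi\|^2_{L^2}+\ep^2\|\pt_t\varphi\|^2_{L^2}
+\ep^{-2}\|\bfpsi\|^2_{H^1}+\ep^{-4}\|\theta\|^2_{H^1}\big)\md s ,
\]
together with the data contribution $C\big(\ep^{-2}\|\varphi_0\|^2_{H^1}+\|\pt_1\bfpsi_0\|^2_{L^2}\big)+C\ep^{-2}\|\varphi(t)\|^2_{L^2}$.

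It remains to dispose of the residual integrals using the bounds already in hand. By \eqref{ene2}, $\int_0^t\|\pt_t\bfpsi\|^2_{L^2}\md s\le A_1(t)\le C\big(A_1(0)+\ep^{-2}A_0(0)\big)+C\ep^{-2}\int_0^t\|\pt_1\varphi\|^2_{L^2}\md s$; by \eqref{ene1} together with $\ep\le 1$, $\ep^2\int_0^t\|\pt_t\varphi\|^2_{L^2}\md s\le C\int_0^t\big(\|\bfpsi\|^2_{H^1}+\ep^{-2}\|\pt_1\varphi\|^2_{L^2}\big)\md s$; and by \eqref{eneA0}, $\int_0^t\big(\ep^{-2}\|\bfpsi\|^2_{H^1}+\ep^{-4}\|\theta\|^2_{H^1}\big)\md s\le C\ep^{-2}A_0(0)$. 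For the remaining terms, $\ep^{-2}\|\varphi_0\|^2_{H^1}\le\ep^{-2}A_0(0)+A_2(0)$ and $\|\pt_1\bfpsi_0\|^2_{L^2}\le A_1(0)$ follow directly from the definitions in \eqref{def:A0123}, while $\ep^{-2}\|\varphi(t)\|^2_{L^2}\le A_0(t)\le CA_0(0)$ by \eqref{eneA0}. Substituting all of these into the combined inequality gives precisely \eqref{est-div}. I expect the only delicate point to be the choice of $K$ in the first step: one must check that absorbing the $\|\nabla\pt_1\bfpsi\|^2$ term introduces no hidden dependence on $\ep$ or $T$ into the final constant, which holds since the relevant structural constants are independent of $\ep$ and $T$. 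Everything else is routine bookkeeping with \eqref{def:A0123} and the fact that each $A_i(0)$ dominates the corresponding norms of the initial perturbation.
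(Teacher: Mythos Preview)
Your proof is correct and follows essentially the same approach as the paper: multiply \eqref{ene3} by a constant large enough to absorb the $C_1\int_0^t\|\nabla\pt_1\bfpsi\|^2_{L^2}\,\md s$ term from the right-hand side of \eqref{ene4} (the paper uses $2C_1$, you use a generic $K>C_1$), then bound the residual integrals and data terms using \eqref{eneA0}, \eqref{ene1}, \eqref{ene2}, and the definitions in \eqref{def:A0123}. Your bookkeeping of which initial-data norms are controlled by which $A_i(0)$ is accurate.
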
 

\begin{proof}
	Multiplying \eqref{ene3} by $2C_1$ and then adding the resulting inequality to \eqref{ene4}, we have
	\begin{equation}
		\begin{aligned}\notag
			&\ep^{-2}\|\nabla\varphi(t)\|^2_{L^2}+\|\pt_{1}\bfpsi(t)\|^2_{L^2}
			+\int_{0}^{t}\big(\|\nabla\pt_{1}\bfpsi(s)\|^2_{L^2}+\|\nabla\div \bfpsi(s)\|^2_{L^2}
			+\ep^{-4}\|G_1(s)\|^2_{L^2}\big)\md s\\
			\leq& C\big(\ep^{-2}\|\varphi_0\|^2_{H^1}+\|\pt_{1}\bfpsi_0\|^2_{L^2}+\ep^{-2}\|\varphi(t)\|^2_{L^2}\big)
			+C\int_{0}^{t}\ep^{-2}\|\pt_{1}\varphi(s)\|^2_{L^2}\md s\\
		&+C\int_{0}^{t}\big(\ep^{-4}\|\theta(s)\|^2_{H^1}
		+\ep^{-2}\|\bfpsi(s)\|^2_{H^1}
		+\|\pt_t\bfpsi(s)\|^2_{L^2}
		+\ep^{2}\|\pt_t\varphi(s)\|^2_{L^2}\big)\md s, 
		\quad\quad  \forall\, t\in[0,T], 
		\end{aligned}
	\end{equation}
	which, together with \eqref{eneA0}, \eqref{ene1} and \eqref{ene2}, implies \eqref{est-div}.
	
	The proof is completed.
\end{proof}

To derive the estimates for $\int_{0}^{T}\|\bfpsi(t)\|^2_{H^2}\md t$ and $\int_{0}^{T}\ep^{-4}\|\pt_{1}\varphi(s)\|^2_{L^2}\md t$, we rewrite $\eqref{eq}_2$ in the form of steady nonhomogeneous Stokes equations in the infinite channel $\Omega$, i.e.,
\begin{equation}
	\left.\begin{cases}
		\div \bfpsi=g, \quad\quad\quad\quad\quad\text{in}\,\,\Omega,\\
		-\mu\Delta \bfpsi+\ep^{-2}\nabla P(\rho,\fT)=\bfF,
		\quad\quad\quad\text{in}\,\,\Omega,\\
			\bfpsi=0 \quad\quad\mathrm{on}\,\,\pt\Omega,\\
		\bfpsi\rightarrow 0\quad\quad \mathrm{as} \,\,|\bfx|\rightarrow \infty.
	\end{cases}\label{Stokes2}
	\right.
\end{equation}
where 
\begin{equation}\label{def:gF}
	g=\div\bfpsi,\quad
	\bfF=-\rho\big(\pt_t\bfpsi+\bfu\nabla\bfpsi+\bfpsi\nabla\tbfu\big)
	+(\mu+\mup)\nabla\div\bfpsi. 
\end{equation}
We have the following lemma.
\begin{lemma}  \label{lemma4}
	Suppose that \eqref{chi} holds, and suppose that $\Re<\Re_0$ and $\ep<\vep_1$ as in \Cref{basic}. Then, there exists a positive constant $\vep_2$ depending only on $\Re$, $\Pr$, $\frac{\mup}{\mu}$ and $\gamma$, such that if $\ep\leq \vep_2$, then
	\begin{equation}	\label{ene6}
	\begin{aligned}
			\int_{0}^{t}\big(\|\bfpsi(s)\|^2_{H^2}+\ep^{-4}\|\pt_1\varphi\|^2_{L^2}\big)\md s
		\leq C\big(\ep^{-2}A_0(0)+A_1(0)+A_2(0)\big), \quad\quad \forall t\in [0,T],
	\end{aligned}
	\end{equation}
where the definitions of $A_0$, $A_1$ and $A_2$ are given in \eqref{def:A0123}.
\end{lemma}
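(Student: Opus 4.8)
The plan is to use the Stokes reformulation \eqref{Stokes2}--\eqref{def:gF} of $\eqref{eq}_2$ together with the $L^2$-Stokes estimate \Cref{Stokes lemma}. The crucial preliminary observation is that the background pressure is constant: since $\tu^1=x_2$ and $\tu^2=0$ one has $\Delta\tbfu=0$, $\div\tbfu=0$, hence $\nabla P(\trho,\tfT)=0$ and in fact $P(\trho,\tfT)=\trho\tfT\equiv1$. Therefore the Stokes pressure in \eqref{Stokes2} may be taken as $p=\ep^{-2}\big(P(\rho,\fT)-1\big)=\ep^{-2}(\rho\fT-1)$, and, because $\trho$ and $\tfT$ depend on $x_2$ only,
\begin{equation}\notag
	\pt_1 p=\ep^{-2}\big(\tfT\pt_1\varphi+\theta\pt_1\varphi+\rho\pt_1\theta\big).
\end{equation}
This is the identity that will convert a bound on $\nabla p$ into the desired bound on $\ep^{-2}\pt_1\varphi$.

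Next I would estimate the Stokes data in \eqref{def:gF}. Using \eqref{ob-0}, \eqref{ob-1}, the trivial bounds $\|\tbfu\|_{L^\infty}+\|\nabla\tbfu\|_{L^\infty}\le C$ and $\|\bfpsi\|_{L^\infty}^2\le\tC\Nhat\le C$, one obtains $\|\bfF\|_{L^2}+\|g\|_{H^1}\le C\big(\|\pt_t\bfpsi\|_{L^2}+\|\bfpsi\|_{H^1}+\|\nabla\div\bfpsi\|_{L^2}\big)$. Then \Cref{Stokes lemma}, together with \Cref{Poincareineq2} to recover the lower-order part of $\|\bfpsi\|_{H^2}$, yields $\|\bfpsi\|_{H^2}+\|\nabla p\|_{L^2}\le C\big(\|\pt_t\bfpsi\|_{L^2}+\|\bfpsi\|_{H^1}+\|\nabla\div\bfpsi\|_{L^2}\big)$. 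Feeding this into the identity for $\pt_1 p$, using $\inf_{\Omega}\tfT\geq\frac{3}{4}$, the bound $\|\theta\|_{L^\infty}\le\tC\sqrt{\Nhat}\,\ep$ (so that $\ep^{-2}\|\theta\pt_1\varphi\|_{L^2}\le\tC\sqrt{\Nhat}\,\ep\,\big(\ep^{-2}\|\pt_1\varphi\|_{L^2}\big)$ is absorbable), and $\|\rho\pt_1\theta\|_{L^2}\le C\|\theta\|_{H^1}$, one arrives at $\ep^{-2}\|\pt_1\varphi\|_{L^2}\le C\|\nabla p\|_{L^2}+C\ep^{-2}\|\theta\|_{H^1}$, and hence, pointwise in $t$,
\begin{equation}\notag
	\|\bfpsi\|_{H^2}^2+\ep^{-4}\|\pt_1\varphi\|_{L^2}^2\le C\big(\|\pt_t\bfpsi\|_{L^2}^2+\|\bfpsi\|_{H^1}^2+\|\nabla\div\bfpsi\|_{L^2}^2+\ep^{-4}\|\theta\|_{H^1}^2\big).
\end{equation}

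Finally I would integrate over $[0,t]$ and substitute the already proved estimates: \eqref{est-div} for $\int_0^t\|\nabla\div\bfpsi\|_{L^2}^2\,\md s$, \eqref{ene2} for $\int_0^t\|\pt_t\bfpsi\|_{L^2}^2\,\md s$, and \eqref{eneA0} for $\int_0^t\|\bfpsi\|_{H^1}^2\,\md s\le CA_0(0)$ and $\int_0^t\ep^{-4}\|\theta\|_{H^1}^2\,\md s\le C\ep^{-2}A_0(0)$. This gives
\begin{equation}\notag
	\int_0^t\big(\|\bfpsi\|_{H^2}^2+\ep^{-4}\|\pt_1\varphi\|_{L^2}^2\big)\,\md s\le C\ep^2\int_0^t\ep^{-4}\|\pt_1\varphi\|_{L^2}^2\,\md s+C\big(\ep^{-2}A_0(0)+A_1(0)+A_2(0)\big).
\end{equation}
The main obstacle is exactly this closing step: the term $\|\nabla\div\bfpsi\|_{L^2}$ sits simultaneously inside the Stokes data and inside $\|\bfpsi\|_{H^2}$, so it cannot be absorbed directly; routing it through \eqref{est-div} produces only $\int_0^t\ep^{-2}\|\pt_1\varphi\|_{L^2}^2\,\md s=\ep^2\int_0^t\ep^{-4}\|\pt_1\varphi\|_{L^2}^2\,\md s$, which carries a spare factor $\ep^2$ relative to the target norm and can therefore be absorbed into the left-hand side once $C\ep^2\le\frac12$, i.e. for $\ep\le\vep_2$ with $\vep_2$ depending on $\Re,\Pr,\frac{\mup}{\mu},\gamma$ through the accumulated constant $C$. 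This establishes \eqref{ene6}.
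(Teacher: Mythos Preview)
Your proposal is correct and follows essentially the same approach as the paper: apply the Stokes estimate \Cref{Stokes lemma} to \eqref{Stokes2}--\eqref{def:gF}, read off $\ep^{-2}\|\pt_1\varphi\|_{L^2}$ from $\|\pt_1 p\|_{L^2}$ via the identity $\pt_1 P(\rho,\fT)=\fT\pt_1\varphi+\rho\pt_1\theta$, then close by feeding in \eqref{est-div}, \eqref{ene2}, \eqref{eneA0} and absorbing the resulting $\ep^{-2}\!\int\|\pt_1\varphi\|_{L^2}^2=\ep^2\!\int\ep^{-4}\|\pt_1\varphi\|_{L^2}^2$ into the left for small $\ep$. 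The only cosmetic difference is that the paper keeps the full coefficient $\fT$ (using $\inf\fT>\tfrac12$) rather than splitting $\fT=\tfT+\theta$ and absorbing the $\theta\pt_1\varphi$ piece separately; both are equivalent.
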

\begin{proof}
We first note that
	\begin{equation}\notag
		\bfF=O(1)\big(|\pt_t\bfpsi|+|\nabla\bfpsi|+|\bfpsi|+|\nabla\div\bfpsi|\big).
	\end{equation}
Then, we obtain from \Cref{Stokes lemma} that
	\begin{equation}\label{est-Stokes}
		\begin{aligned}
			\mu\|\bfpsi\|^2_{H^2}+\ep^{-4}\|\nabla P(\rho,\fT)\|^2_{L^2}
			\leq& C(\|\bfF\|^2_{L^2}+\|g\|^2_{L^2})\\
			\leq& C\big(\|\nabla\div\bfpsi\|^2_{L^2}+\|\bfpsi\|^2_{H^1}
			      +\|\pt_t\bfpsi\|^2_{L^2}\big), 
		\end{aligned}
	\end{equation}
	
	Next, we observe that 
	\begin{equation}  \notag
		\pt_{1}P(\rho,\fT)=\fT\pt_{1}\rho+\rho\pt_{1}\fT
		=\fT\pt_{1}\varphi+\rho\pt_{1}\theta, 
	\end{equation}
	which, together with \eqref{ob-1}, gives that
	\begin{equation}\notag
		\begin{aligned}
		\int_{0}^{t}\ep^{-4}\|\pt_{1}\varphi(s)\|^2_{L^2} \md t
			\leq& C\int_{0}^{t}\ep^{-4}\big(\|\pt_{1} P(\rho,\fT)(s)\|^2_{L^2}  
			+\|\pt_{1} \theta(s)\|^2_{L^2}\big)\md t\\
			\leq&C\int_{0}^{t}\ep^{-4}\big(\|\nabla P(\rho,\fT)\|^2_{L^2} 
			+\|\theta(s)\|^2_{H^1}\big)\md t\\
			\leq& C\int_{0}^{t}\big(\|\nabla\div\bfpsi\|^2_{L^2}+\|\bfpsi\|^2_{H^1}
		+\|\pt_t\bfpsi\|^2_{L^2}+\ep^{-4}\|\theta\|^2_{H^1}\big)\md t.
		\end{aligned}
	\end{equation}
	
Applying the estimates \eqref{eneA0}, \eqref{ene1}, \eqref{ene2} and \eqref{est-div} to the above inequality yields 
\begin{equation}
		\int_{0}^{t}\ep^{-4}\|\pt_{1}\varphi(s)\|^2_{L^2} \md t
		\leq C\int_{0}^{t}\ep^{-2}\|\pt_{1}\varphi(s)\|^2_{L^2}\md s 
		+C\big(\ep^{-2}A_0(0)+A_1(0)+A_2(0)\big). 
\end{equation} 
This implies 
\begin{equation}\label{est-pt1rho}
		\int_{0}^{t}\ep^{-4}\|\pt_{1}\varphi(s)\|^2_{L^2} \md t
		\leq C\big(\ep^{-2}A_0(0)+A_1(0)+A_2(0)\big),
\end{equation}
provided that $\ep$ is sufficiently small.

Finally, \eqref{ene5} follows from \eqref{eneA0}, \eqref{ene1}, \eqref{ene2}, \eqref{est-pt1rho}, \eqref{est-div} and \eqref{est-Stokes}.
	
The proof is completed.
\end{proof}

To obtain the estimate for $\int_{0}^{T}\ep^{-2}\| \nabla^2\theta\|^2_{L^2}\md t$, we note that $\theta$ satisfies the elliptic system
\begin{equation}\label{eq:ellptictheta}
	\left.\begin{cases}
			-\kappa\Delta \theta=\tfH	\quad\quad\text{in}\,\,\Omega, \\
			\theta=0 \quad\quad\mathrm{on}\,\,\pt\Omega,\\
		\theta\rightarrow 0\quad\quad \mathrm{as} \,\,|\bfx|\rightarrow \infty.
	\end{cases}
	\right.
\end{equation}
where
\begin{equation}\label{def:H}
	\begin{aligned}
			\tfH=&-\rho\big(\pt_t\theta+ \bfu\cdot\nabla\theta+\bfpsi\cdot\nabla\tfT\big)
		-P(\rho,\fT) \div\bfpsi  \\
		&+\ep^{2}\big(2\mu|\Dpsi|^2+\mup(\div\bfpsi)^2+4\mu \Dtbfu:\Dpsi\big).
	\end{aligned}
\end{equation}

Then, we have the following lemma.
\begin{lemma} \label{lemma5}
	Suppose that \eqref{chi} holds, and suppose that $\Re<\Re_0$ and $\ep\leq\vep_2$ as in Lemma \ref{lemma4}. Then, it holds that
	\begin{equation}
		\ep^{-2}\int_{0}^{t}\|\nabla^2 \theta(s)\|^2_{L^2}\md s
		\leq C\big(\ep^{-2}A_0(0)+A_1(0)+A_2(0)\big),\quad\quad\quad \forall t\in[0,T], 	\label{ene5}
	\end{equation}
	where the definitions of $A_0$, $A_1$ and $A_2$ are given in \eqref{def:A0123}.
\end{lemma}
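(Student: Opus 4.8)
The plan is to view the elliptic system \eqref{eq:ellptictheta} as a Poisson problem $-\kappa\Delta\theta=\tfH$ with homogeneous Dirichlet data, apply the standard $H^2$ elliptic estimate on the infinite channel, bound $\|\tfH\|_{L^2}$ by quantities already appearing in $A_0(t)$ and $A_1(t)$, integrate in time, and then close using the estimates of \Cref{basic}, \Cref{lemma1} and \Cref{lemma4}. First I would record the elliptic bound: the $L^2$-regularity estimate for $-\kappa\Delta$ with zero Dirichlet boundary condition on the unbounded channel $\Omega$ follows either from the scalar case of \Cref{Lame lemma}, or, to be self-contained, by the exhaustion argument used in the proof of \Cref{Stokes lemma} --- approximating $\Omega$ by bounded smooth subdomains $\Omega_i\uparrow\Omega$, applying interior/boundary estimates on each $\Omega_i$, and letting $i\to\infty$. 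This gives $\|\nabla^2\theta(t)\|_{L^2}\leq C\|\tfH(t)\|_{L^2}$ for all $t\in[0,T]$.

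Next I would estimate $\|\tfH\|_{L^2}$ term by term using \Cref{elementary}. Writing $\bfu=\tbfu+\bfpsi$ with $\|\tbfu\|_{L^\infty}\leq1$ and $\|\bfpsi\|_{L^\infty}^2\leq\tC\Nhat$, and using that $\rho,\fT$ are bounded with $\|\nabla\tbfu\|_{L^\infty}\leq\tC$, $\|\nabla\tfT\|_{L^\infty}\leq\tC\ep$, the transport and pressure-work terms $-\rho(\pt_t\theta+\bfu\cdot\nabla\theta+\bfpsi\cdot\nabla\tfT)-P(\rho,\fT)\div\bfpsi$ are controlled in $L^2$ by $C(\|\pt_t\theta\|_{L^2}+\|\nabla\theta\|_{L^2}+\|\bfpsi\|_{H^1})$. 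For the viscous-heating terms I would use $\big\||\Dpsi|^2\big\|_{L^2}\leq\|\nabla\bfpsi\|_{L^\infty}\|\nabla\bfpsi\|_{L^2}$ together with $\|\nabla\bfpsi\|_{L^\infty}^2\leq\tC\Nhat\ep^{-2}$ and $\|\Dtbfu\|_{L^\infty}\leq\tC$, so that $\ep^2\big(2\mu|\Dpsi|^2+\mup(\div\bfpsi)^2+4\mu\Dtbfu:\Dpsi\big)$ has $L^2$ norm $O(\ep)\|\nabla\bfpsi\|_{L^2}+O(\ep^2)\|\nabla\bfpsi\|_{L^2}$. Using $\ep\leq1$, $\Nhat\leq1$ and the Poincar\'e inequality of \Cref{Poin} to absorb $\|\theta\|_{L^2}$ into $\|\nabla\theta\|_{L^2}$, this gives $\|\tfH(t)\|_{L^2}^2\leq C\big(\|\pt_t\theta(t)\|_{L^2}^2+\|\theta(t)\|_{H^1}^2+\|\bfpsi(t)\|_{H^1}^2\big)$.

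Finally I would square the elliptic estimate, integrate over $[0,t]$, multiply by $\ep^{-2}$, and identify each resulting time-integral: by \eqref{def:A0123}, $\ep^{-2}\int_0^t\|\pt_t\theta\|_{L^2}^2\md s\leq A_1(t)$, $\ep^{-2}\int_0^t\|\theta\|_{H^1}^2\md s\leq A_0(t)$, and $\ep^{-2}\int_0^t\|\bfpsi\|_{H^1}^2\md s\leq\ep^{-2}A_0(t)$. Hence $\ep^{-2}\int_0^t\|\nabla^2\theta\|_{L^2}^2\md s\leq C\big(A_1(t)+\ep^{-2}A_0(t)\big)$, and it remains to invoke \eqref{eneA0}, which gives $A_0(t)\leq CA_0(0)$, and \eqref{ene2} combined with \eqref{ene6} (the latter bounding $\int_0^t\ep^{-4}\|\pt_1\varphi\|_{L^2}^2\md s$, which with $\ep\leq1$ dominates the $\ep^{-2}\|\pt_1\varphi\|_{L^2}^2$-integral appearing in \eqref{ene2}), which gives $A_1(t)\leq C\big(\ep^{-2}A_0(0)+A_1(0)+A_2(0)\big)$. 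Combining the two yields \eqref{ene5}.

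I do not expect a real obstacle: this is an elliptic regularity bound for $\theta$ followed by bootstrapping the $H^1$-level estimates already in hand. The only mildly delicate point is the nonlinear viscous-heating term $\ep^2\big(2\mu|\Dpsi|^2+\mup(\div\bfpsi)^2\big)$, whose $L^2$ control rests on the $L^\infty$ bound for $\nabla\bfpsi$ from \Cref{elementary} and on the smallness of $\ep$ and $\Nhat$; and one should phrase the elliptic estimate on the unbounded channel carefully, which is done exactly as in \Cref{Stokes lemma}.
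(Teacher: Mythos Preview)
Your proposal is correct and follows essentially the same route as the paper: apply the $H^2$ elliptic estimate to the Poisson problem \eqref{eq:ellptictheta}, bound $\|\tfH\|_{L^2}$ by $\|\pt_t\theta\|_{L^2}+\|\theta\|_{H^1}+\|\bfpsi\|_{H^1}$ plus the viscous-heating contribution, integrate in time, and close via \eqref{eneA0}, \eqref{ene2} and \eqref{ene6}. The only cosmetic difference is your treatment of the quadratic term $\ep^2|\Dpsi|^2$: you use the $L^\infty\times L^2$ splitting with $\|\nabla\bfpsi\|_{L^\infty}^2\leq\tC\Nhat\ep^{-2}$ from \Cref{elementary}, whereas the paper uses $L^4\times L^4$ together with $\|\nabla\bfpsi\|_{L^4}\leq C\|\bfpsi\|_{H^2}$ and $\int_0^t\|\bfpsi\|_{H^2}^2\md s$ from \eqref{ene6}; both are equally valid here.
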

\begin{proof}
	Based on the classical elliptic theory \cite{ADN1959,ADN1964}, we obtain from \eqref{eq:ellptictheta}, \eqref{ob-1}, \eqref{ene2} and \eqref{ene6} that
	\begin{equation}\notag
		\begin{aligned}
			&\ep^{-2}\int_{0}^{t}\|\nabla^2 \theta(s)\|^2_{L^2}\md s\\
			\leq& C\ep^{-2}\int_{0}^{t}\big(\|\pt_t \theta(s)\|^2_{L^2}+\|\nabla\theta(s)\|^2_{L^2}
			+\|\nabla\tfT\|^2_{L^\infty}\|\bfpsi(s)\|^2_{L^2}
			+\ep^4(1+\|\nabla\bfpsi(s)\|^2_{L^4})\|\nabla \bfpsi(s)\|^2_{L^4}\big)\md s\\
	\leq& C\ep^{-2}\int_{0}^{t}\big(\|\pt_t \theta(s)\|^2_{L^2}+\|\theta(s)\|^2_{H^1}\big)\md s 
	 +C\ep^2\sup_{s\in[0,t]}\big(1+\|\bfpsi(s)\|^2_{H^2}\big)			\int_{0}^{t}\|\bfpsi(s)\|^2_{H^2}\md s\\
			\leq& C\big(\ep^{-2}A_0(0)+A_1(0)+A_2(0)\big),
			\quad\quad\quad \forall t\in[0,T],  
		\end{aligned}
	\end{equation}
	where we have used the fact from \eqref{focus} that $\sup_{s\in[0,t]}\ep^2\|\bfpsi(s)\|^2_{H^2}\leq \hat{N}\leq 1$.
	
	The proof is completed.
\end{proof}

In summary of \Cref{lemma1}--\Cref{lemma5}, we have the following lemma.
\begin{lemma}\label{lemmaH1}
	Suppose that \eqref{chi} holds, and suppose that $\Re<\Re_0$ and $\ep\leq\vep_2$ as in \Cref{lemma4}. Then, we have
	\begin{equation}\label{eneA12}
		A_1(t)+A_2(t) \leq C\big(\ep^{-2}A_0(0)+A_1(0)+A_2(0)\big),
		\quad\quad \forall t\in[0,T],  
	\end{equation}
	where the definitions of $A_0$, $A_1$ and $A_2$ are given in \eqref{def:A0123}.
\end{lemma}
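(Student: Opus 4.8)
The statement is a synthesis of the $H^1$-level bounds obtained in \Cref{lemma1,lemma2,lemma3,lemma4,lemma5}. Inspecting the definitions in \eqref{def:A0123}, one sees that every individual term contributing to $A_1(t)$ and to $A_2(t)$ has already been estimated by the right-hand side of \eqref{eneA12}, \emph{except} that \eqref{ene2} and \eqref{est-div} still carry the coupling quantity $\int_0^t\ep^{-2}\|\pt_1\varphi(s)\|^2_{L^2}\,\md s$ on their right-hand sides. The plan is therefore to first discharge that quantity using \Cref{lemma4}, then feed the outcome back into \eqref{ene2} and \eqref{est-div}, and finally collect the remaining pieces of $A_2(t)$.

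For the first step I would use \eqref{ene6}, which supplies $\int_0^t\ep^{-4}\|\pt_1\varphi(s)\|^2_{L^2}\,\md s\leq C\big(\ep^{-2}A_0(0)+A_1(0)+A_2(0)\big)$; since $\ep\leq 1$ by \eqref{focus}, this also gives $\int_0^t\ep^{-2}\|\pt_1\varphi(s)\|^2_{L^2}\,\md s\leq C\big(\ep^{-2}A_0(0)+A_1(0)+A_2(0)\big)$. Substituting the latter into \eqref{ene2} bounds all of $A_1(t)$ — that is, $\sup_{s\leq t}\big(\|\nabla\bfpsi\|^2_{L^2}+\ep^{-2}\|\nabla\theta\|^2_{L^2}\big)$ and $\int_0^t\big(\|\pt_t\bfpsi\|^2_{L^2}+\ep^{-2}\|\pt_t\theta\|^2_{L^2}\big)\,\md s$ — by the right-hand side of \eqref{eneA12}; substituting it into \eqref{est-div} likewise bounds $\ep^{-2}\sup_{s\leq t}\|\nabla\varphi(s)\|^2_{L^2}$ and $\int_0^t\ep^{-4}\|G_1(s)\|^2_{L^2}\,\md s$, where $G_1=\tfT\pt_2\varphi+\varphi\pt_2\tfT$ as in \eqref{def:G}.

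It then remains to absorb the block $\int_0^t\big(\ep^{-2}\|\pt_t\varphi\|^2_{L^2}+\|\bfpsi\|^2_{H^2}+\ep^{-2}\|\theta\|^2_{H^2}\big)\,\md s$ from $A_2(t)$. The middle term $\int_0^t\|\bfpsi\|^2_{H^2}\,\md s$ is exactly one of the quantities bounded in \eqref{ene6}. For $\int_0^t\ep^{-2}\|\pt_t\varphi\|^2_{L^2}\,\md s$ I would apply \eqref{ene1}, obtaining a bound by $C\ep^{-2}\int_0^t\|\bfpsi\|^2_{H^1}\,\md s+C\ep^{-2}\int_0^t\|\pt_1\varphi\|^2_{L^2}\,\md s$; here $\int_0^t\|\bfpsi\|^2_{H^1}\,\md s\leq A_0(t)\leq CA_0(0)$ by \eqref{eneA0}, so the first summand is absorbed into $C\ep^{-2}A_0(0)$, and the second was already controlled above. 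For $\int_0^t\ep^{-2}\|\theta\|^2_{H^2}\,\md s$ I would split $\|\theta\|^2_{H^2}$ into its $H^1$-part, whose time integral lies inside $A_0(t)\leq CA_0(0)$, and $\|\nabla^2\theta\|^2_{L^2}$, whose $\ep^{-2}$-weighted time integral is precisely the content of \eqref{ene5}. Summing the resulting inequalities yields \eqref{eneA12} for all $t\in[0,T]$, under the standing assumptions $\Re<\Re_0$ and $\ep\leq\vep_2$ inherited from \Cref{lemma4}. I do not anticipate any real obstacle at this stage: the only genuinely delicate estimate — closing the loop for $\pt_1\varphi$ through the steady Stokes system \eqref{Stokes2} — has already been carried out in \Cref{lemma4}, and this lemma amounts to the bookkeeping that checks that the list \eqref{def:A0123} is fully covered.
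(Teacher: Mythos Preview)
Your proposal is correct and matches the paper's own proof, which is the one-line remark that \eqref{eneA12} follows from \eqref{ene1}, \eqref{ene3}, \eqref{ene4}, \eqref{ene5} and \eqref{ene6}. You have simply made the bookkeeping explicit: using \eqref{ene6} to close the $\ep^{-2}\int_0^t\|\pt_1\varphi\|^2$ loop, then feeding this into \eqref{ene2} and \eqref{est-div} (the latter being the corollary of \eqref{ene3}--\eqref{ene4}), and collecting the remaining pieces via \eqref{ene1}, \eqref{ene5} and \eqref{eneA0}.
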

\begin{proof}
	The estimate \eqref{eneA12} follows from \eqref{ene1}, \eqref{ene3}, \eqref{ene4}, \eqref{ene5} and \eqref{ene6}. This completes the proof.
\end{proof}

\subsection{Estimates for higher order derivatives.}
We are in a position to show the higher order \textit{a priori} estimates, which can be derived by using a similar argument as in the previous subsection. Since we consider only the case with small initial perturbation, we point out that with the \textit{a priori} estimates of $A_0$, $A_1$ and $A_2$ at hand, the estimates for those terms involving products of only lower order derivatives can be directly obtained by using \eqref{eneA0} and \eqref{eneA12}, provided that $\hat{N}$ is small enough (see \eqref{Nhat} for the definition of $\hat{N}$). We also refer the reader to \cite{MN1980,MN1983} for higher order estimates in the case with initial data around a steady state with zero velocity. In what follows we merely sketch the outline of the proof and mainly focus on the estimates for the terms involving higher order derivatives.

We first present the estimates for $\sup_{t\in [0,T]}\ep^2 A_3(t)$.
\begin{lemma} \label{lemmaH2temp}
	Suppose that \eqref{chi} holds, and suppose that $\Re<\Re_0$ and $\ep\leq\vep_2$ as in \Cref{lemma4}. Then, it holds that
	\begin{equation}\label{eneA3}
		\ep^{2}A_3(t)\leq C\big(\ep^{2}A_3(0)+A_2(0)+A_1(0)+\ep^{-2}A_0(0)\big),
		\quad  \forall t\in[0,T],
	\end{equation}
	where the definitions of $A_0$, $A_1$, $A_2$ and $A_3$ are given in \eqref{def:A0123}. 
\end{lemma}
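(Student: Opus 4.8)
The plan is to differentiate the momentum and temperature equations $\eqref{eq}_2$ and $\eqref{eq}_3$ with respect to $t$ and run energy estimates one order above \Cref{lemma1}. First note that the $\pt_t\varphi$-contributions to $A_3$ require no new differential inequality: arguing as in \eqref{pt1} directly from $\eqref{eq}_1$, one has $\|\pt_t\varphi(t)\|_{L^2}\le\|\pt_1\varphi(t)\|_{L^2}+C\|\bfpsi(t)\|_{H^1}$ pointwise in $t$, so
\[
\sup_{s\in[0,t]}\|\pt_t\varphi(s)\|^2_{L^2}\le C\ep^2A_2(t)+C\big(A_0(t)+A_1(t)\big),
\]
which is controlled by \Cref{basic} and \Cref{lemmaH1}. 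It therefore remains to bound $\ep^2\sup_{[0,t]}\big(\|\pt_t\bfpsi\|^2_{L^2}+\ep^{-2}\|\pt_t\theta\|^2_{L^2}\big)$ together with $\ep^2\int_0^t\big(\|\nabla\pt_t\bfpsi\|^2_{L^2}+\ep^{-2}\|\nabla\pt_t\theta\|^2_{L^2}\big)\,\md s$.

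Differentiating $\eqref{eq}_3$ in $t$ and testing against $\pt_t\theta$ produces, after integration by parts, $\frac{d}{dt}\int\frac{\rho}{2(\gamma-1)}|\pt_t\theta|^2\,\md\bfx+\kappa\|\nabla\pt_t\theta\|^2_{L^2}$ on the left; likewise, differentiating $\eqref{eq}_2$ in $t$ and testing against $\pt_t\bfpsi$ produces $\frac{d}{dt}\int\frac{\rho}{2}|\pt_t\bfpsi|^2\,\md\bfx+\mu\|\nabla\pt_t\bfpsi\|^2_{L^2}+(\mu+\mup)\|\div\pt_t\bfpsi\|^2_{L^2}$. The pressure coupling appears as $-\ep^{-2}\int\pt_tP(\rho,\fT)\,\div\pt_t\bfpsi\,\md\bfx=-\ep^{-2}\int(\fT\pt_t\varphi+\rho\pt_t\theta)\,\div\pt_t\bfpsi\,\md\bfx$; after multiplying the momentum identity by $\ep^2$ it is absorbed into $\ep^2(\mu+\mup)\|\div\pt_t\bfpsi\|^2_{L^2}$ up to $C\ep^{-2}(\|\pt_t\varphi\|^2_{L^2}+\|\pt_t\theta\|^2_{L^2})$, whose time integral is handled by \eqref{pt1}, \eqref{ene1} and \Cref{lemmaH1}. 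All the remaining right-hand side terms — the commutator $\frac12\int\frac{\pt_t\varphi}{\gamma-1}|\pt_t\theta|^2$, the transport terms generated by $\pt_t(\rho\bfu\cdot\nabla\bfpsi)$ and $\pt_t(\rho\bfu\cdot\nabla\theta)$, the coupling $\int P(\rho,\fT)\,\div\pt_t\bfpsi\,\pt_t\theta$, and the differentiated $\ep^2$-order terms on the right of $\eqref{eq}_3$ — are estimated by the Young and Gagliardo–Nirenberg inequalities together with \eqref{ob-0}--\eqref{ob-1}, using $\Nhat\le1$ and, whenever a factor $\|\nabla\pt_t\bfpsi\|^2_{L^2}$ or $\|\nabla\pt_t\theta\|^2_{L^2}$ is produced, an $\ep$-adapted Young weight so that it is absorbed by the corresponding viscous or thermal dissipation. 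The term driven by $\rho\,\pt_t\bfpsi\cdot\nabla\tfT$ is harmless, since $\|\nabla\tfT\|_{L^\infty}\le\tC\ep$ by \eqref{ob-0}.

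The main obstacle is the drift term coming from $\rho\,\pt_t\bfpsi\cdot\nabla\tbfu$ in the differentiated momentum equation: because $\nabla\tbfu$ carries the order-one entry $\pt_2\tu^1=1$, testing against $\pt_t\bfpsi$ creates a term of size $C\|\pt_t\bfpsi\|^2_{L^2}$ with $C$ depending only on $\Pr$, $\gamma$ and $\frac{\mup}{\mu}$ but not on $\ep$, and a bare Gronwall argument would ruin time-uniformity. Exactly as in the proof of \Cref{basic}, this is absorbed by the viscous dissipation: since $\mu=1/\Re$, the Poincaré inequality \Cref{Poin} gives $C\|\pt_t\bfpsi\|^2_{L^2}\le C\tC^2\|\nabla\pt_t\bfpsi\|^2_{L^2}\le\frac{\mu}{2}\|\nabla\pt_t\bfpsi\|^2_{L^2}$ once $\Re<\Re_0$, and similarly for the thermal term. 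Adding the momentum identity weighted by $\ep^2$ to the temperature identity, integrating over $[0,t]$, absorbing the dissipative and $\Nhat$-small contributions (which requires $\ep\le\vep_2$ and $\Nhat$ small), and invoking \Cref{basic} and \Cref{lemmaH1} for the lower-order integrals, we arrive at $\ep^2A_3(t)\le C\big(\ep^2A_3(0)+A_2(0)+A_1(0)+\ep^{-2}A_0(0)\big)$, in which $\ep^2\|\pt_t\bfpsi(0)\|^2_{L^2}$ and $\|\pt_t\theta(0)\|^2_{L^2}$ are precisely the pieces of $\ep^2A_3(0)$.
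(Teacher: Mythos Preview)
Your proposal is correct and follows essentially the same scheme as the paper: differentiate $\eqref{eq}_2$ and $\eqref{eq}_3$ in $t$, test against $\ep^2\pt_t\bfpsi$ and $\pt_t\theta$, handle the pressure coupling by integration by parts and a Young inequality producing $C\ep^{-2}(\|\pt_t\varphi\|^2_{L^2}+\|\pt_t\theta\|^2_{L^2})$, and control $\sup_{[0,t]}\|\pt_t\varphi\|^2_{L^2}$ directly from $\eqref{eq}_1$.

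The one genuine difference is your treatment of the drift term $\ep^2\int\rho(\pt_t\bfpsi\cdot\nabla\tbfu)\cdot\pt_t\bfpsi\,\md\bfx$. You flag it as the main obstacle and absorb the resulting $C\ep^2\|\pt_t\bfpsi\|^2_{L^2}$ pointwise into the viscous dissipation via Poincar\'{e} and Reynolds smallness. The paper does not do this: it simply leaves $C\int_0^t\|\pt_t\bfpsi(s)\|^2_{L^2}\,\md s$ on the right-hand side and observes that this integral is already part of $A_1(t)$, hence bounded by \Cref{lemmaH1}. The paper's route is shorter and uses only the Reynolds threshold $\Re_0$ inherited from the lower-order lemmas; your Poincar\'{e} absorption is valid but may in principle require a bound on $\Re$ not guaranteed to coincide with the $\Re_0$ already fixed in \Cref{basic}. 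One minor slip: you say the pressure term is absorbed into $\ep^2(\mu+\mup)\|\div\pt_t\bfpsi\|^2_{L^2}$, but the hypothesis is only $\mu+\mup\ge 0$, so this coefficient may vanish; the paper (and you should) absorb into $\ep^2\mu\|\nabla\pt_t\bfpsi\|^2_{L^2}$ instead.
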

\begin{proof}	
	It follows from the equation $\eqref{eq}_1$ and \eqref{ob-1} that
	\begin{equation}\notag
		\begin{aligned}
			\|\pt_{t}\varphi\|_{L^2}
			\leq& \big(\|\bfu\|_{L^\infty}\|\nabla\varphi\|_{L^2}
			+\|\rho\|_{L^\infty}\|\div\bfpsi\|_{L^2}
			+\|\trho\|_{L^\infty}\|\bfpsi\|_{L^2}\big)\\
			\leq & C\big(\|\nabla\varphi\|_{L^2}+\|\bfpsi\|_{H^1}\big),
		\end{aligned}
	\end{equation}
	which, together with \eqref{eneA0} and \eqref{eneA12}, implies
	\begin{equation}\label{ineq:pt1}
			\|\pt_{t}\varphi(t)\|^2_{L^2}\leq C\ep^2A_2(t)+C\big(A_1(t)+A_0(t)\big),
				\quad  \forall t\in[0,T],
	\end{equation}
	
	Taking the derivatives with respect to $t$ on the equations $\eqref{eq}_2$ and $\eqref{eq}_3$, and then multiplying the resulting equations by $\ep^2\pt_{t}\bfpsi$ and $\pt_{t}\theta$, respectively, we obtain, by using the method of integration by parts, that
	\begin{equation} \label{ineq:pt2}
		\begin{aligned}
			&\int\frac{1}{2}\big(\ep^2\rho|\pt_t\bfpsi|^2+\frac{1}{\gamma-1}\rho|\pt_t\theta|^2\big)(t)\md\bfx\\
			&+\ep^2\int_{0}^{t}\big(\mu\|\nabla\pt_t\bfpsi(s)\|^2_{L^2}+(\mu+\mup)\|\div\pt_t\bfpsi(s)\|^2_{L^2}\big)\md s
			+\int_{0}^{t}\kappa\|\nabla\pt_t\theta(s)\|^2_{L^2}\md s\\
			\leq&\int\frac{1}{2}\big(\ep^2\rho|\pt_t\bfpsi|^2
			+\frac{1}{\gamma-1}\rho|\pt_t\theta|^2\big)(0)\md\bfx + \frac{1}{2}\ep^2\int_{0}^{t}\mu\|\nabla\pt_{t}\bfpsi(s)\|^2_{L^2}\md s\\
			&+ \frac{1}{2}\int_{0}^{t}\kappa\|\nabla\pt_{t}\theta(s)\|^2_{L^2}\md s
			+C\int_{0}^{t}\big(\ep^{-2}\|\pt_{t}\varphi(s)\|^2_{L^2}
			+\ep^{-2}\|\pt_{t}\theta(s)\|^2_{L^2}+\|\pt_{t}\bfpsi(s)\|^2_{L^2}\big) \md s ,  
		\end{aligned}
	\end{equation}
	where we have used \eqref{ob-0}, \eqref{ob-1} and the facts from the Young inequality that
	\begin{equation}\notag
			\begin{aligned}
		\ep^2\int_{0}^{t}\int (\pt_{t}(\rho\bfpsi)\nabla\bfpsi)\cdot\pt_{t}\bfpsi\md\bfx\md s
			\leq& C\ep^2(\|\pt_{t}\varphi \|_{L^2}+\|\pt_{t}\bfpsi \|_{L^2})\|\nabla\bfpsi \|_{L^\infty}
			\|\pt_{t}\bfpsi \|_{L^2}\\
			\leq& C\ep(\|\pt_{t}\varphi \|^2_{L^2}+\|\pt_{t}\bfpsi \|^2_{L^2}),
		\end{aligned}
	\end{equation}
	and
	\begin{equation}\notag
		\begin{aligned}
			&\int \pt_{t}( P(\rho,\fT)\div\bfpsi)\pt_{t}\theta  \md \bfx\\
		=&	\int \pt_{t}P(\rho,\fT)\div\bfpsi\pt_{t}\theta  \md \bfx+
			\int P(\rho,\fT)\pt_{t}\div\bfpsi\pt_{t}\theta\md\bfx\\
		=&\int\big(\fT\pt_t\varphi+\rho\pt_{t}\theta\big)\div\bfpsi\pt_{t}\theta\md\bfx
		+\int P(\rho,\fT)\pt_{t}\div\bfpsi\pt_{t}\theta\md\bfx\\
		\leq& \big(\|\fT\|_{L^\infty}\|\pt_t\varphi\|_{L^2}
		+\|\rho\|_{L^\infty}\|\pt_t\theta\|_{L^2}\big)
		\|\div\bfpsi\|_{L^\infty}\|\pt_t\theta\|_{L^2}
		+\| P(\rho,\fT)\|_{L^\infty}\|\pt_{t}\theta\|_{L^2}\|\pt_{t}\div\bfpsi\|_{L^2} \\
		\leq&  	\frac{1}{8}\ep^2\mu\|\nabla\pt_{t}\bfpsi\|^2_{L^2}
		+C\ep^{-2}\big(\|\pt_{t}\varphi\|^2_{L^2}+\|\pt_{t}\theta\|^2_{L^2}\big),
		\end{aligned}
	\end{equation}
	and
		\begin{equation}\notag
		\begin{aligned}
			\int\nabla\pt_t P(\rho,\fT)\cdot\pt_t\bfpsi\md\bfx 
			=&-\int\pt_t P(\rho,\fT)\pt_t\div\bfpsi\md\bfx \\
=&-\int\big(\fT\pt_t\varphi+\rho\pt_{t}\theta\big)\pt_{t}\div\bfpsi\md\bfx \\
			\leq& \frac{1}{8}\ep^2\mu\|\nabla\pt_{t}\bfpsi\|^2_{L^2}
			+C\ep^{-2}\big(\|\pt_{t}\varphi\|^2_{L^2}+\|\pt_{t}\theta\|^2_{L^2}\big),
		\end{aligned}
	\end{equation}
	and 
	\begin{equation}\notag
		\begin{aligned}
			\int 4\ep^2\mu\big(\Dtbfu:\pt_{t}\Dpsi\pt_{t}\big)\theta \md\bfx 
			\leq& C\ep^2\mu\|\nabla\pt_{t}\bfpsi\|_{L^2}\|\pt_{t}\theta\|_{L^2}\\
				\leq&  	 \frac{1}{8}\ep^2\mu\|\nabla\pt_{t}\bfpsi\|^2_{L^2}
			+C\ep^{2}\|\pt_{t}\theta\|^2_{L^2}.
		\end{aligned}
	\end{equation}
	
	Noting that \eqref{eneA0} and \eqref{eneA12}, \eqref{ineq:pt2} leads to
	\begin{equation}\notag
			\begin{aligned}
		&\int\big(\ep^2\rho|\pt_t\bfpsi|^2+\rho|\pt_t\theta|^2\big)(t)\md\bfx\\
			\leq&C\int\big(\ep^2\rho|\pt_t\bfpsi|^2
		+\rho|\pt_t\theta|^2\big)(0)\md\bfx+C(A_2(t)+A_1(t)+A_0(t)), \quad  \forall t\in[0,T],
			\end{aligned}
	\end{equation}
	which, together with \eqref{ineq:pt1}, gives \eqref{eneA3}.
	
    The proof is completed.
\end{proof}

Next, we derive the $\ep$-weighted $H^2$-type estimates on $(\varphi,\bfpsi,\theta)$ as follows.
\begin{lemma} \label{lem:supH2}
	Suppose that \eqref{chi} holds, and suppose that $\Re<\Re_0$ and $\ep\leq\vep_2$ as in \Cref{lemma4}. Then, it holds that
	\begin{equation}\label{est:supH2}
		\ep^2\|\nabla^2\bfpsi(t)\|^2_{L^2}
		+\|\nabla^2\theta(t)\|^2_{L^2}
		\leq C\big(\ep^{2}A_3(0)+A_2(0)+A_1(0)+\ep^{-2}A_0(0)\big),\quad  \forall t\in[0,T], 
	\end{equation}
	where the definitions of $A_0$, $A_1$, $A_2$ and $A_3$ are given in \eqref{def:A0123}. 
\end{lemma}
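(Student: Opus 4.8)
Here is how I would approach the proof of \eqref{est:supH2}.

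\medskip
The plan is to read off both second-order quantities from the elliptic structure of the equations and then feed in the lower-order bounds already obtained in \Cref{basic}, \Cref{lemmaH1} and \Cref{lemmaH2temp}. First I would rewrite the momentum equation $\eqref{eq}_2$ as the Lam\'{e} system
\begin{equation}\notag
	\mu\Delta\bfpsi+(\mu+\mup)\nabla\div\bfpsi=\bfF,\qquad
	\bfF:=\rho\big(\pt_t\bfpsi+\bfu\cdot\nabla\bfpsi+\bfpsi\cdot\nabla\tbfu\big)+\ep^{-2}\nabla P(\rho,\fT),
\end{equation}
with $\bfpsi=0$ on $\pt\Omega$ and $\bfpsi\rightarrow0$ at infinity, so that \Cref{Lame lemma}(1) (with $k=0$, $p=2$) gives $\|\bfpsi\|_{H^2}\leq C\|\bfF\|_{L^2}$, hence $\ep^{2}\|\nabla^2\bfpsi\|^2_{L^2}\leq C\ep^{2}\|\bfF\|^2_{L^2}$. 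The inertial part of $\bfF$ is routine: by \eqref{ob-1} its $L^2$ norm is bounded by $C(\|\pt_t\bfpsi\|_{L^2}+\|\bfpsi\|_{H^1})$, so it contributes $C\ep^{2}\big(\|\pt_t\bfpsi\|^2_{L^2}+\|\bfpsi\|^2_{H^1}\big)\leq C\big(\ep^{2}A_3(t)+\ep^{2}A_1(t)+\ep^{2}A_0(t)\big)$ after reading off \eqref{def:A0123}.

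\medskip
The delicate term is the pressure gradient $\ep^{-2}\nabla P(\rho,\fT)$. Here I would use the exact cancellation $\nabla P(\trho,\tfT)=0$, valid since $\trho\tfT\equiv1$ by \eqref{def:Couette}, to write
\begin{equation}\notag
	\nabla P(\rho,\fT)=\nabla\big(\rho\fT-\trho\tfT\big)=\nabla\big(\tfT\varphi+\trho\theta+\varphi\theta\big),
\end{equation}
and then exploit $\|\nabla\tfT\|_{L^\infty}+\|\nabla\trho\|_{L^\infty}\leq\tC\ep$ from \eqref{ob-0} together with $\|(\varphi,\theta)(t)\|^2_{L^\infty}\leq\tC\Nhat\ep^2$ from \eqref{ob-1} to get
\begin{equation}\notag
	\ep^{2}\big\|\ep^{-2}\nabla P(\rho,\fT)\big\|^2_{L^2}
	\leq C\ep^{-2}\big(\|\nabla\varphi\|^2_{L^2}+\|\nabla\theta\|^2_{L^2}\big)
	+C\big(\|\varphi\|^2_{L^2}+\|\theta\|^2_{L^2}\big)
	\leq C\big(A_2(t)+A_1(t)+A_0(t)\big),
\end{equation}
the last inequality by \eqref{def:A0123}. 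Combining the two parts gives $\ep^{2}\|\nabla^2\bfpsi(t)\|^2_{L^2}\leq C\big(\ep^{2}A_3(t)+A_2(t)+A_1(t)+A_0(t)\big)$.

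\medskip
For $\theta$ I would use the elliptic problem \eqref{eq:ellptictheta}, so that the classical $L^2$-estimate for $-\kappa\Delta\theta=\tfH$ with homogeneous Dirichlet data yields $\|\nabla^2\theta\|_{L^2}\leq C\|\tfH\|_{L^2}$. Estimating $\tfH$ from \eqref{def:H} by means of \eqref{ob-0} and \eqref{ob-1}, the only non-routine term is the one arising from the quadratic dissipation $\ep^{2}|\Dpsi|^2$, namely $\ep^{4}\|\nabla\bfpsi\|^4_{L^4}$ after taking $L^2$ norms, which I would split with \Cref{GN lemma} as $C\big(\ep^{2}\|\nabla\bfpsi\|^2_{L^2}\big)\big(\ep^{2}\|\nabla^2\bfpsi\|^2_{L^2}\big)+C\ep^{4}\|\nabla\bfpsi\|^4_{L^2}$ and absorb using the $\ep$-weighted $H^2$ bound for $\bfpsi$ just obtained together with $\Nhat\leq1$ from \eqref{focus}; this is why $\bfpsi$ has to be estimated before $\theta$. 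The remaining terms are handled with \eqref{def:A0123} via $\|\pt_t\theta\|^2_{L^2}\leq\ep^{2}A_3(t)$, $\|\nabla\theta\|^2_{L^2}\leq\ep^{2}A_1(t)$, $\ep^{2}\|\bfpsi\|^2_{L^2}\leq\ep^{2}A_0(t)$ and $\|\nabla\bfpsi\|^2_{L^2}\leq A_1(t)$, giving $\|\nabla^2\theta(t)\|^2_{L^2}\leq C\big(\ep^{2}A_3(t)+A_1(t)+A_0(t)\big)$.

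\medskip
To close, I would insert $A_0(t)\leq CA_0(0)$ from \Cref{basic}, $A_1(t)+A_2(t)\leq C\big(\ep^{-2}A_0(0)+A_1(0)+A_2(0)\big)$ from \Cref{lemmaH1}, and $\ep^{2}A_3(t)\leq C\big(\ep^{2}A_3(0)+A_2(0)+A_1(0)+\ep^{-2}A_0(0)\big)$ from \Cref{lemmaH2temp} into the two bounds above, which yields \eqref{est:supH2}. I expect the main obstacle to be the singular pressure term $\ep^{-2}\nabla P(\rho,\fT)$ in $\bfF$: one must recognize the cancellation $\nabla P(\trho,\tfT)=0$ and spend the $\ep^{2}$ weight against the $O(\ep)$ smallness of $\nabla\tfT,\nabla\trho$ and the $\Nhat$-smallness of $\varphi,\theta$ to beat the $\ep^{-2}$ factor; a secondary (ordering) point is the one noted above, that the $\ep$-weighted $H^2$ bound for $\bfpsi$ must be in hand before the quartic viscous term in $\tfH$ can be absorbed.
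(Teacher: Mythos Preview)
Your proposal is correct and follows essentially the same approach as the paper: apply \Cref{Lame lemma} to the momentum equation written as a Lam\'{e} system to bound $\ep^{2}\|\nabla^2\bfpsi\|^2_{L^2}$, apply the standard elliptic estimate to \eqref{eq:ellptictheta} to bound $\|\nabla^2\theta\|^2_{L^2}$, and then close with \Cref{basic}, \Cref{lemmaH1}, \Cref{lemmaH2temp}. The only cosmetic difference is that the paper handles the quartic term $\ep^{4}\|\nabla\bfpsi\|^4_{L^4}$ directly via $\|\nabla\bfpsi\|^2_{L^\infty}\leq\tC\Nhat\ep^{-2}$ from \eqref{ob-1} (so the $H^2$ bound on $\bfpsi$ is not actually needed first), whereas you route it through Gagliardo--Nirenberg and the just-proved $\ep^{2}\|\nabla^2\bfpsi\|^2_{L^2}$ bound; both work.
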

\begin{proof}
	Rewrite the equations $\eqref{eq}_2$ as the Lam\'{e} system
\begin{equation}\label{elliptic}
	\left.\begin{cases}
		-\mu\Delta \bfpsi-(\mu+\mup)\nabla\div\bfpsi
		=-\ep^{-2}\nabla P(\rho,\fT)
		-\rho\big(\pt_t\bfpsi+\bfu\nabla \bfpsi+\bfpsi\nabla\tbfu\big),
		\quad\text{in}\,\,\Omega,\\
		\bfpsi=0 \quad\quad\mathrm{on}\,\,\pt\Omega,\\
		\bfpsi\rightarrow 0\quad\quad \mathrm{as} \,\,|\bfx|\rightarrow \infty.
	\end{cases}
	\right.
\end{equation}

Applying \Cref{Lame lemma} to the boundary value problem \eqref{elliptic}, we obtain, by using \eqref{ob-1} and \eqref{eneA3}, that
\begin{equation}\notag
	\begin{aligned}
		\ep^2\|\nabla^2\bfpsi(t)\|^2_{L^2}
		\leq& C\big(\ep^{-2}\|
		\nabla P(\rho,\fT)\|^2_{H^1}
		+\ep^2\|\bfpsi(t)\|^2_{H^1}+ \ep^2\|\pt_{t}\bfpsi(t)\|^2_{L^2}\big)\\
		\leq& C\big(\ep^{-2}\|\varphi(t)\|^2_{H^1}+\ep^{-2}\|\theta(t)\|^2_{H^1}
		+\ep^2\|\bfpsi(t)\|^2_{H^1}+ \ep^2\|\pt_{t}\bfpsi(t)\|^2_{L^2}\big)\\
		\leq&C\big(\ep^2A_3(t)+A_2(t)+A_1(t)+\ep^{-2}A_0(t)\big)\\ \leq&C\big(\ep^2A_3(0)+A_2(0)+A_1(0)+\ep^{-2}A_0(0)\big),\quad\quad  \forall t\in[0,T].
	\end{aligned}
\end{equation}
Similarly, applying the classical elliptic theory \cite{ADN1959,ADN1964} to the boundary value problem \eqref{eq:ellptictheta}, we have 
\begin{equation}\notag
	\begin{aligned}
		\|\nabla^2\theta(t)\|^2_{L^2}
		\leq& C\big(\|\bfpsi(t)\|^2_{H^1}+\|\pt_{t}\theta(t)\|^2_{L^2}
		+\|\nabla\theta(t)\|^2_{L^2}\big)\\
		\leq& C\big(\ep^2A_3(t)+A_2(t)+A_1(t)+\ep^{-2}A_0(t)\big)\\
		\leq& C\big(\ep^2A_3(0)+A_2(0)+A_1(0)+\ep^{-2}A_0(0)\big), \quad\quad  \forall t\in[0,T].
	\end{aligned}
\end{equation}
The proof is completed.
\end{proof}

\begin{lemma}\label{lem:nabladivpt1psi}
	Suppose that \eqref{chi} holds, and suppose that $\Re<\Re_0$ and $\ep\leq\vep_2$ as in \Cref{lemma4}. Let $\hat{N}$ be defined as in \eqref{Nhat}. Then, there exists a positive constant $N_1$ depending only on $\Re$, $\Pr$, $\frac{\mup}{\mu}$ and $\gamma$, such that if $\Nhat\leq N_1$, then we have
	\begin{equation} \label{est:nabladivpt1psi}
		\begin{aligned}
		\int_0^t\|\nabla\div\pt_{1}\bfpsi(s)\|^2_{L^2}\md s
		\leq& C\big(\|\nabla\pt_{1}\varphi(0)\|^2_{L^2}
		+\ep^2\|\pt^2_{1}\bfpsi(0)\|^2_{L^2}\big)\\
		&+C\big(\ep^2A_3(0)+A_2(0)+A_1(0)+\ep^{-2}A_0(0)\big), \quad\quad  \forall t\in[0,T],
	\end{aligned}
	\end{equation}
		where the definitions of $A_0$, $A_1$, $A_2$ and $A_3$ are given in \eqref{def:A0123}. 
\end{lemma}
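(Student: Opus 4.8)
The plan is to carry the weighted energy estimate of \Cref{lemma2} one tangential derivative higher. Since $\pt_1\trho=\pt_1\tbfu=0$, differentiating the $x_1$-differentiated equations \eqref{pertur6} and \eqref{pertur7} once more in $x_1$ gives a transport equation for $\pt_1^2\varphi$ and a Lam\'e-type equation for $\pt_1^2\bfpsi$, whose sources are the $\pt_1$-derivatives of $\bfR_1,\bfR_2,\pt_1 f_1,\pt_1 f_2$. Multiplying the $\pt_1^2$-continuity equation by $\ep^{-2}\pt_1^2\varphi$, the $\pt_1^2$-momentum equation by $\pt_1^2\bfpsi$, adding and integrating over $\Omega$, the pressure term $\ep^{-2}\nabla\pt_1^2\varphi$ cancels against the continuity term $\ep^{-2}\pt_1^2\varphi\,\div\pt_1^2\bfpsi$ exactly as in the derivation of \eqref{div1}; this leaves $\tfrac{\md}{\md t}\big(\tfrac12\ep^{-2}\|\pt_1^2\varphi\|^2_{L^2}+\tfrac12\int\rho|\pt_1^2\bfpsi|^2\,\md\bfx\big)+\mu\|\nabla\pt_1^2\bfpsi\|^2_{L^2}+(\mu+\mup)\|\pt_1^2\div\bfpsi\|^2_{L^2}$ on the left against a remainder.

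For the remainder: the commutator pieces coming from $\pt_1\bfR_1,\pt_1\bfR_2,\pt_1^2 f_1$ are treated by the bounds \eqref{ob-0}--\eqref{ob-1}, the structure estimates analogous to \eqref{Landou1}--\eqref{est-Landou} with one extra $\pt_1$, and the smallness $\Nhat\le N_1$, which reduce them to $\Nhat^{1/2}$ times the just-produced dissipation or to quantities already estimated in $L^2_tL^2_x$ by \eqref{eneA0}, \eqref{eneA12}, \eqref{eneA3}, \eqref{ene5}, \eqref{ene6}; the variable-coefficient pressure correction $\ep^{-2}\nabla\pt_1^2 f_2$ is harmless because $f_2$ carries the factor $|\trho-1|+|\tfT-1|+|\varphi|+|\theta|=O(\ep)+O(\Nhat^{1/2})$, which supplies the missing power of $\ep$. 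The genuinely delicate term is the residual pressure--temperature contribution $-\ep^{-2}\int\pt_1^2\theta\,\pt_1^2\div\bfpsi\,\md\bfx$: a direct Young's inequality against the dissipation would cost a power of $\ep$, so instead I would integrate by parts in time through the $\pt_1^2$-continuity equation, in the spirit of \eqref{pt4}, turning it into $-\ep^{-2}\tfrac{\md}{\md t}\int\pt_1^2\theta\,\pt_1^2\varphi\,\md\bfx$ plus terms absorbable by \eqref{eneA3} and \eqref{ene5}. Integrating in time and invoking \eqref{eneA0}, \eqref{eneA12}, \eqref{eneA3} and the $H^1$- and $H^2$-estimates of the previous subsections, one controls $\sup_{[0,t]}\big(\ep^{-2}\|\pt_1^2\varphi\|^2_{L^2}+\|\pt_1^2\bfpsi\|^2_{L^2}\big)+\int_0^t\big(\|\nabla\pt_1^2\bfpsi\|^2_{L^2}+\|\pt_1^2\div\bfpsi\|^2_{L^2}\big)\,\md s$ by the right-hand side of \eqref{est:nabladivpt1psi}, the initial-data terms $\|\nabla\pt_1\varphi(0)\|^2_{L^2}$ and $\ep^2\|\pt_1^2\bfpsi(0)\|^2_{L^2}$ arising from the initial value of the energy functional.

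It remains to deduce $\int_0^t\|\nabla\div\pt_1\bfpsi\|^2_{L^2}\,\md s$ from this. Every component of $\nabla\div\pt_1\bfpsi$ is dominated pointwise by $|\nabla\pt_1^2\bfpsi|$ except $\pt_1\pt_2^2\psi^2$, which I would recover from the reduced normal-momentum identity \eqref{eqnormal2} differentiated once in $x_1$: $-(2\mu+\mup)\pt_1\pt_2^2\psi^2=\pt_1 R_3-\ep^{-2}\pt_1 G_1$, where by \eqref{def:R3}, \eqref{eq:G2} the term $\pt_1 R_3$ reduces to higher $x_1$-derivatives of $\bfpsi$ already bounded, to $\pt_t\pt_1\bfpsi$ and $\theta$-terms controlled through the $\ep$-weighted estimates \eqref{eneA3}, \eqref{est:supH2}, \eqref{ene5}, and to quadratically small pieces, while $\pt_1\tfT=\pt_1\pt_2\tfT=0$ gives $\pt_1 G_1=\tfT\,\pt_2\pt_1\varphi+\pt_2\tfT\,\pt_1\varphi$, whose $\ep^{-2}L^2$-norm is handled by the $\varphi$-estimates of the previous subsection and \eqref{est-pt1rho}. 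Assembling these bounds yields \eqref{est:nabladivpt1psi}. I expect the main obstacle to be keeping the top-order coupling between the pressure (through $\pt_1^2\varphi$) and the temperature (through $\pt_1^2\theta$) at the correct power of $\ep$, which is why the time--space integration by parts against the continuity equation, and not a plain energy identity, is needed, and why the hypothesis $\Nhat\le N_1$ is required to absorb the quadratic remainders generated by the extra differentiation.
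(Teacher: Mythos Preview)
Your overall plan agrees with the paper's: run the $\pt_1^2$ energy estimate to control $\int_0^t\|\nabla\pt_1^2\bfpsi\|_{L^2}^2\,\md s$, then recover the missing normal piece $\pt_1\pt_2^2\psi^2$ from the $x_1$--differentiated identity \eqref{eqnormal2}. The paper also does not use your time-IBP device for the $\pt_1^2\theta$--coupling; it treats that term by a direct Young inequality and absorbs it with the temperature dissipation already contained in $A_2$ (and $A_3$), so that step is simpler than you suggest.

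There is, however, a genuine gap in your last paragraph. You assert that $\ep^{-2}\pt_1 G_1=\ep^{-2}\big(\tfT\,\pt_1\pt_2\varphi+\pt_2\tfT\,\pt_1\varphi\big)$ is ``handled by the $\varphi$-estimates of the previous subsection and \eqref{est-pt1rho}''. This is false: the previous subsection gives $\ep^{-4}\int_0^t\|\pt_1\varphi\|_{L^2}^2\,\md s$ and $\ep^{-4}\int_0^t\|G_1\|_{L^2}^2\,\md s$ (both inside $A_2$), but supplies \emph{no} $L^2_tL^2_x$ control whatsoever on the mixed derivative $\pt_1\pt_2\varphi$ at any negative power of $\ep$. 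Since your recovery formula forces
\[
\int_0^t\|\pt_1\pt_2^2\psi^2\|_{L^2}^2\,\md s
\;\lesssim\;\ep^{-4}\!\int_0^t\|\pt_1\pt_2\varphi\|_{L^2}^2\,\md s+\cdots,
\]
and $\ep^{-4}\int_0^t\|\nabla\pt_1\varphi\|_{L^2}^2\,\md s$ is a constituent of $A_4$ itself, your argument is circular at this point.

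The paper fills precisely this hole with an additional energy identity, \eqref{ineq:pt2divpsi2}. It applies $\pt_1$ to both the normal momentum relation \eqref{eqnormal2} and the transport equation \eqref{eqnormal1} for $\pt_2\varphi$, obtaining \eqref{eq:pt1pt2psi}--\eqref{eq:pt1pt2varphi}, and then tests with $\pt_1\pt_2\varphi$ so that the $\pt_1\pt_2^2\psi^2$ contributions cancel. This is the exact tangential analogue of \Cref{lemma3} and produces a damping term $\ep^{-2}\int_0^t\|\pt_1\pt_2\varphi\|_{L^2}^2\,\md s$ controlled only by quantities already estimated at the $A_0$--$A_3$ level (plus the $\pt_1^2$ dissipation from the first step). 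Only after this auxiliary estimate can the algebraic recovery of $\pt_1\pt_2\div\bfpsi$ via \eqref{eq:pt2divpsi} be closed. You need to insert this step.
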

\begin{proof}
		Similar to the proof of \Cref{lemma2}, we first apply the operator $\pt^2_{1}$ on equations \eqref{eqf1} and \eqref{eqf2} to get
	\begin{equation}
		\pt_t\pt^2_{1}\varphi+\bfu\cdot\nabla\pt^2_{1}\varphi+\div(\pt^2_{1}\bfpsi)
		=\pt^2_{1}f_1+M_1,   \label{eq1t}
	\end{equation}
	and
	\begin{equation}
		\begin{aligned}
			&\rho\big(\pt_t\pt^2_{1}\bfpsi+\bfu\cdot\nabla\pt^2_{1}\bfpsi+\pt^2_{1}\bfpsi\cdot\nabla\tbfu\big)
			+\ep^{-2}\nabla\big(\pt^2_{1}\varphi+\pt^2_{1}\theta\big)
			-\mu\Delta\pt^2_{1}\bfpsi-(\mu+\mup)\nabla\div\pt^2_{1}\bfpsi\\
			=&\ep^{-2}\nabla\pt^2_{1}f_2+\bfM_2+\bfM_3,\label{eq2t}
		\end{aligned}
	\end{equation}
	where
	\begin{equation}\notag
		\begin{aligned}
			&M_1=-\pt^2_{1}\bfpsi\cdot\nabla\varphi-2\pt_{1}\bfpsi\cdot\nabla\pt_{1}\varphi, \quad
			\bfM_2=-\pt^2_{1}\varphi(\pt_t\bfpsi+\bfu\cdot\nabla\bfpsi+\bfpsi\cdot\nabla\tbfu), \\
			&\bfM_3=-2\pt_{1}\varphi(\pt_t\pt_{1}\bfpsi+\bfu\cdot\nabla\pt_{1}\bfpsi+\pt_{1}\bfpsi\cdot\nabla\bfpsi). 
		\end{aligned}
	\end{equation}
	Adding \eqref{eq1t} multiplied by $\ep^{-2}\pt^2_{1}\varphi$ to \eqref{eq2t} multiplied by $\pt^2_{1}\bfpsi$, and then applying the method of integration by parts to the resulting equation, we obtain, by using \eqref{def:N}, \eqref{eneA0}, \eqref{eneA12}, \eqref{eneA3} and the Young inequality, that
	\begin{equation}\label{ineq:pt2divpsi1}
		\begin{aligned}
			&\|(\ep^{-1}\pt^2_{1}\varphi,\pt^2_{1}\bfpsi)(t)\|^2_{L^2}
			+\int_{0}^{t}\big(\|\nabla\pt^2_{1}\bfpsi(s)\|^2_{L^2}
			+\|\pt^2_{1}\div\bfpsi(s)\|^2_{L^2}\big)\md s\\
			\leq&C\|(\ep^{-1}\pt^2_{1}\varphi,\pt^2_{1}\bfpsi)(0)\|^2_{L^2}
			+C\big(\ep^{2}A_3(t)+A_2(t)+A_1(t)+\ep^{-2}A_0(t)\big),  \quad\quad\forall t\in[0,T],
		\end{aligned}
	\end{equation}
	provided that $\widehat{N}$ is small enough (but independent of $\ep$ and $T$). In deriving \eqref{ineq:pt2divpsi1}, to address the term $\int_{0}^{t}\int \bfM_2\cdot\pt^2_{1}\bfpsi \md\bfx \md s$, we have used the fact that
	\begin{equation}\notag
		\begin{aligned}
			\int_{0}^{t}\int\pt^2_{1}\varphi\pt_t\bfpsi\cdot\pt^2_{1}\bfpsi \md\bfx \md s
			\leq&\int_{0}^{t}\|\pt^2_{1}\varphi(s)\|_{L^2}\|\pt_t\bfpsi(s)\|_{L^4}\|\pt^2_{1}\bfpsi(s)\|_{L^4}\md s\\
			\leq&  \int_{0}^{t}\|\pt^2_{1}\varphi(s)\|^2_{L^2}\|\nabla\pt^2_{1}\bfpsi(s)\|^2_{L^2}\md s
			+C\int_{0}^{t}\|\nabla\pt_t\bfpsi(s)\|^2_{L^2}\md s,\\
			\leq& \big(\sup_{s\in [0,t]}\|\pt^2_{1}\varphi(s)\|^2_{L^2} \big)\int_{0}^{t}\|\nabla\pt^2_{1}\bfpsi(s)\|^2_{L^2}\md s
			+C\int_{0}^{t}\|\nabla\pt_t\bfpsi(s)\|^2_{L^2}\md s\\
			\leq& N(t)\int_{0}^{t}\|\nabla\pt^2_{1}\bfpsi(s)\|^2_{L^2}\md s
			+C\int_{0}^{t}\|\nabla\pt_t\bfpsi(s)\|^2_{L^2}\md s, 
		\end{aligned}
	\end{equation}
	and other terms are treated similarly.
	
Next, applying the operator $\pt_{1}$ on equations \eqref{eqnormal2} and \eqref{eqnormal1} gives
\begin{equation}\label{eq:pt1pt2psi}
	-(2\mu+\mup)\pt_1\pt^2_{2}\psi^2+\ep^{-2}\tfT\pt_{1}\pt_{2}\varphi=R_5,
\end{equation}
and
\begin{equation}\label{eq:pt1pt2varphi}
	\pt_t\pt_{1}\pt_{2}\varphi+\bfu\cdot\nabla\pt_{1}\pt_{2}\varphi+\pt^2_{1}\varphi
	+\rho\pt_{1}\pt^2_{2}\psi^2
	 = R_6,   
\end{equation}
where
\begin{equation}\label{def:R56}
		R_5=\pt_{1}R_3-\pt_{1}\varphi\pt_{2}\tfT,\quad
		R_6=\pt_{1}R_4-\pt_{1}\bfpsi\cdot\pt_{2}\varphi-\pt_{1}\varphi\pt^2_{2}\psi^2.
\end{equation}
In \eqref{def:R56}, the definitions of $R_3$ and $R_4$ are given in \eqref{def:R3} and \eqref{def:R4}, respectively.

Adding the equation \eqref{eq:pt1pt2psi} multiplied by $\rho^{-1}\pt_{1}\pt_{2}\varphi$ to the equation \eqref{eq:pt1pt2varphi} multiplied by $(2\mu+\mup)\pt_{1}\pt_{2}\varphi$, we obtain, by using the method of integration by parts, and \eqref{def:N}, \eqref{eneA0}, \eqref{eneA12}, \eqref{eneA3}, \eqref{ineq:pt2divpsi1} and the Young inequality, that
	\begin{equation}\label{ineq:pt2divpsi2}
		\begin{aligned}
			&\|\pt_{1}\pt_{2}\varphi(t)\|^2_{L^2}+\ep^{-2}\int_{0}^{t}\|\pt_{1}\pt_{2}\varphi(s)\|^2_{L^2}\md s\\
			\leq&C\|\pt_{1}\pt_{2}\varphi(0)\|^2_{L^2}+ C\ep^{2}\int_{0}^{t}\big(\|\nabla\pt^2_{1}\bfpsi(s)\|^2_{L^2}+\|\bfpsi(s)\|^2_{H^2}+\|\pt_{t}\bfpsi(s)\|^2_{H^1}\big)\md s\\
			&+C\ep^{-2}\int_{0}^{t}\big(\|\pt_1\varphi(s)\|^2_{L^2}
			+\|\theta(s)\|^2_{H^2}\big)\md s\\
			\leq&C\|\nabla\pt_{1}\varphi(0)\|^2_{L^2}+C\ep^2\|\pt^2_{1}\bfpsi(0)\|^2_{L^2}\\
			&+C\big(\ep^2A_3(t)+A_2(t)+A_1(t)+\ep^{-2}A_0(t)\big),   
			 \quad\quad  \forall t\in[0,T],
		\end{aligned}
	\end{equation}
	provided that $\widehat{N}$ is small enough. In deriving the above inequality, to obtain the estimate for the terms containing $\pt_{2}\varphi$, we have used the fact from \eqref{def:N} and the Young inequality that
	\begin{equation}\notag
			\begin{aligned}
&\big|\int_{0}^{t}\int \pt_{1}\bfpsi\cdot\pt_{2}\varphi\pt_{1}\pt_{2}\varphi \md\bfx \md s\big|\\
		\leq&\int_{0}^{t}\|\pt_{1}\pt_{2}\varphi(s)\|_{L^2}\|\pt_1\bfpsi(s)\|_{L^4}
		\|\pt_{2}\varphi(s)\|_{L^4}\md s\\
	 \leq&\int_{0}^{t}\|\pt_{1}\pt_{2}\varphi(s)\|_{L^2}\|\pt_1\bfpsi(s)\|_{H^1}
		\|\pt_{2}\varphi(s)\|_{H^1}\md s\\
		\leq & \eta\ep^2\int_{0}^{t}\|\pt_{1}\pt_{2}\varphi(s)\|^2_{L^2}\md s
		+C\frac{1}{\eta}\ep^{-2}\sup_{s\in[0,t]}\|\pt_{2}\varphi(s)\|^2_{H^1}
		\int_{0}^{t}\|\pt_1\bfpsi(s)\|^2_{H^1}\md s,\\
		\leq & \eta\ep^2\int_{0}^{t}\|\pt_{1}\pt_{2}\varphi(s)\|^2_{L^2}\md s
		+C\frac{1}{\eta}\ep^{-2}\hat{N}\int_{0}^{t}\|\pt_1\bfpsi(s)\|^2_{H^1}\md s,
		\quad\quad \forall \eta>0,\\
			\end{aligned}
	\end{equation}
	and other terms involving $\pt_{2}\varphi$ can be dealt with similarly.
	
	 Finally, applying the operator $\pt_{1}$ to the equation \eqref{eq:pt2divpsi}, we obtain from \eqref{ob-0} and \eqref{ob-1} that 
	\begin{equation}\notag
		\begin{aligned}
			(2\mu+\mup)\pt_{1}\pt_{2}\div\bfpsi
			=&O(\ep^{-2})(|\pt_{1}\pt_{2}\varphi|+|\pt_{1}\pt_{2}\theta|+|\pt_1\varphi|+|\nabla\theta|)\\
			&+O(1)(|\pt_{1}\pt_t\bfpsi|+|\nabla^2\bfpsi|+|\nabla\bfpsi|+|\bfpsi|+|\nabla\pt^2_{1}\bfpsi|),
		\end{aligned}
	\end{equation}
	which, together with \eqref{ineq:pt2divpsi1}, \eqref{ineq:pt2divpsi2}, \eqref{eneA0}, \eqref{eneA12} and \eqref{eneA3}, leads to \eqref{est:nabladivpt1psi}.
	
	The proof is completed.
\end{proof}

\begin{lemma}\label{lem:H2tan}
	Suppose that \eqref{chi} holds, and suppose that $\Re<\Re_0$ and $\ep\leq\vep_2$ as in \Cref{lemma4}. Let $\hat{N}$ be defined as in \eqref{Nhat} and suppose that $\hat{N}\leq N_1$ as in \Cref{lem:nabladivpt1psi}. Then, it holds that
\begin{equation}\label{est:H2tan}
	\begin{aligned}
		&\int_{0}^{t}\big(\|\pt_{1}\bfpsi(s)\|^2_{H^2}
		+\|\pt_{2}^{3}\psi^1(s)\|_{L^2}^2
		+\ep^{-4}\|\nabla\pt_{1}\varphi\|^2_{L^2}\big)\md s\\
		\leq &C\int_{0}^{t}\big(\|\nabla\div\pt_{1}\bfpsi(s)\|^2_{L^2}
		+\|\pt_{t}\bfpsi(s)\|^2_{H^1}
		+\|\bfpsi(s)\|^2_{H^2}+\|G_1(s)\|^2_{L^2}\big)\md s\\
		&+C\int_{0}^{t}\ep^{-4}\big(\|\pt_{1}\varphi\|^2_{L^2}
		+\|\theta\|^2_{H^2}\big)\md s, 
	 \quad\quad \forall t\in[0,T].
	\end{aligned}
\end{equation} 
\end{lemma}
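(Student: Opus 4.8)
The plan is to prove \eqref{est:H2tan} by assembling three estimates, in parallel with \Cref{lemma2}--\Cref{lemma4} but one order higher in the tangential variable. The first is an $H^2$-bound for $\pt_1\bfpsi$ together with the companion bound on $\ep^{-2}\|\nabla\pt_1 P(\rho,\fT)\|_{L^2}$, obtained by viewing the $x_1$-differentiated momentum equation as a steady inhomogeneous Stokes system for $\pt_1\bfpsi$ and invoking \Cref{Stokes lemma}. The second extracts the contribution $\ep^{-4}\|\nabla\pt_1\varphi\|^2_{L^2}$ from the latter via the identity $\pt_1 P(\rho,\fT)=\fT\pt_1\varphi+\rho\pt_1\theta$. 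The third handles the pure normal derivative $\pt^3_2\psi^1$, which carries no $\pt_1$ and is therefore not controlled by $\|\pt_1\bfpsi\|_{H^2}$; for it I will differentiate the first component of $\eqref{eq}_2$ twice in $x_2$ and use \eqref{eq:pt1pt2psi} to trade the singular term $\ep^{-2}\tfT\pt_1\pt_2\varphi$ for the already-controlled $\pt_1\pt^2_2\psi^2$.

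For the first two estimates, starting from \eqref{pertur7} (equivalently, differentiating $\eqref{eq}_2$ in $x_1$ and using $\pt_1\trho=\pt_1\tbfu=0$), $\pt_1\bfpsi$ solves the Stokes system \eqref{eq:Stokes} with $g=\div\pt_1\bfpsi=\pt_1\div\bfpsi$, pressure $p=\ep^{-2}\pt_1 P(\rho,\fT)$, boundary condition $\pt_1\bfpsi=0$ on $\pt\Omega$ inherited from \eqref{eq4} (since $\pt_1$ is tangential to $\pt\Omega$), and
\[
\bfF=(\mu+\mup)\nabla\div\pt_1\bfpsi-\rho\big(\pt_t\pt_1\bfpsi+\bfu\cdot\nabla\pt_1\bfpsi+\pt_1\bfpsi\cdot\nabla\tbfu\big)-\rho\,\pt_1\bfpsi\cdot\nabla\bfpsi-\pt_1\varphi\big(\pt_t\bfpsi+\bfu\cdot\nabla\bfpsi+\bfpsi\cdot\nabla\tbfu\big).
\]
Using \eqref{ob-0}, \eqref{ob-1}, \Cref{GN lemma} and $\Nhat\leq1$ one gets $\|\bfF\|^2_{L^2}+\|g\|^2_{H^1}\leq C\big(\|\nabla\div\pt_1\bfpsi\|^2_{L^2}+\|\pt_t\bfpsi\|^2_{H^1}+\|\bfpsi\|^2_{H^2}\big)$, so \Cref{Stokes lemma} (with \Cref{Poincareineq2} to recover the lower-order part of the $H^2$-norm) yields the claimed bound for $\|\pt_1\bfpsi\|^2_{H^2}+\ep^{-4}\|\nabla\pt_1 P(\rho,\fT)\|^2_{L^2}$. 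Next, from $\fT\nabla\pt_1\varphi=\nabla\pt_1 P-\pt_1\varphi\nabla\fT-\pt_1\theta\nabla\rho-\rho\nabla\pt_1\theta$, dividing by $\fT$ and using $\inf_\Omega\fT>\tfrac12$, $\|\nabla\tfT\|_{L^\infty}+\|\nabla\trho\|_{L^\infty}\leq\tC\ep$, $\|(\nabla\varphi,\nabla\theta)\|^2_{L^\infty}\leq\tC\Nhat$ from \eqref{ob-0}--\eqref{ob-1},
\[
\ep^{-4}\|\nabla\pt_1\varphi\|^2_{L^2}\leq C\ep^{-4}\|\nabla\pt_1 P\|^2_{L^2}+C\ep^{-4}\|\pt_1\varphi\|^2_{L^2}+C\ep^{-4}\|\theta\|^2_{H^2},
\]
which together with the Stokes bound gives the $\|\pt_1\bfpsi\|^2_{H^2}$ and $\ep^{-4}\|\nabla\pt_1\varphi\|^2_{L^2}$ contributions to \eqref{est:H2tan}.

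For the normal derivative, the first component of $\eqref{eq}_2$ reads $\mu\pt^2_2\psi^1=\rho(\pt_t\psi^1+\bfu\cdot\nabla\psi^1+\psi^2)+\ep^{-2}\pt_1 P(\rho,\fT)-(2\mu+\mup)\pt^2_1\psi^1-(\mu+\mup)\pt_1\pt_2\psi^2$ (using $\bfpsi\cdot\nabla\tu^1=\psi^2$). Differentiating in $x_2$, using $\pt_2 P=G_1+G_2$ from \eqref{eq:normal2} and $\pt_1 G_1=\tfT\pt_1\pt_2\varphi+\pt_2\tfT\,\pt_1\varphi$, and then substituting $\ep^{-2}\tfT\pt_1\pt_2\varphi=R_5+(2\mu+\mup)\pt_1\pt^2_2\psi^2$ from \eqref{eq:pt1pt2psi}, the singular pieces $\ep^{-2}\pt_1 G_2$ cancel — they enter $R_5$ through $\pt_1 R_3$ with the opposite sign, by \eqref{def:R3} and \eqref{def:R56} — so that, after also writing $\pt_1\pt_2\div\bfpsi=\pt^2_1\pt_2\psi^1+\pt_1\pt^2_2\psi^2$ (and, where a second normal derivative of $\psi^2$ must be replaced, $\pt^2_2\psi^2=(2\mu+\mup)^{-1}(\ep^{-2}G_1-R_3)$ from \eqref{eqnormal2}, which accounts for the $\|G_1\|^2_{L^2}$ term in \eqref{est:H2tan}),
\[
\mu\pt^3_2\psi^1=O(1)\big(|\nabla^2\pt_1\bfpsi|+|\nabla\pt_t\bfpsi|+|\nabla^2\bfpsi|+|\nabla\bfpsi|+|\nabla\bfpsi|^2+|\nabla\varphi|(|\pt_t\bfpsi|+|\nabla\bfpsi|+|\bfpsi|)\big)+O(\ep^{-1})|\pt_1\varphi|.
\]
Here each second derivative of $\pt_1\bfpsi$ is bounded by $\|\pt_1\bfpsi\|_{H^2}$ from the first step, the quadratic terms are estimated by small multiples of $\|\bfpsi\|^2_{H^2}+\|\pt_t\bfpsi\|^2_{H^1}$ via \eqref{ob-1}, \Cref{GN lemma} and $\Nhat\leq1$, and $\ep^{-2}\|\pt_1\varphi\|^2_{L^2}\leq\ep^{-4}\|\pt_1\varphi\|^2_{L^2}$. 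Squaring, integrating in $s$ over $[0,t]$ and adding the three estimates proves \eqref{est:H2tan}.

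The step I expect to be the main obstacle is the control of the $O(\ep^{-2})$-singular pressure contributions. The Stokes estimate delivers only $\ep^{-2}\|\nabla\pt_1 P\|_{L^2}$, and the naive identity for $\pt^3_2\psi^1$ exhibits the isolated singular terms $\ep^{-2}\tfT\pt_1\pt_2\varphi$ and $\ep^{-2}\pt_1 G_2$, neither of which is available at the $\ep$-weight required; resolving this relies precisely on the cancellation built into $R_5$ through \eqref{eq:pt1pt2psi}--\eqref{def:R56}, on the reduction $\pt^2_2\psi^2\sim\ep^{-2}G_1$ from \eqref{eqnormal2}, and on $\pt_1\pt^2_2\psi^2=\pt^2_2(\pt_1\psi^2)$ being bounded through the first step, so that every singular term is re-expressed in terms of quantities already estimated in \Cref{lemma3}, \Cref{lemmaH1} or the first step. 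The remaining work — absorbing the numerous quadratic-in-perturbation errors with the smallness of $\Nhat$ — is routine and parallels the lower-order arguments of \Cref{lemma2}--\Cref{lemma3}.
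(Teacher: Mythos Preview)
Your proposal is correct. Steps 1 and 2 (Stokes estimate for $\pt_1\bfpsi$, then extracting $\ep^{-4}\|\nabla\pt_1\varphi\|^2_{L^2}$ from $\ep^{-4}\|\nabla\pt_1 P\|^2_{L^2}$) coincide with the paper's proof verbatim.

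For Step 3, the paper takes a shorter route than you do. Having already controlled $\ep^{-4}\|\nabla\pt_1\varphi\|^2_{L^2}$ in Step 2, the paper simply differentiates the first component of $\eqref{eq}_2$ once in $x_2$, writes the singular term as $\ep^{-2}\pt_2(\tfT\pt_1\varphi+\trho\pt_1\theta)$, and bounds it \emph{directly} by $C\ep^{-4}\big(\|\nabla\pt_1\varphi\|^2_{L^2}+\|\theta\|^2_{H^2}\big)$; the identity $\pt_2\varphi=\tfT^{-1}G_1-\tfT^{-1}\pt_2\tfT\,\varphi$ is used only to dispose of the stray $\pt_2\varphi$ arising in the lower-order terms, which is where the $\|G_1\|^2_{L^2}$ contribution in \eqref{est:H2tan} enters. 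Your route — substituting $\ep^{-2}\tfT\pt_1\pt_2\varphi$ via \eqref{eq:pt1pt2psi} and tracking the cancellation of $\ep^{-2}\pt_1 G_2$ through $R_5=\pt_1 R_3-\ep^{-2}\pt_1\varphi\,\pt_2\tfT$ — is valid and yields a pointwise identity with no singular remainder, but it is more work than necessary once Step 2 is in hand. Also, your parenthetical about replacing $\pt_2^2\psi^2$ via \eqref{eqnormal2} is superfluous: after your substitution the relevant term is $\pt_1\pt_2^2\psi^2\in\nabla^2\pt_1\bfpsi$, already controlled by $\|\pt_1\bfpsi\|_{H^2}$ from Step 1, so no further reduction (and hence no $G_1$) is needed along your path.
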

\begin{proof}
	We first consider the elliptic system obtained from \eqref{Stokes2},
	\begin{equation}\label{Stokestan}
		\left.\begin{cases}
			\div (\pt_{1}\bfpsi)=\pt_{1}g, \quad\quad\quad\quad\quad\text{in}\,\,\Omega,\\
			-\mu\Delta 	(\pt_{1}\bfpsi)+\ep^{-2}\nabla (\pt_{1}P(\rho,\fT))=\pt_{1}\bfF,
			\quad\quad\quad\text{in}\,\,\Omega,\\
			\pt_{1}\bfpsi=0 \quad\quad\mathrm{on}\,\,\pt\Omega,\\
			\pt_{1}\bfpsi\rightarrow 0\quad\quad \mathrm{as} \,\,|\bfx|\rightarrow \infty.
		\end{cases}
		\right.
	\end{equation}
	where the definitions of $g$ and $\bfF$ can be found in \eqref{def:gF}. 
	
	It follows from \Cref{Stokes lemma} that
	\begin{equation}\label{ineq:H2pt1psi}
		\begin{aligned}
			&\int_{0}^{t}\big(\|\pt_{1}\bfpsi(s)\|^2_{H^2}+\ep^{-4}\|\nabla\pt_{1}P(s)\|^2_{L^2}\big)\md s\\
			\leq & C\int_{0}^{t}\big(\|\pt_{1}g(s)\|^2_{L^2}+\|\pt_{1}\bfF(s)\|^2_{L^2}\big)\md s\\
			\leq &C\int_{0}^{t}\big(\|\nabla\div\pt_{1}\bfpsi(s)\|^2_{L^2}+\|\nabla\pt_{t}\bfpsi(s)\|^2_{L^2}
			+\|\bfpsi(s)\|^2_{H^2}\big)\md s,
		\end{aligned}
	\end{equation}
	
	Next, note the fact that
	\begin{equation}\notag
		\nabla\pt_{1}P(\rho,\fT)
		= \fT\nabla\pt_{1}\varphi+ \pt_{1}\varphi\nabla\fT +\varphi\nabla\pt_{1}\theta+\pt_{1}\theta\nabla\varphi.
	\end{equation}
	Thus, it follows from \eqref{ob-0} and \eqref{ob-1} that
	\begin{equation}\label{ineq:nablapt1phi}
		\int_{0}^{t}\|\nabla\pt_{1}\varphi(s)\|^2_{L^2}\md s
		\leq C\int_{0}^{t}\big(\|\nabla\pt_{1}P(s)\|^2_{L^2}
		+\|\pt_{1}\varphi(s)\|^2_{L^2}+\|\theta(s)\|^2_{H^2}\big)\md s.
	\end{equation}
	
	Then, it is obtained from the equation $\eqref{eq}_2$ that
	\begin{equation}\notag
		\begin{aligned}
			\mu\pt_{2}^{3}\psi^1
			=&-(\mu+\mu^\prime)\pt_{2}^2\pt_{1}\psi^2
			-(2\mu+\mu^\prime)\pt_{1}^{2}\pt_{2}\psi^1
	+\ep^{-2}\pt_{2}\big(\trho\pt_{1}\theta+\tfT\pt_{1}\varphi\big)\\
			&+\pt_{2}\big(\rho\pt_t\psi^1+\rho\bfu\cdot\nabla \psi^1+\rho\bfpsi\cdot\nabla\tu^1\big).
		\end{aligned}
	\end{equation}
	This, together with the fact that $\pt_{2}\varphi = \frac{1}{\tfT}G_1-\frac{\pt_2\tfT}{\tfT}\varphi$, implies
	\begin{equation}\label{ineq:pt222psi1}
		\begin{aligned}
	\int_{0}^{t}\|\pt_{2}^{3}\psi^1(s)\|_{L^2}^2\md s
	\leq	&C\int_{0}^{t}\big(\|\nabla^2\pt_{1}\bfpsi(s)\|^2_{L^2}
	+\|\pt_{t}\bfpsi(s)\|^2_{H^1}
	+\|\bfpsi(s)\|^2_{H^2} +\|G_1(s)\|^2_{L^2}\big)\md s\\
	+&C\ep^{-4}\int_{0}^{t}\big(\|\nabla\pt_{1}\varphi(s)\|^2_{L^2}
	+\|\theta(s)\|^2_{H^2}\big)\md s.
		\end{aligned}
	\end{equation}
	
	Finally, combining \eqref{ineq:nablapt1phi}, \eqref{ineq:H2pt1psi} and \eqref{ineq:pt222psi1} gives \eqref{est:H2tan}.
	
	The proof is completed.
	\end{proof}
	\begin{lemma}\label{lem:pt2G1pt222psi2}
		Suppose that \eqref{chi} holds, and suppose that $\Re<\Re_0$ and $\ep\leq\vep_2$ as in \Cref{lemma4}. Let $\hat{N}$ be defined as in \eqref{Nhat}. Then, there exists a positive constant $N_2$ depending only on $\Re$, $\Pr$, $\frac{\mup}{\mu}$ and $\gamma$, such that if $\Nhat\leq N_2$, then we have
		\begin{equation}\label{est:pt2G1pt222psi2}
			\begin{aligned}
			&\|\pt^2_{2}\varphi(t)\|^2_{L^2}
			+\ep^{-2}\int_{0}^{t}\|\pt_{2}G_1(s)\|^2_{L^2}\md s
	+\ep^2\int_{0}^{t}\|\pt^2_{3}\psi^2(s)\|^2_{L^2}\md s	\\ 
			\leq&C\big(\|\pt_{2}\varphi(t)\|^2_{H^1}
			+\|\varphi(0)\|^2_{H^2}\big)
			+C\ep^{2}\int_{0}^{t}\big(\|\nabla^2\pt_{1}\bfpsi(s)\|^2_{L^2}+\|\bfpsi(s)\|^2_{H^2}
			+\|\pt_{t}\bfpsi(s)\|^2_{H^1}\big)\md s\\
			&+C\ep^{-2}\int_{0}^{t}\big(\|\pt_{1}\pt_{2}\varphi(s)\|^2_{L^2}+\|G_1\|^2_{L^2}
			+\|\pt_{1}\varphi(s)\|^2_{L^2}
			+\|\pt_{t}\varphi(s)\|^2_{L^2}
			+\|\theta(s)\|^2_{H^2}\big)\md s,
				\end{aligned}
		\end{equation}
		for any $t\in[0,T]$.
	\end{lemma}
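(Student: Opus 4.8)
The plan is to run the weighted-energy argument of \Cref{lemma3} one derivative order higher. First I would differentiate the reduced normal-direction equations \eqref{eqnormal1} and \eqref{eqnormal2} once more in $x_2$. From \eqref{eqnormal2} this yields the elliptic-type relation $-(2\mu+\mup)\pt_2^3\psi^2+\ep^{-2}\pt_2 G_1=\pt_2 R_3$, while from \eqref{eqnormal1}, using $\pt_2\tbfu=(1,0)^\top$, one gets the transport-type relation
\[
\pt_t\pt_2^2\varphi+\bfu\cdot\nabla\pt_2^2\varphi+2\pt_1\pt_2\varphi+\rho\,\pt_2^3\psi^2=\pt_2 R_4-\pt_2\bfpsi\cdot\nabla\pt_2\varphi-\pt_2\rho\,\pt_2^2\psi^2 .
\]
Differentiating the definitions \eqref{def:R3}--\eqref{def:R4} and using \eqref{ob-0}, \eqref{ob-1}, \eqref{def:N} (and eliminating $\pt_2^2\psi^2$ through \eqref{eqnormal2}), I would record pointwise bounds of the schematic form $\pt_2 R_3=O(1)\big(|\nabla^2\pt_1\bfpsi|+|\nabla\pt_t\bfpsi|+|\nabla^2\bfpsi|+|\nabla\bfpsi|+|\bfpsi|\big)+O(\ep^{-2})\big(|\theta|+|\nabla\theta|+|\nabla^2\theta|\big)$, together with the analogous bound for the right-hand side of the transport relation.

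Next I would add the transport relation multiplied by $(2\mu+\mup)\ep^{-2}\pt_2 G_1$ to the elliptic relation multiplied by $\ep^{-2}\rho\,\pt_2 G_1$ and integrate over $\Omega$. The $\pt_2^3\psi^2$ contributions cancel, leaving the coercive term $\ep^{-4}\int\rho|\pt_2 G_1|^2\,\md\bfx\ge\frac12\ep^{-4}\|\pt_2 G_1\|_{L^2}^2$ on the left. The time-derivative term $(2\mu+\mup)\ep^{-2}\int\pt_t\pt_2^2\varphi\,\pt_2 G_1\,\md\bfx$ is dealt with exactly as in \eqref{eq:temp-H1}: expanding $\pt_2 G_1=\tfT\pt_2^2\varphi+2\pt_2\tfT\,\pt_2\varphi+\varphi\,\pt_2^2\tfT$ and using that $\tfT$ is time-independent with $\pt_2\tfT=O(\ep)$ and $\pt_2^2\tfT=O(\ep^2)$ (in fact constant) by \eqref{ob-0}, one extracts a total time derivative $\frac{\md}{\md t}\big[(2\mu+\mup)\ep^{-2}\int(\frac12\tfT|\pt_2^2\varphi|^2+\text{correctors in }\pt_2\varphi,\varphi)\,\md\bfx\big]$; the residual terms containing $\pt_t\varphi$ and $\pt_t\pt_2\varphi$ are absorbed after substituting $\pt_t\pt_2\varphi$ from \eqref{eqnormal1} and trading $\pt_2\varphi$ for $G_1$ via $\pt_2\varphi=\tfT^{-1}(G_1-\varphi\,\pt_2\tfT)$, so that only $\ep^{-2}\|G_1\|_{L^2}^2$, $\ep^{-2}\|\pt_t\varphi\|_{L^2}^2$ and an absorbable small multiple of $\ep^{-4}\|\pt_2^2\varphi\|_{L^2}^2$ survive.

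The remaining right-hand terms are bounded by the H\"older and Young inequalities. The convection term is split as $\bfu=\tbfu+\bfpsi$: the $\tbfu$-part collapses, through the algebraic identities $\div(\tfT\tbfu)=0$, $\div(\pt_2\tfT\,\tbfu)=0$ and $\tbfu\cdot\nabla\varphi=\tu^1\pt_1\varphi$ used in \Cref{lemma3}, to a single term $\propto\ep^{-2}\int\tu^1\,\pt_2\tfT\,\pt_1\pt_2\varphi\,\pt_2^2\varphi$ that Young's inequality handles (feeding $\ep^{-2}\|\pt_1\pt_2\varphi\|_{L^2}^2$ to the right); the $\bfpsi$-part is integrated by parts in space so that no third-order derivative of $\varphi$ is produced, and is then absorbed using $\|\bfpsi\|_{L^\infty}$, $\|\nabla\bfpsi\|_{L^\infty}$, $\|\div\bfpsi\|_{L^\infty}$ from \eqref{ob-1} and the smallness of $\Nhat$. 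The term $2(2\mu+\mup)\ep^{-2}\int\pt_1\pt_2\varphi\,\pt_2 G_1$ and the remainders $\ep^{-2}\int\rho\,\pt_2 R_3\,\pt_2 G_1$ and $(2\mu+\mup)\ep^{-2}\int(\pt_2 R_4-\pt_2\bfpsi\cdot\nabla\pt_2\varphi-\pt_2\rho\,\pt_2^2\psi^2)\,\pt_2 G_1$ are controlled similarly, absorbing a small multiple of $\ep^{-4}\|\pt_2 G_1\|_{L^2}^2$ into the left and, wherever products of top-order quantities appear, a small $\Nhat$-multiple of $\ep^{-4}\|\pt_2^2\varphi\|_{L^2}^2$ and $\ep^2\|\nabla^2\pt_1\bfpsi\|_{L^2}^2$ — exactly the mechanism used for the $\int_0^t\int\pt_1^2\varphi\,\pt_t\bfpsi\cdot\pt_1^2\bfpsi$-type terms in the proof of \Cref{lem:nabladivpt1psi}.

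Integrating over $[0,t]$, bounding the weighted energy below by $c\,\ep^{-2}\|\pt_2^2\varphi(t)\|_{L^2}^2-C\ep^{-2}\|\pt_2\varphi(t)\|_{L^2}^2-C\ep^{-2}\|\varphi(t)\|_{L^2}^2$ (a $\nabla\tfT$-smallness computation as in \Cref{lemma3}) and above at $t=0$ by $C\ep^{-2}\|\varphi(0)\|_{H^2}^2$, and then multiplying through by $\ep^2$, I would obtain the bound for $\|\pt_2^2\varphi(t)\|_{L^2}^2+\ep^{-2}\int_0^t\|\pt_2 G_1\|_{L^2}^2$, the time-$t$ correctors producing the term $C\|\pt_2\varphi(t)\|_{H^1}^2$ on the right. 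The last left-hand term $\ep^2\int_0^t\|\pt_2^3\psi^2\|_{L^2}^2$ then follows directly from $\pt_2^3\psi^2=(2\mu+\mup)^{-1}(\ep^{-2}\pt_2 G_1-\pt_2 R_3)$ by squaring and integrating, using the bound just obtained together with \eqref{eneA0}, \eqref{eneA12}, \eqref{eneA3} and \eqref{est:nabladivpt1psi} for the lower-order leftovers; collecting everything yields \eqref{est:pt2G1pt222psi2}. I expect the cross term $\ep^{-2}\int\pt_t\pt_2^2\varphi\,\pt_2 G_1\,\md\bfx$ to be the main obstacle: since $\pt_2 G_1$ itself carries the top-order quantity $\pt_2^2\varphi$, the time derivative cannot simply be transferred onto a lower-order factor, and one must integrate by parts in $x_2$ and exploit the smallness of $\pt_2\tfT$, $\pt_2^2\tfT$, all the while keeping the $\ep$-bookkeeping tight enough that only $\ep^{-2}\int\|G_1\|_{L^2}^2$ (rather than $\ep^{-4}\int\|G_1\|_{L^2}^2$) and the displayed quantities appear on the right of \eqref{est:pt2G1pt222psi2}.
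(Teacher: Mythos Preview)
Your overall plan follows the template of \Cref{lemma3}, and much of what you write is sound; but there is a real difficulty at precisely the point you flag as the ``main obstacle,'' and the paper resolves it by a different route than the one you sketch.

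The paper does \emph{not} work with the transport equation for $\pt_2^2\varphi$. Instead it first substitutes $\pt_2\varphi=\tfT^{-1}(G_1-\varphi\,\pt_2\tfT)$ into \eqref{eqnormal1} to obtain a transport equation for $G_1$ itself,
\[
\tfrac{1}{\tfT}\,\pt_t G_1+\tfrac{1}{\tfT}\,\bfu\cdot\nabla G_1+\pt_1\varphi+\rho\,\pt_2^2\psi^2=R_7,
\]
and only then applies $\pt_2$. After this rewriting the leading time-derivative term is $\tfT^{-1}\pt_t\pt_2 G_1$, so testing with $(2\mu+\mup)\pt_2 G_1$ produces the clean energy $\tfrac{\md}{\md t}\int\tfrac{2\mu+\mup}{2\tfT}|\pt_2 G_1|^2\,\md\bfx$ with no cross term of the type $\int\pt_t\pt_2^2\varphi\cdot\pt_2 G_1$ at all. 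The sup-estimate for $\pt_2^2\varphi$ is recovered \emph{a posteriori} from $\pt_2^2\varphi=\tfT^{-1}\pt_2 G_1+O(\ep)\pt_2\varphi+O(\ep^2)\varphi$, which only costs the pointwise corrector on the right of \eqref{est:pt2G1pt222psi2}.

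Your route---testing the $\pt_2^2\varphi$-equation with $\ep^{-2}\pt_2 G_1$---runs into trouble at the step where you claim ``an absorbable small multiple of $\ep^{-4}\|\pt_2^2\varphi\|_{L^2}^2$'' survives. The only dissipation available is $\ep^{-4}\|\pt_2 G_1\|_{L^2}^2$, and converting $\|\pt_2^2\varphi\|^2$ to $\|\pt_2 G_1\|^2$ via $\pt_2 G_1=\tfT\pt_2^2\varphi+2\pt_2\tfT\,\pt_2\varphi+\pt_2^2\tfT\,\varphi$ inevitably produces a residual of the form $C\delta\int_0^t\|\varphi(s)\|_{L^2}^2\,\md s$ (from the $\pt_2^2\tfT\,\varphi$ tail). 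This quantity is nowhere controlled in the paper's framework---indeed, the absence of any $L^2_tL^2_x$ bound on $\varphi$ in the infinite channel is the very reason $G_1$ is introduced. One could in principle try to excise every such residual by further time-integrations-by-parts (the offending coefficients are $t$-independent), but that is considerably more delicate than your sketch indicates, and it is not what the paper does; the rewriting in terms of $G_1$ sidesteps the issue entirely.
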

	\begin{proof}
    Noting that $\pt_{2}\varphi = \frac{1}{\tfT}G_1-\frac{\pt_2\tfT}{\tfT}\varphi$, the equation \eqref{eqnormal1} is rewritten as
    \begin{equation}\label{eq:G1}
    	\frac{1}{\tfT}\pt_{t}G_1+\frac{1}{\tfT}\bfu\cdot\nabla G_1
    	+\pt_{1}\varphi+\rho\pt_{2}^2\psi^2
    	=R_7,
    \end{equation}
    where
    \begin{equation}\notag
    	R_7=R_4 +\frac{\pt_{2}\tfT}{\tfT}\pt_{t}\varphi-\frac{\pt_{2}\tfT}{\tfT}\psi^2 G_1
    	-\frac{1}{\tfT}\bfu\cdot\nabla G_1
    	+\bfu\cdot\nabla(\frac{\pt_{2}\tfT}{\tfT})\varphi
    	+\frac{\pt_{2}\tfT}{\tfT}\bfu\cdot\nabla \varphi,
    \end{equation}
    and the definition of $R_4$ is given in \eqref{def:R4}.
    
	Applying the operator $\pt_{2}$ on equations \eqref{eq:G1} and \eqref{eqnormal2}, we obtain
	\begin{equation}\label{eq:ptx2G1}
			\frac{1}{\tfT}\pt_{t}\pt_{2}G_1+\frac{1}{\tfT}\bfu\cdot\nabla\pt_{2} G_1
		+\pt_{1}\pt_{2}\varphi+\rho\pt_{2}^3\psi^2
		=\pt_{2}R_7-\pt_{2}(\frac{\bfu}{\tfT})\cdot\nabla G_1-\pt_{2}\rho\pt_{2}^2\psi^2,  
	\end{equation}
	and
		\begin{equation}\label{eq:ptx22psi2}
		-(2\mu+\mup)\pt^2_{3}\psi^2
		+\ep^{-2}\pt_{2}G_1=\pt_{2}R_3.
	\end{equation}
	where the definition of $R_3$ is given in \eqref{def:R3}.
	
	Noting the facts from \eqref{def:G} and \eqref{def:Couette} that 
	\begin{equation}\notag
		\pt_{1}G_1=\tfT\pt_1\pt_2\varphi+\pt_1\pt_2\tfT,\quad
		 \pt_{2}\big(\frac{u^1}{\tfT})=\pt_{2}(\frac{x_2+\psi^1}{\tfT}\big),\quad
		 \pt_{2}\big(\frac{u^2}{\tfT})=\pt_{2}(\frac{\psi^2}{\tfT}\big),
	\end{equation}
	we obtain from \eqref{ob-1} that 
	\begin{equation}\label{ineq:nablaG1}
		\begin{aligned}
				\|\pt_{2}(\frac{\bfu}{\tfT})\cdot\nabla G_1\|_{L^2}
			\leq&  \|\pt_{2}(\frac{1+\psi^1}{\tfT})\|_{L^\infty}\|\pt_{1} G_1\|_{L^2} 
			 + \|\pt_{2}(\frac{\psi^2}{\tfT})\|_{L^\infty}\|\pt_{2} G_1\|_{L^2} \\
			\leq& C(\hat{N}+\ep^{-1}\hat{N})\|\pt_{1} G_1\|_{L^2}  + C\ep^{-1}\hat{N}\|\pt_{2} G_1\|_{L^2} \\
			\leq& C\ep^{-1}\big(\|\pt_{1}\pt_{2}\varphi\|_{L^2} 
			+ \|\pt_{1}\varphi\|_{L^2}\big)  + C\ep^{-1}\hat{N}\|\pt_{2} G_1\|_{L^2}.
		\end{aligned}
	\end{equation}
	Thus, in analogy with derivation of the estimate \eqref{div3}, adding the equation \eqref{eq:ptx2G1} multiplied by $(2\mu+\mup)\pt_{2}G_1$ to the equation \eqref{eq:ptx22psi2} multiplied by $\rho^{-1}\pt_{2}G_1$, and then integrating the resulting equation over $\Omega\times[0,t]$, we obtain, by using \eqref{def:N}, \eqref{ineq:nablaG1} and the Young inequality, that
	\begin{equation} \label{ineq:pt2G1}
		\begin{aligned}
			&\|\pt_{2}G_1(t)\|^2_{L^2}+\ep^{-2}\int_{0}^{t}\|\pt_{2}G_1(s)\|^2_{L^2}\md s\\
			\leq&C\|\varphi(0)\|^2_{H^2}+ C\ep^{2}\int_{0}^{t}\big(\|\nabla^2\pt_{1}\bfpsi(s)\|^2_{L^2}+\|\bfpsi(s)\|^2_{H^2}
			+\|\pt_{t}\bfpsi(s)\|^2_{H^1}\big)\md s\\
			&+C\ep^{-2}\int_{0}^{t}\big(\|\pt_{1}\pt_{2}\varphi(s)\|^2_{L^2}
			+\|\pt_{1}\varphi(s)\|^2_{L^2}+\|\pt_{t}\varphi(s)\|^2_{L^2}
			+\|\theta(s)\|^2_{H^2}\big)\md s,
		\end{aligned}
	\end{equation}
	provided that $\widehat{N}$ is small enough. 
	
     Next, the equation \eqref{eq:ptx22psi2} gives
     \begin{equation}\notag
     	\begin{aligned}
     		\pt^2_{3}\psi^2
     		=&O(\ep^{-2})(|\pt_{2}G_1|+|\pt^2_{2}\theta|+|G_1|+|\nabla\theta|)\\
     		&+O(1)(|\pt_{2}\pt_t\bfpsi|+|\nabla^2\bfpsi|+|\nabla\bfpsi|+|\bfpsi|
     		+|\nabla^2\pt_{1}\bfpsi|),
     	\end{aligned}
     \end{equation}
     This leads to
     \begin{equation}\label{ineq:pt222psi2}
     	\begin{aligned}
     	\int_{0}^{t}\ep^2\|\pt^2_{3}\psi^2(s)\|^2_{L^2}\md s
     		\leq & C\ep^2\int_{0}^{t}\big(\|\nabla^2\pt_{1}\bfpsi(s)\|^2_{L^2}+\|\bfpsi(s)\|^2_{H^2}
     		+\|\pt_{t}\bfpsi(s)\|^2_{H^1}\big)\md s\\
     		&+C\ep^{-2}\int_{0}^{t}\big(\|\pt_{2}G_1\|^2_{L^2}+\|G_1\|^2_{L^2}
     		+\|\theta(s)\|^2_{H^2}\big)\md s.
     	\end{aligned}
     \end{equation}

     In addition, it is obtained from \eqref{def:G} that
     \begin{equation}\notag
     	\begin{aligned}
     		\pt_{2}^2\varphi
     		=&\frac{1}{\tfT}\pt_2 G_1 	+\frac{\pt_{2}\tfT}{\tfT}G_1-\pt_{2}\tfT\pt_2\varphi
     		-\pt_{2}\big(\frac{\pt_{2}\tfT}{\tfT}\big)\varphi\\
     		=&O(\pt_{2}G_1)+O(\pt_{2}\varphi)+O(\varphi),
     	\end{aligned}
     \end{equation}
     which implies 
     \begin{equation}\label{ineq:pt22phi}
     	\|\pt^2_{2}\varphi(t)\|_{L^2}
     	\leq C\big(	\|\pt_{2}G_1(t)\|^2_{L^2}
     	+\|\varphi(t)\|^2_{H^1}\big), \quad\quad\forall t\in[0,T].
     \end{equation}
     
  Finally, combining \eqref{ineq:pt2G1},\eqref{ineq:pt222psi2} and \eqref{ineq:pt22phi} gives \eqref{est:pt2G1pt222psi2}.
  
  The proof is completed.
\end{proof}

	\begin{lemma}\label{lem:nabla3theta}
	Suppose that \eqref{chi} holds, and suppose that $\Re<\Re_0$ and $\ep\leq\vep_2$ as in \Cref{lemma4}. Let $\hat{N}$ be defined as in \eqref{Nhat} and suppose that $\hat{N}\leq N_2$ as in \Cref{lem:pt2G1pt222psi2}. Then, it holds that
	\begin{equation}\label{est:nabla3theta}
		\begin{aligned}
			\int_{0}^{t}\|\nabla^{3}\theta(s)\|^2_{L^2}\md s	
			\leq &   C\ep^{-4}\int_0^t\big(\|\pt_{2}\theta(s)\|^2_{L^2}
			+\|\pt_{1}\varphi(s)\|^2_{L^2}
			+\|G_1(s)\|^2_{L^2}\big)\md s\\
			&+C\int_0^t\big(
			\|\pt_{t}\theta(s)\|^2_{L^2}
			+\|\bfpsi(s)\|^2_{H^2}+\|\theta(s)\|^2_{H^2}		
			\big)\md s,\quad\forall t\in[0,T].\\
		\end{aligned}
	\end{equation}
\end{lemma}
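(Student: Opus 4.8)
The plan is to treat the temperature equation $\eqref{eq}_3$ as the linear elliptic Dirichlet problem \eqref{eq:ellptictheta}, $-\kappa\Delta\theta=\tfH$ in $\Omega$ with $\theta|_{\pt\Omega}=0$ and $\theta\to0$ at infinity, where $\tfH$ is given by \eqref{def:H}, and to upgrade the $H^2$-bounds of \Cref{lemma5} and \Cref{lem:supH2} to an $H^3$-bound by differentiating in the tangential variable $x_1$. Since $x_1$ is tangential to the flat walls, $\pt_1\theta$ again solves a homogeneous Dirichlet problem $-\kappa\Delta(\pt_1\theta)=\pt_1\tfH$, so the classical elliptic estimate (the Agmon--Douglis--Nirenberg theory already invoked for \eqref{eq:ellptictheta}) gives $\|\nabla^2\pt_1\theta\|_{L^2}\le C\big(\|\pt_1\tfH\|_{L^2}+\|\pt_1\theta\|_{L^2}\big)$, which already controls the three components $\pt_1^3\theta$, $\pt_1^2\pt_2\theta$, $\pt_1\pt_2^2\theta$ of $\nabla^3\theta$. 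The remaining purely normal derivative is recovered algebraically from the equation itself: from $\pt_2^2\theta=-\kappa^{-1}\tfH-\pt_1^2\theta$ one further application of $\pt_2$ yields $\pt_2^3\theta=-\kappa^{-1}\pt_2\tfH-\pt_1^2\pt_2\theta$, and the last term has just been bounded. Thus everything reduces to estimating $\|\nabla\tfH\|_{L^2}$.

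Next I would expand $\nabla\tfH$ term by term using \eqref{def:H}. The convective pieces $\nabla(\rho\bfu\cdot\nabla\theta)$ and $\nabla(\rho\bfpsi\cdot\nabla\tfT)$ and the viscous-dissipation piece $\ep^2\nabla(2\mu|\Dpsi|^2+\mup(\div\bfpsi)^2+4\mu\Dtbfu:\Dpsi)$ are benign: using \eqref{ob-0}--\eqref{ob-1}, \Cref{GN lemma}, the bounds $\|\nabla^k\tfT\|_{L^\infty}=O(\ep^k)$ and the smallness $\widehat N\le1$ from \eqref{focus}, they are dominated by $C(\|\bfpsi\|_{H^2}^2+\|\theta\|_{H^2}^2)$ plus lower-order terms already covered by \Cref{lemmaH1}. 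For the pressure contribution $\nabla(P(\rho,\fT)\div\bfpsi)$ I would use $\pt_2 P(\rho,\fT)=G_1+G_2$ from \eqref{eq:normal2}, the identity $\pt_2\varphi=\tfT^{-1}G_1-(\pt_2\tfT/\tfT)\varphi$, and the momentum equation $\eqref{eq}_2$ --- which, via the Lamé estimate of \Cref{Lame lemma}, bounds $\|\bfpsi\|_{H^2}$, and in particular $\|\nabla\div\bfpsi\|_{L^2}$, by $\ep^{-2}\|\nabla P(\rho,\fT)\|_{L^2}$ plus lower order; this is precisely where the $\ep^{-4}$-weighted quantities $\|\pt_1\varphi\|_{L^2}^2$, $\|\pt_2\theta\|_{L^2}^2$ and $\|G_1\|_{L^2}^2$ appearing in \eqref{est:nabla3theta} are generated. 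The genuinely delicate term is $\rho\,\pt_i\pt_t\theta$ in $\pt_i\tfH$, which formally carries one spatial derivative too many; I would control it using the already-established estimate \eqref{eneA3} for $A_3$ (which contains $\int_0^t\ep^{-2}\|\nabla\pt_t\theta\|_{L^2}^2\,\md s$), if necessary after first trading $\pt_i\pt_t\theta$ for $\Delta\pt_i\theta$ and lower-order quantities through the $x_i$-differentiated equation $\eqref{eq}_3$, so that essentially only $\pt_t\theta$ itself (the surviving extra derivative having landed on the $O(1)$ coefficient $\rho$, since $\pt_i\rho=\pt_i\varphi$ is bounded by \eqref{ob-1}) remains explicitly.

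Finally I would square, integrate over $[0,t]$, and substitute the estimates already obtained in this section --- \eqref{eneA0}, \eqref{eneA12}, \eqref{eneA3}, and \Cref{lem:nabladivpt1psi}, \Cref{lem:H2tan}, \Cref{lem:pt2G1pt222psi2} for the various second- and third-order combinations of $\bfpsi$ and $\varphi$ that enter $\nabla\tfH$ --- to arrive at \eqref{est:nabla3theta}. The step I expect to be the main obstacle is exactly the bookkeeping around $\rho\,\pt_i\pt_t\theta$: it has to be rewritten so that the right-hand side ends up expressed only in terms of the listed quantities, with no uncontrolled top-order term left over. Once that is arranged, the remaining work is a routine, if lengthy, matter of Hölder's and Young's inequalities together with the smallness in \eqref{focus}.
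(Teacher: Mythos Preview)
Your plan matches the paper's proof: apply $\pt_1$ to \eqref{eq:ellptictheta}, use the $H^2$ elliptic estimate for $\pt_1\theta$ to control the mixed third derivatives, then recover $\pt_2^3\theta$ algebraically from $\kappa\pt_2^3\theta=-\kappa\pt_1^2\pt_2\theta-\pt_2\tfH$, so that everything reduces to $\|\nabla\tfH\|_{L^2}$.

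Two places where you overcomplicate the paper's argument. First, the pressure piece $\nabla(P\div\bfpsi)$ needs no Lam\'e detour: $\|P\,\nabla\div\bfpsi\|_{L^2}\le C\|\bfpsi\|_{H^2}$ is already on the right of \eqref{est:nabla3theta}, and the cross terms $\pt_i P\cdot\div\bfpsi$ give $\|\pt_1\varphi\|$, $\|G_1\|$, $\|\nabla\theta\|$ directly via \eqref{eq:normal2}; the $\ep^{-4}$ weights in the statement are a harmless overestimate (since $\ep\le1$), not the output of a momentum-equation bootstrap. Second, your proposed ``trading $\pt_i\pt_t\theta$ for $\Delta\pt_i\theta$'' via the differentiated temperature equation is circular, since $\Delta\pt_i\theta$ is precisely the quantity being estimated. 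The term $\rho\,\pt_i\pt_t\theta$ in $\pt_i\tfH$ is genuinely there; the clean resolution is simply to allow $\int_0^t\|\nabla\pt_t\theta\|_{L^2}^2$ on the right (the paper's $\|\pt_t\theta\|_{L^2}^2$ appears to be shorthand or a minor imprecision) and note that this is $\le\ep^2A_3(t)$, already controlled by \eqref{eneA3} when the estimate is fed into \Cref{lemmaH2}.
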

\begin{proof}
	Applying the operator $\pt_{1}$ on the elliptic system \eqref{eq:ellptictheta} leads to
\begin{equation}\label{eq:pt1theta}
	\left.\begin{cases}
		-\kappa\Delta \pt_{1}\theta=\pt_{1}\tfH
			\quad\quad\text{in}\,\,\Omega, \\
		 \pt_{1}\theta=0 \quad\quad\mathrm{on}\,\,\pt\Omega,\\
		 \pt_{1}\theta\rightarrow 0\quad\quad \mathrm{as} \,\,|\bfx|\rightarrow \infty,
	\end{cases}
	\right.
\end{equation}
where $\tfH$ is defined as in \eqref{def:H}. Based on the above system, it is obtained from the classical elliptic theory \cite{ADN1959,ADN1964} that
\begin{equation}\label{ineq:nabla2pt1theta}
	\begin{aligned}
			\int_0^t\|\nabla^2\pt_{1}\theta(s)\|^2_{L^2}\md s
			\leq & 	C\int_0^t\|\pt_{1}\tfH(s)\|^2_{L^2}\md s\\
			\leq &   C\ep^{-4}\int_0^t\big(\|\pt_{1}\varphi(s)\|^2_{L^2}
			+\|\pt_{1}\theta(s)\|^2_{L^2}\big)\md s\\
			&+C\int_0^t\big(
			\|\pt_{t}\theta(s)\|^2_{L^2}
			+\|\bfpsi(s)\|^2_{H^2}+\|\theta(s)\|^2_{H^2}		
			\big)\md s.\\
	\end{aligned}
\end{equation}

Then, applying the operator $\pt_{2}$ on the elliptic system \eqref{eq:ellptictheta} gives the equation
\begin{equation}\notag
	\kappa\pt_{2}^3\theta=-\kappa\pt_{1}^2\pt_{2}\theta-\pt_{2}\tfH,
\end{equation}
which implies
\begin{equation}\label{ineq:pt222theta}
	\begin{aligned}
	\int_0^t\|\pt_{2}^3\theta(s)\|^2_{L^2}\md s
	\leq & 	C\int_0^t\big(\|\nabla^2\pt_{1}\theta(s)\|^2_{L^2}
	+\|\pt_{2}\tfH(s)\|^2_{L^2}\big)\md s\\
	\leq &   C\ep^{-4}\int_0^t\big(\|\pt_{2}\theta(s)\|^2_{L^2}
	+\|G_1(s)\|^2_{L^2}\big)\md s\\
	&+C\int_0^t\big(
	\|\pt_{t}\theta(s)\|^2_{L^2}
	+\|\bfpsi(s)\|^2_{H^2}+\|\theta(s)\|^2_{H^2}		
	\big)\md s.\\
\end{aligned}
\end{equation}

Finally, the estimate \eqref{est:nabla3theta}  follows from \eqref{ineq:nabla2pt1theta} and \eqref{ineq:pt222theta}.

The proof is completed.
\end{proof}

Here we summarize the results from \Cref{lem:supH2,lem:nabladivpt1psi,lem:H2tan,lem:pt2G1pt222psi2,lem:nabla3theta} as the following lemma.
	\begin{lemma}\label{lemmaH2}
	Suppose that \eqref{chi} holds, and suppose that $\Re<\Re_0$ and $\ep\leq\vep_2$ as in \Cref{lemma4}. Let $\hat{N}$ be defined as in \eqref{Nhat} and suppose that $\hat{N}\leq N_2$ as in \Cref{lem:pt2G1pt222psi2}. Then, it holds that
	\begin{equation}\label{eneA4}
		\ep^{2}A_4(t)\leq C\big(\ep^2A_4(0)+\ep^2A_3(0)+A_2(0)+A_1(0)+\ep^{-2}A_0(0)\big), \quad\quad \forall t\in[0,T]. 
	\end{equation}
	where the definitions of $A_0$, $A_1$, $A_2$, $A_3$ and $A_4$ can be found in \eqref{def:A0123} and \eqref{def:A45}. 
\end{lemma}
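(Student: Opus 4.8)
The plan is to obtain \eqref{eneA4} by assembling the five bounds \eqref{est:supH2}, \eqref{est:nabladivpt1psi}, \eqref{est:H2tan}, \eqref{est:pt2G1pt222psi2} and \eqref{est:nabla3theta}, reducing every lower-order contribution on their right-hand sides to initial data by means of \eqref{eneA0}, \eqref{eneA12} and \eqref{eneA3}. Recalling \eqref{def:A0123} and \eqref{def:A45}, the quantity $\ep^{2}A_4(t)$ is the sum of the time-suprema of $\|\nabla^2\varphi\|^2_{L^2}$, $\ep^{2}\|\nabla^2\bfpsi\|^2_{L^2}$ and $\|\nabla^2\theta\|^2_{L^2}$, the weighted integrals $\ep^{-2}\int_0^t\|\nabla\pt_{1}\varphi\|^2_{L^2}\md s$ and $\ep^{-2}\int_0^t\|\pt_{2}G_1\|^2_{L^2}\md s$, and the third-order integrals $\ep^{2}\int_0^t\|\bfpsi\|^2_{H^3}\md s$ and $\int_0^t\|\theta\|^2_{H^3}\md s$; we bound these in three groups.

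For the second-order time-suprema, \Cref{lem:supH2} controls $\ep^{2}\sup_{s\in[0,t]}\|\nabla^2\bfpsi(s)\|^2_{L^2}$ and $\sup_{s\in[0,t]}\|\nabla^2\theta(s)\|^2_{L^2}$ directly by the right-hand side of \eqref{eneA4}. For $\sup_{s\in[0,t]}\|\nabla^2\varphi(s)\|^2_{L^2}$ we split $\nabla^2\varphi$ into $\nabla\pt_{1}\varphi$ and $\pt_{2}^{2}\varphi$: the time-sup of $\|\nabla\pt_{1}\varphi\|^2_{L^2}$ is already contained in the proof of \Cref{lem:nabladivpt1psi} (it is part of the pointwise-in-$t$ bounds established there), while \eqref{ineq:pt2G1} and \eqref{ineq:pt22phi}, combined with the already-secured estimate $\|\varphi(t)\|^2_{H^1}\le\ep^{2}A_2(t)$, bound the time-sup of $\|\pt_{2}^{2}\varphi\|^2_{L^2}$.

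The dissipation and third-order integrals are then treated by using the remaining lemmas in the acyclic order \Cref{lem:nabladivpt1psi}, \Cref{lem:H2tan}, \Cref{lem:pt2G1pt222psi2}, with \Cref{lem:nabla3theta} independent. Indeed \eqref{est:nabladivpt1psi} bounds $\int_0^t\|\nabla\div\pt_{1}\bfpsi\|^2_{L^2}\md s$ in terms of initial data and $A_0,\dots,A_3$ only; substituting this into \eqref{est:H2tan} bounds $\int_0^t(\|\pt_{1}\bfpsi\|^2_{H^2}+\|\pt_{2}^{3}\psi^1\|^2_{L^2}+\ep^{-4}\|\nabla\pt_{1}\varphi\|^2_{L^2})\md s$, which supplies (after multiplication by $\ep^{2}$) the second group's term $\ep^{-2}\int_0^t\|\nabla\pt_{1}\varphi\|^2_{L^2}\md s$ and part of $\ep^{2}\int_0^t\|\bfpsi\|^2_{H^3}\md s$; substituting the now-known $\int_0^t\|\nabla^2\pt_{1}\bfpsi\|^2_{L^2}\md s$ into \eqref{est:pt2G1pt222psi2} bounds $\ep^{-2}\int_0^t\|\pt_{2}G_1\|^2_{L^2}\md s$, the remaining normal third derivative of $\psi^2$, and $\sup_{s\in[0,t]}\|\pt_{2}^{2}\varphi(s)\|^2_{L^2}$; and \eqref{est:nabla3theta} finally bounds $\int_0^t\|\nabla^{3}\theta\|^2_{L^2}\md s$. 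At each step every lower-order integral on the right-hand side (for instance $\int_0^t\|\bfpsi\|^2_{H^2}\md s$, $\int_0^t\|\pt_t\bfpsi\|^2_{H^1}\md s$, $\int_0^t\ep^{-4}\|G_1\|^2_{L^2}\md s$, $\int_0^t\ep^{-4}\|\pt_{1}\varphi\|^2_{L^2}\md s$, $\int_0^t\ep^{-2}\|\theta\|^2_{H^2}\md s$) is reabsorbed into $\ep^{-2}A_0+A_1+A_2+\ep^{2}A_3$ via \eqref{eneA0}, \eqref{eneA12}, \eqref{eneA3}.

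The main point to be careful about is the $\ep$-weight bookkeeping together with the acyclicity just used: the term $\ep^{2}\int_0^t\|\nabla^2\pt_{1}\bfpsi\|^2_{L^2}\md s$ appearing on the right-hand side of \eqref{est:pt2G1pt222psi2} is itself a piece of $\ep^{2}A_4(t)$, so it has to be controlled beforehand through \eqref{est:H2tan} and \eqref{est:nabladivpt1psi} rather than absorbed on the left; likewise the smallness of $\Nhat$ assumed here (namely $\Nhat\le N_2$) is exactly what permits the absorption of the nonlinear self-interaction terms already carried out inside \Cref{lem:nabladivpt1psi} and \Cref{lem:pt2G1pt222psi2}. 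Once this order is respected, adding the suitably $\ep$-weighted estimates \eqref{est:supH2}, \eqref{est:nabladivpt1psi}, \eqref{est:H2tan}, \eqref{est:pt2G1pt222psi2}, \eqref{est:nabla3theta} and invoking \Cref{basic}, \Cref{lemmaH1}, \Cref{lemmaH2temp} yields \eqref{eneA4}. This completes the proof.
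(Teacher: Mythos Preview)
Your proposal is correct and follows essentially the same approach as the paper: both obtain \eqref{eneA4} by combining the five estimates \eqref{est:supH2}, \eqref{est:nabladivpt1psi}, \eqref{est:H2tan}, \eqref{est:pt2G1pt222psi2}, \eqref{est:nabla3theta} and reducing all lower-order right-hand-side terms via \eqref{eneA0}, \eqref{eneA12}, \eqref{eneA3}. The paper states this in a single sentence, whereas you spell out the acyclic dependency order and the $\ep$-weight bookkeeping, which is a helpful elaboration rather than a different method.
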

\begin{proof}
	Based on \eqref{eneA0}, \eqref{eneA12} and \eqref{eneA3}, the estimate \eqref{eneA4} directly follows from \eqref{est:supH2},  \eqref{est:nabladivpt1psi}, \eqref{est:H2tan}, \eqref{est:pt2G1pt222psi2} and \eqref{est:nabla3theta}.
	
	The proof is completed.
\end{proof}

Finally, we state $\ep$-weighted $H^3$-type estimates on $(\varphi,\bfpsi,\theta)$ as follows.
\begin{lemma} \label{lemmaH3}
	Suppose that \eqref{chi} holds, and suppose that $\Re<\Re_0$ as in \Cref{lemmaH2}. Let $\hat{N}$ be defined as in \eqref{Nhat}. Then, there exists positive constants $\vep_3$ and $N_3$ depending only on $\Re$, $\Pr$, $\frac{\mup}{\mu}$ and $\gamma$, such that if $\ep\leq \vep_3$ and if $\Nhat\leq N_3$, then it holds that
	\begin{equation}
		\ep^{4}A_5(t)\leq C\big(\ep^{4}A_5(0)+\ep^2A_4(0)+\ep^2A_3(0)+A_2(0)+A_1(0)+\ep^{-2}A_0(0)\big), \quad\forall t\in[0,T], \label{eneA5}
	\end{equation}
	where the definitions of $A_0$, $A_1$, $A_2$, $A_3$, $A_4$ and $A_5$ can be found in \eqref{def:A0123} and \eqref{def:A45}. 
\end{lemma}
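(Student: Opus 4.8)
The plan is to run, at one order higher, the same three-tier scheme that produced \Cref{lemmaH2}: first $\ep$-weighted estimates for the highest time derivatives, then tangential ($x_1$) derivative estimates together with the weighted energy for the density quantity $G_1=\tfT\pt_2\varphi+\varphi\pt_2\tfT$, and finally elliptic bootstrapping (Stokes, Lam\'e, and the scalar elliptic theory) to recover the remaining normal derivatives and the full $H^4$-bounds on $\bfpsi$ and $\theta$. Throughout, the lower-order quantities $A_0,\dots,A_4$ are already controlled by \eqref{eneA0}, \eqref{eneA12}, \eqref{eneA3} and \eqref{eneA4}, so only the genuinely top-order terms need attention, and all the cubic/quartic nonlinear commutators are absorbed using \Cref{GN lemma}, \Cref{Poincareineq2} and the smallness of $\widehat N$ and $\ep$.

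\textbf{Time-derivative part.} I would differentiate $\eqref{eq}_2$ and $\eqref{eq}_3$ once in $t$ and test the resulting equations against $\ep^{2}\pt_t^2\bfpsi$ and $\pt_t^2\theta$ respectively, integrating by parts over $\Omega\times[0,t]$ exactly as in \Cref{lemmaH2temp}. The Lam\'e/heat dissipation then produces $\sup_{[0,t]}\big(\ep^{2}\|\nabla\pt_t\bfpsi\|_{L^2}^2+\|\nabla\pt_t\theta\|_{L^2}^2\big)$ on the left, while the leading terms $\int\rho|\pt_t^2\bfpsi|^2$, $\int\rho|\pt_t^2\theta|^2$ give (after multiplication by $\ep^4$) the $\int_0^t\big(\ep^4\|\pt_t^2\bfpsi\|_{L^2}^2+\ep^2\|\pt_t^2\theta\|_{L^2}^2\big)\md s$ contribution; the pressure term $\ep^{-2}\pt_t\nabla P$ is handled by the same integration-by-parts in space and time as in \eqref{pt4}, replacing $\pt_t^2 P=\tfT\pt_t^2\varphi+\ldots$ and using $\eqref{eq}_1$ to express $\pt_t^2\varphi$ through $\pt_t\div\bfpsi$ (hence through $\|\nabla\pt_t\bfpsi\|_{L^2}$) and lower-order data. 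Combined with $\eqref{eq}_2$ read as $\pt_t^2\bfpsi=\rho^{-1}\big(\mu\Delta\pt_t\bfpsi+\ldots\big)$ and \Cref{Poincareineq2}, this controls the $\pt_t$-dependent part of $\ep^4 A_5$ in terms of $\ep^4 A_5(0)$ and $A_0,\dots,A_4$.

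\textbf{Tangential and weighted-density part, then elliptic bootstrap.} Next I would apply $\pt_1^3$ to \eqref{eqf1}--\eqref{eqf2}, test against $\ep^{-2}\pt_1^3\varphi$ and $\pt_1^3\bfpsi$, and integrate by parts as in \Cref{lem:nabladivpt1psi} to get $\sup\|(\ep^{-1}\pt_1^3\varphi,\pt_1^3\bfpsi)\|_{L^2}^2+\int_0^t\big(\|\nabla\pt_1^3\bfpsi\|_{L^2}^2+\|\pt_1^3\div\bfpsi\|_{L^2}^2\big)\md s$; applying $\pt_1^2$ and $\pt_1\pt_2$ to the structural pair \eqref{eq:pt1pt2varphi}--\eqref{eq:pt1pt2psi} and pairing with $\pt_1^2\pt_2\varphi$, $\pt_1\pt_2^2\varphi$ then yields, as in \eqref{ineq:pt2divpsi2}, the $\sup$-control of these mixed derivatives and the dissipation $\ep^{-2}\int_0^t\|\nabla\pt_1 G_1\|_{L^2}^2\md s$. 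The remaining purely normal derivative $\pt_2^3\varphi$ is obtained by running the weighted energy argument of \Cref{lem:pt2G1pt222psi2} on $\pt_2^2G_1$: apply $\pt_2^2$ to \eqref{eq:G1} and \eqref{eqnormal2}, pair $\pt_2^2G_1$ against both, and extract $\sup\|\pt_2^2G_1\|_{L^2}^2+\ep^{-2}\int_0^t\|\pt_2^2G_1\|_{L^2}^2\md s+\ep^2\int_0^t\|\pt_2^4\psi^2\|_{L^2}^2\md s$, the transport term $\tbfu\cdot\nabla\pt_1\pt_2^2\varphi$ being tamed by the cancellations $\div(\tbfu\tfT)=0$, $\pt_1(\tu^1\pt_2\tfT/\tfT)=0$ and the variable-coefficient commutators by $\|\nabla^k\tfT\|_{L^\infty}\le\tC\ep^k$ from \eqref{ob-0}; then $\pt_2^3\varphi=\tfT^{-1}\pt_2^2G_1+(\text{lower order})$ closes the $\nabla^3\varphi$ bound. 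Finally \Cref{Stokes lemma} applied to $\pt_1\bfpsi$ and $\pt_1^2\bfpsi$ (together with \eqref{Stokes2}) upgrades everything to $\int_0^t\|\bfpsi\|_{H^4}^2\md s$; \Cref{Lame lemma} on the $t$-differentiated system \eqref{elliptic} gives $\sup\ep^2\|\nabla^3\bfpsi\|_{L^2}^2$; and the classical elliptic theory on \eqref{eq:ellptictheta} (differentiated once) gives $\sup\|\nabla^3\theta\|_{L^2}^2$ and $\int_0^t\ep^{-2}\|\theta\|_{H^4}^2\md s$, as in \Cref{lem:nabla3theta}. Summing all these sub-estimates, absorbing the coupling terms using the already-proved lower-order bounds, and choosing $\vep_3$, $N_3$ small, gives \eqref{eneA5}.

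\textbf{Main obstacle.} I expect the principal difficulty to be controlling the top normal density derivative $\pt_2^3\varphi$, which is inaccessible from $\eqref{eq}_1$ and must be routed through $\pt_2^2 G_1$: one must simultaneously manage the mutual appearance of third derivatives of $\varphi$ and of $\bfpsi$ on both sides of the paired energy identities, the variable-coefficient drift term coming from the non-constant background $\tbfu,\tfT$ in \eqref{def:Couette} (where only the exact algebraic cancellations save the argument), and the borderline nonlinear products such as $\pt_1^2\varphi\,\pt_t\pt_1\bfpsi$ or $\pt_1^2\theta\,\nabla\varphi$, which require \Cref{GN lemma} interpolation plus the smallness $\widehat N\le N_3$; the extra $\ep$-loss in the commutators (e.g.\ the $\ep^{-1}\widehat N\|\pt_2 G_1\|_{L^2}$ terms as in \eqref{ineq:nablaG1}) is precisely why the new threshold $\ep\le\vep_3$ is needed so that these can be absorbed into the dissipation.
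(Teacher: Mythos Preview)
Your proposal is correct and follows essentially the same approach as the paper: the paper's proof of \Cref{lemmaH3} is explicitly omitted, with the statement that it ``can be proved by using a similar argument as in the proof of \Cref{lemmaH2temp,lemmaH2},'' which is precisely the three-tier scheme (time-derivative estimates, tangential/weighted-density estimates, elliptic bootstrap) you have outlined in detail. Your identification of the main obstacle---routing $\pt_2^3\varphi$ through $\pt_2^2 G_1$ and the associated cancellations and smallness requirements---is also consistent with how the lower-order argument in \Cref{lem:pt2G1pt222psi2} is structured.
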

\begin{proof}
	This lemma can be proved by using a similar argument as in the proof of \Cref{lemmaH2temp,lemmaH2}. The details are omitted here.
\end{proof}

\section{Proof of the main theorems}
In this section we prove \Cref{main1,main2}. 

\subsection{Proof of \Cref{main1}}
To prove \Cref{main1}, we will follow standard arguments for problems with small data as in \cite{MN1980,MN1983}. Thus, we only give a sketch of proof as follows.

{\bf \hspace{-1em}Proof of \Cref{main1}.}
Let $\Re^\prime$, $\vep^\prime$, $N^\prime$ and $\hat{C}$ be the same as in \Cref{priori}. By \Cref{local}, there exist a time $T_*>0$ and a unique strong solution $(\varphi, \bfpsi,\theta)$ to the initial-boundary value problem \eqref{eq}--\eqref{eq5} on $(0,T_*)\times\Omega$ such that \eqref{local0} holds. 

Let $N(t)$ be defined by \eqref{def:N}, and suppose that 
\begin{equation}
	N(0)\leq \max\{\frac{1}{4},\frac{1}{16}\hat{C}^{-1}\}N^\prime.  \label{prove2}
\end{equation}
Then, due to \eqref{local0} there exists a time $t_1\in (0,T_*]$ such that
\begin{equation}
	\sup_{t\in(0,t_1)}N(t)\leq 2N(0) \leq \frac{1}{2}N^\prime. \label{prove1}
\end{equation}
Thus, it follows from \eqref{prove1} and \Cref{priori} that 
\begin{equation}
	\sup_{t\in(0,t_1)}N(t)\leq \hat{C}N(0),
\end{equation}
which, together with \eqref{prove2}, leads to 
\begin{equation}
	\sup_{t\in(0,t_1)}N(t)\leq \frac{1}{4}N^\prime.
\end{equation}

Next, we can solve the problem \eqref{eq}--\eqref{eq5} in $t\geq t_1$ with initial data $(\varphi(t_1),\bfpsi(t_1),\theta(t_1))$ again, and by uniqueness we can extend the solution $(\varphi,\bfpsi,\theta)$ to $[0,2t_1]$. Therefore, we can continue the above argument and the same process for $0\leq t\leq nt_1$, $n=2,3,4,\cdots$ and finally obtain a global unique strong solution $(\varphi,\bfpsi,\theta)$ satisfying \eqref{priori} for any $t>0$. Let $(\rho,u,\fT)=(\trho+\varphi,\tbfu+\bfpsi,\tfT+\theta)$. It can be seen that $(\rho,u,\fT)$ is indeed a global unique strong solution to the original problem \eqref{eq:2DCNS}--\eqref{initialcon} such that \eqref{regularity} and \eqref{uniform} hold. 

Finally, the large time behavior \eqref{largetime} can be shown by using the Sobolev embedding theorem and the fact from \eqref{priori} that $(\bfpsi,\theta)\in H^1\big([0,\infty);H^2(\Omega)\big)$.

The proof is completed.                                   \,\,\,$\hspace{25em}\Box$

\subsection{Proof of \Cref{main2}}
Now, we are ready to prove \Cref{main2}.

{\bf \hspace{-1em}Proof of \Cref{main2}.}
The condition \eqref{same} implies that \eqref{chi} holds. Therefore, \Cref{main1} guarantees the global existence of strong solution $(\rho^\ep,u^\ep,\fT^\ep)$ to \eqref{eq:2DCNS}--\eqref{initialcon}, which satisfies \eqref{regularity} and \eqref{uniform}, i.e., it holds that
\begin{equation}
	\begin{aligned}
		\|\rho^\ep-\trho\|_{L^2}=O(\ep^2),
		\quad \|\bfu^\ep-\tbfu\|_{L^2}=O(\ep),
		\quad \|\fT^\ep-\tfT\|_{L^2}=O(\ep^2).
	\end{aligned}  \label{converge1}
\end{equation}
Moreover, it is observed from \eqref{def:Couette} and \eqref{same} that 
\begin{equation}
	\begin{aligned}
		\|\trho-1\|_{L^2}=O(\ep^2),
		\quad \|\tfT-1\|_{L^2}=O(\ep^2).\label{converge2}
	\end{aligned} 
\end{equation}
Combining \eqref{converge1} and \eqref{converge2} gives \eqref{converge0}.

The proof is completed.
\,\,\,$\hspace{25em}\Box$\\

%\begin{appendices}
%appendices.
%\end{appendices}

\noindent {\bf Acknowledgments:}
The authors are supported by the NSFC (Grants 12131007).

%\bibliographystyle{plain}
%\bibliographystyle{siam}
%\bibliography{PDEref}

\end{document}